\newcommand{\comment}[1]{}
\newcommand{\ul}[1]{\underline{#1}}
\newcommand{\ol}[1]{\overline{#1}}
\renewcommand{\mod}{\operatorname{mod}\nolimits}
\newcommand{\add}{\operatorname{add}\nolimits}
\newcommand{\rad}{\operatorname{rad}\nolimits}
\newcommand{\soc}{\operatorname{soc}\nolimits}
\newcommand{\ind}{\operatorname{ind}\nolimits}
\DeclareMathOperator{\CM}{CM}
\newcommand{\op}{{\operatorname{op}\nolimits}}
\newcommand{\coh}{\operatorname{coh}\nolimits}
\newcommand{\Fac}{\operatorname{Fac}\nolimits}
\newcommand{\Sub}{\operatorname{Sub}\nolimits}
\newcommand{\End}{\operatorname{End}\nolimits}
\newcommand{\rep}{\operatorname{rep}\nolimits}
\newcommand{\Ext}{\operatorname{Ext}\nolimits}
\newcommand{\Hom}{\operatorname{Hom}\nolimits}
\newcommand{\CC}{\mathcal{C}_{H}}
\newcommand{\Z}{\mathbb{Z}}
\DeclareMathOperator{\uCM}{\underline{CM}} \DeclareMathOperator{\uHom}{\underline{Hom}}
\newcommand{\DBH}{{\bf D}^b(H)}
\newcommand{\DIV}{\mathbf{D}_4}
\newcommand{\DV}{\mathbf{D}_5}
\newcommand{\modL}{\textrm{mod} \Lambda}
\newcommand{\stmodL}{\underline{\textrm{mod}} \Lambda}
\newcommand{\modH}{\textrm{mod} H}
\newcommand{\stSubG}{\underline{\Sub}\Gamma}
\newcommand{\stSubL}{\underline{\Sub}\Lambda}
\newcommand{\modG}{\textrm{mod} \Gamma}
\newcommand{\secI}{Section~1 }
\newcommand{\secII}{Section~2 }
\newcommand{\secIII}{Section~3 }
\newcommand{\secIV}{Section~4 }
\newcommand{\secV}{Section~5 }
\newcommand{\citeRV}{[RV]}
\newcommand{\citeAR}{[AR,3.2,2.1]}
\newcommand{\citeI}{[I]}
\renewcommand{\dim}{\operatorname{dim}\nolimits}
\newcommand{\Id}{\operatorname{id}\nolimits}
\newcommand{\la}{\Lambda}
\newcommand{\xto}{\xrightarrow}
\newtheorem{theorem}{Theorem}[section]
\newtheorem{corollary}[theorem]{Corollary}
\newtheorem{lemma}[theorem]{Lemma}
\newtheorem{proposition}[theorem]{Proposition}
\newcommand{\cc}{{\mathcal C}}
\newcommand{\cF}{{\mathcal F}}
\newcommand{\ci}{{\mathcal I}}
\newcommand{\cp}{{\mathcal P}}
\newcommand{\ct}{{\mathcal T}}
\newcommand{\cx}{{\mathcal X}}
\newcommand{\cy}{{\mathcal Y}}
\newcommand{\C}{\mathcal{C}}
\newcommand{\CH}{{\C_H}}
\begin{document}
\title{Tilting theory and cluster algebras}
\author{Idun Reiten}
\maketitle
\section*{Introduction}
The purpose of this chapter is to give an introduction to the theory of cluster
categories and cluster-tilted algebras, with some background on the theory of cluster
algebras, which motivated these topics. We will also discuss some of the interplay
between cluster algebras on one side and cluster categories/cluster-tilted algebras on
the other, as well as feedback from the latter theory to cluster algebras.

The theory of cluster algebras was initiated by Fomin and Zelevinsky \cite{fz1}, and
further developed by them in a series of papers, including \cite{fz2}, some involving
other coauthors. This theory has in recent years had a large impact on the representation
theory of algebras. The first connection with quiver representations was given in
\cite{mrz}. Then the cluster categories were introduced in \cite{bmrrt} in order to model
some of the ingredients in the definition of a cluster algebra. For this purpose a
tilting theory was developed in the cluster category. (See \cite{ccs1} for the
independent construction of a category in the $A_n$ case which turned out to be
equivalent to the cluster category \cite{ccs2}). This led to the theory of cluster-tilted
algebras initiated in \cite{bmr1} and further developed in many papers by various
authors.

The theory of cluster-tilted algebras (and cluster categories) is closely connected with
ordinary tilting theory. Much of the inspiration comes from usual tilting theory. Features missing in tilting theory when trying to model clusters from the theory of
cluster algebras made it necessary to replace the category $\mod H$ of finitely generated
$H$-modules for a finite dimensional hereditary algebra $H$ with a related category which
is now known as the cluster category. On the other hand, the theory of cluster-tilted
algebras provides a new point of view on the old tilting theory.

The Bernstein-Gelfand-Ponomarev (BGP) reflection functors were an important source of
inspiration for the development of tilting theory, which provided a major generalization
of the work in \cite{bgp}. The Fomin-Zelevinsky (FZ) mutation, which is an essential
ingredient in the definition of cluster algebras, gives a generalization of these
reflections in another direction.

We start with introducing cluster algebras in the first section. We illustrate the
essential concepts with an example, which will be used throughout the chapter. We give
main results and conjectures about cluster algebras which are relevant for our further
discussion. In \secII  we introduce and investigate cluster categories, followed by
cluster-tilted algebras in \secIII. In \secIV we discuss the interplay between cluster
algebras and cluster categories/cluster-tilted algebras, and we also give applications to
cluster algebras. The cluster categories are a special case of the more general class of
$\Hom$-finite triangulated Calabi-Yau categories of dimension 2 (2-CY categories), and
much of the theory generalizes to this setting. Denote by $\mod\la$ the  category of finitely generated (left) modules over a finite dimensional algebra $\la$. An important case is the stable category
$\ul{\mod}\la$, where $\la$ is the preprojective algebra  of a Dynkin quiver. The related category $\mod\la$ has been studied extensively by Geiss-Leclerc-Schr\"oer, who
extended results from cluster categories to this setting, and gave applications to
cluster algebras \cite{gls1}\cite{gls2}\cite{gls3}. See also \cite{birsc}.  We treat this in \secV.

We suppose that the reader is familiar  with the basic theory of quiver representations
and almost split sequences (see \cite{rin1},\cite{ars},\cite{ass} and other chapters in
this volume). We also presuppose some background  from ordinary tilting theory (see
\cite{rin1},\cite{ass},\cite{ahk}), but here we shall nevertheless recall relevant
definitions and results when they are needed. We generally do not give proofs, but
sometimes we include some indication of proofs in order to stress some ideas. Instead we
give examples to illustrate the theory, and we try to give some motivation for the work.
We should also emphasize that the selection of the material reflects our personal
interests.

 For each section we add some historical notes with references at the end, rather
than giving too many references as we go along. We also refer to the surveys
\cite{bm1}\cite{rin2}\cite{ke2}\cite{ke3}. We assume throughout that we work over a field $k$ which is         
algebraically closed.
    
This chapter is based on the series of lectures I gave in Trieste in January 2006. I
would like to thank I. Muchtadi Alumsayh and G. Bobinski for providing me with a copy of
their notes, and Aslak Bakke Buan, Osamu Iyama, Bernhard Keller and David Smith for their very helpful
comments.

\section{Cluster algebras}
In this section we introduce a special class of cluster algebras and illustrate the
underlying concepts through a concrete example. We also give a selection of main results
and conjectures of Fomin-Zelevinsky which provide an appropriate background for our
further discussion.

\bigskip
\subsection{Fomin-Zelevinsky mutation}
Let $Q$ be a finite connected quiver with vertices $1,2,\cdots, n$. We say that $Q$ is a
\emph{cluster quiver} if it has no loops $\xymatrix@C0.2cm@R0.2cm{ \bullet\ar@(ur,dr)[]&
}$
 and no 2-cycles
$\xymatrix@C0.2cm@R0.2cm{
   \bullet \ar@/^/[rr]&&\bullet\ar@/^/[ll]&
}$ For each vertex $i=1,2,\cdots,n$, we define a new quiver $\mu_i(Q)$ obtained by
\emph{mutating} $Q$, and we call the process Fomin-Zelevinsky mutation, or FZ-mutation
for short.

The quiver $\mu_i(Q)$ is obtained from $Q$ as follows.

\noindent (i) Reverse all arrows starting or ending at $i$.

\noindent (ii) If in $Q$ we have $n>0$ arrows from $t$ to $i$ and $m>0$ arrows from $i$
to $s$ and $r$ arrows from $t$ to $s$ (interpreted as $-r$ arrows from $s$ to $t$ if
$r<0$), then in the new quiver $\mu_i(Q)$ we have $nm-r$ arrows from $s$ to $t$
(interpreted as $r-nm$ arrows from $s$ to $t$ if $nm-r<0$).

An important and easily verified property of the mutation is the following.
\begin{proposition}
For a cluster quiver $Q$, we have $\mu_i(\mu_i(Q))=Q$  for each vertex $i$ of $Q$.
\end{proposition}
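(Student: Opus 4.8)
The plan is to prove $\mu_i(\mu_i(Q)) = Q$ by tracking what happens to each piece of the quiver data under two successive mutations at the same vertex $i$, and checking that everything returns to its original state. I would organize the argument around the two types of data the mutation rule affects: the arrows incident to $i$ (handled by step (i) of the definition) and the arrows between pairs of vertices $s,t$ both distinct from $i$ (handled by step (ii)).

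First I would handle the arrows incident to $i$. Step (i) of the mutation reverses every arrow with an endpoint at $i$. Applying this operation twice reverses each such arrow and then reverses it back, so after two mutations at $i$ the arrows incident to $i$ are restored exactly. The only subtlety worth remarking is that step (ii) never touches arrows incident to $i$ (it only changes the count of arrows between the third vertices $s$ and $t$), so the two operations do not interfere on these arrows; I would note this explicitly to keep the two parts of the argument independent.

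The main work, and the step I expect to be the principal obstacle, is verifying that the arrow count between a fixed pair of vertices $s,t$ (both $\neq i$) is unchanged after two mutations. Here I must be careful with the sign conventions: I adopt the convention of the definition, writing $b_{ts}$ for the signed number of arrows $t \to s$ (so $b_{st} = -b_{ts}$), and I let $a = b_{ti}$ and $c = b_{is}$ record the signed arrows through $i$. The key point is that after mutating once at $i$, step (i) reverses the arrows through $i$, turning the relevant paths $t \to i \to s$ into paths $s \to i \to t$; so in the mutated quiver the roles of the incoming and outgoing counts at $i$ are interchanged. Writing out the new arrow count $b_{ts}' = b_{ts} + [\text{correction from paths through } i]$ from the rule in (ii), and then applying the rule a second time with the reversed incidence data, I would show the two corrections cancel so that $b_{ts}'' = b_{ts}$.

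Concretely, the arrow count between $s$ and $t$ changes only by a term built from the products of the arrow counts $t \to i$ and $i \to s$, and after the first mutation those two counts have their directions reversed. When the second mutation computes its correction term, it uses these reversed counts, which reproduces the same product with the opposite sign, so the net change over the two mutations is zero. I would verify this in the standard sign-respecting form (this is exactly the involutivity of Fomin--Zelevinsky matrix mutation $\mu_i(\mu_i(B)) = B$ for the skew-symmetric exchange matrix $B = (b_{ts})$ encoding $Q$), and check that the no-loops, no-2-cycle hypothesis guarantees $B$ is genuinely skew-symmetric so that the quiver is faithfully recovered from the matrix. The only real care needed is bookkeeping of the case distinctions hidden in the bracketed sign conventions of (ii); organizing the computation at the level of the skew-symmetric matrix $B$ makes these signs automatic and removes the case analysis, which is why I would phrase the core of the proof matrix-theoretically.
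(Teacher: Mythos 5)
Your proposal is correct. The paper gives no proof at all for this proposition --- it is stated as "easily verified" --- so there is no argument to compare against; your verification is exactly the standard one that the paper leaves to the reader. Both halves of your argument are sound: arrows incident to $i$ are reversed twice, and for a pair $s,t \neq i$ the correction term $\tfrac{1}{2}\bigl(|b_{ti}|b_{is} + b_{ti}|b_{is}|\bigr)$ is negated when both signed counts through $i$ change sign, so the two corrections cancel; passing to the skew-symmetric exchange matrix is indeed the cleanest way to make the sign bookkeeping automatic, and your remark that the no-loop, no-2-cycle hypothesis is what makes the quiver-matrix dictionary faithful is the one point worth stating explicitly.
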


We illustrate with some examples.

\noindent {\bf (a)} Let $Q$ be the quiver

$$\xymatrix{
  1\ar[dr]&&&&\\
  2\ar@<0.5ex>[r]\ar@<-0.5ex>[r]& 4\ar@<0.5ex>[r]\ar@<-0.5ex>[r]&
5\ar[r]\ar@/^/@<0.5ex>[dll]\ar@/^/@<-0.5ex>[dll]\ar@/^/[dll]\ar@/_/[ull]&6\ar[r]&7\\
  3\ar[ur]&&&&
}$$



Then $\mu_4(Q)$ is the quiver

$$\xymatrix{
1\ar@/^2pc/[drr]&&&&\\
2\ar@/^2pc/@<0.3ex>[rr]\ar@/^2pc/@<-0.3ex>[rr]\ar@/^2pc/@<0.9ex>[rr]\ar@/^2pc/@<-0.9ex>[rr]&
4'\ar@<0.5ex>[l]\ar@<-0.5ex>[l]\ar[dl]\ar[ul]&
5\ar@<0.5ex>[l]\ar@<-0.5ex>[l]\ar@/^/[dll]\ar[r]& 6\ar[r]& 7\\
3&&&& }$$ 
For example there are $2\cdot 2=4$ arrows from vertex $2$ to vertex $5$ in $\mu_4(Q)$, and
considering the paths between $3$ and $5$, we have $1\cdot 2-3=-1$, so that there is one
arrow from $5$ to $3$.

\noindent {\bf (b)} Let $Q$ be the quiver $1\to 2\to 3$. Then  $Q'=\mu_3(Q) $ is the
quiver $1\to 2\leftarrow 3$ obtained by reversing the arrows involving 3. Since in this case 3 is a sink in the quiver, there is no path of length two with middle vertex
3. The same thing happens when we mutate at a vertex which is a source.

\bigskip
Hence we see that when we mutate at a sink or a source, the procedure coincides with the
BGP-reflections.

When we have a BGP-reflection, like in the above example, there is an equivalence between
the subcategories of the categories of finite dimensional representations $\rep Q$ and
$\rep Q'$ obtained by ``removing'' in each case the simple representation at the vertex 3
\cite{bgp}.

\bigskip
\subsection{Definition of cluster algebras}

Let $Q$ be a cluster quiver with vertices $1,2,\cdots, \\n$ and let
$F=\mathbb{Q}(x_1,\cdots,x_n)$ be the function field in $n$ indeterminates over
$\mathbb{Q}$. Consider the pair $(\ul{x},Q)$, where $\ul{x}=\{x_1,\cdots,x_n\}$. The
cluster algebra $C(\ul{x},Q)$ will be defined to be a subring of $F$. The main
ingredients involved in the definition are the following concepts: \emph{cluster},
\emph{cluster variable}, \emph{seed, mutation of seeds}.

The pair $(\ul{x},Q)$ consisting of a free generating set $\ul{x}$ for $F$ over the
rational numbers $\mathbb{Q}$, together with a quiver with $n$ vertices, is called a
\emph{seed}. Here the $n$ elements in $\ul x$ are viewed as labeling the vertices of the quiver $Q$. We consider the elements in $\ul x$ as an ordered set, with corresponding ordering of the vertices of the quiver, written from left to right. If $(\ul x', Q')$ is obtained from $(\ul x, Q)$ by simultaneous rearrangement of the elements in $\ul x$ and the vertices in $Q$, then $(\ul x', Q')$ is a seed \textit{equivalent} to $(\ul x, Q)$ and we will identify $(\ul x, Q)$ with $(\ul x', Q')$. 

For $i=1,\cdots,n$ we define a mutation $\mu_i$ taking the seed $(\ul{x},Q)$
to a new seed $(\ul{x}',Q')$, where $Q'=\mu_i(Q)$ as discussed in 1.1, and $\ul{x}'$ is
obtained from $\ul{x}$ by replacing $x_i$  by a new element $x_i'$ in $F$. Here $x_i'$ is
defined by $x_i x_i'=m_1+m_2$,  where $m_1$ is a monic monomial in the variables
$x_1,\cdots,x_n$, where the power of  $x_j$ is the number of arrows from $j$ to $i$, and
$m_2$ is the monic monomial where the power of $x_j$  is the number of arrows from $i$ to
$j$. If there is no arrow from $j$ to $i$, then $m_1=1$, and if there is no arrow from
$i$ to $j$, then $m_2=1$. Note that while in the new seed the quiver $Q'$ only depends
on the quiver $Q$, the set $\ul{x}' $ depends on both $\ul{x}$ and $Q$. We have
$\mu_i^2(\ul{x},Q)=(\ul{x},Q)$.

We perform this operation for all $i=1,\cdots,n$, then perform it on the new seeds etc.
(or we get back to one of the seeds equivalent to one already computed). This gives rise to a graph which may be finite or infinite. The $n$-element subsets
$\ul{x},\ul{x}',\ul{x}'',\cdots$ occurring  are by definition the \emph{clusters}, the
elements in the clusters are the \emph{cluster variables}, and the \emph{seeds} are all
pairs $(\ul{x}',Q')$ occurring. The corresponding \emph{cluster algebra} $C(\ul{x},Q)$,
which as an algebra only depends on $Q$, is the subring of $F$ generated by the cluster
variables. We also write $C(\ul{x},Q)=C(Q)$.

When we are given the cluster algebra only, the information on the clusters, cluster
variables and seeds may be lost, and also the rule for mutation of seeds. We want to keep
all this information in mind,  in addition  to the cluster algebra itself, which is
determined by this information.

We remark that the more general definition of cluster algebras includes the possibility
of having so-called coefficients, and it also allows valued quivers. In the language of
\cite{fz1} the last generalization means to consider skew symmetrizable matrices rather
than just skew symmetric ones. The correspondence between quivers and matrices is
illustrated by the following example: The quiver $Q\colon 1\to 2\to 3$ corresponds to the
matrix $\left(
\begin{smallmatrix}
0&1&0\\
-1&0&1\\
0&-1&0
\end{smallmatrix}
\right)$. The arrow $1\to 2$ gives rise to the entries $a_{12}=1$ and $a_{21}=-1$ and the arrow $2\to 3$ to the entries $a_{23}=1$ and $a_{32}=-1$. Since there are no more arrows, the remaining entries are zero. 

We say that two quivers are \textit{mutation equivalent} if one can be obtained from the other using a finite number of mutations. If $Q'$ is a quiver which is mutation equivalent to $Q$, then the cluster algebras $C(Q') $ and $C(Q)$ are isomorphic.

\subsection{An example}
Let $Q_1$ be  the quiver $\xymatrix{\bullet\ar[r]&\bullet\ar[r]&\bullet}$. The mutation class of $Q_1$ has in addition the quivers  $Q_2=\xymatrix{\bullet&\bullet\ar[l]\ar[r]&\bullet}$,
$Q_3=\xymatrix{\bullet\ar@/_/[rr]&\bullet\ar[l]&\bullet\ar[l]}$ and 
$Q_4=\xymatrix{\bullet\ar[r]&\bullet&\bullet\ar[l]}$. Let $\ul{x}=\{x_1,x_2,x_3\}$, where $x_1,x_2,x_3$
are indeterminates, and $F=\mathbb{Q}(x_1,x_2,x_3)$. By performing the three mutations of the seed $(\ul x, Q_1)$ we get $\mu_1(\ul{x},Q_1)=(\ul{x}',Q_2)$, where $\ul{x}'=\{x_1',x_2,x_3\}$, with $x_1x_1'=1+x_2$, so that $x_1'=\frac{1+x_2}{x_1}$, $\mu_2(\ul{x},Q_1)=(\ul{x}'',Q_3)$, where $\ul{x}''=\{x_1,x_2'',x_3\}$, where $x_2x_2''=x_1+x_3$, so that
$x_2''=\frac{x_1+x_3}{x_2}$. 

Continuing this process,we get the graph shown in Figure \ref{clusterExFig}, called the \textit{cluster graph}.  The clusters are: \vspace{2mm}$\{ x_1,x_2,x_3 \}$, $\{
\frac{1+x_2}{x_1},x_2,x_3 \}$, $\{ \ x_1, \frac{x_1 + x_3}{x_2},x_3 \}$, $\{ x_1,x_2,
\frac{1+x_2}{x_3}\}$, $\{ \frac{1+x_2}{x_1}, \frac{x_1 +
  (1+x_2)x_3}{x_1 x_2},x_3 \}$,\vspace{2mm}
$\{\frac{1+x_2}{x_1},x_2,\frac{1+x_2}{x_3} \}$, $\{\frac{x_1+(1+x_2)x_3}{x_1
x_2},\frac{x_1 + x_3}{x_2},x_3 \}$, $\{x_1,\frac{x_1+x_3}{x_2},\frac{(1+x_2)x_1+x_3}{x_2
x_3} \}$, $\{x_1,\frac{(1+x_2)x_1 + x_3}{x_2 x_3},\frac{1+x_2}{x_3} \}$,\vspace{2mm}
$\{\frac{1+x_2}{x_1},\frac{x_1+(1+x_2)x_3}{x_1 x_2},\frac{(1+x_2)x_1 +(1+x_2)x_3}{x_1 x_2
x_3}\}$, $\{\frac{1+x_2}{x_1},\frac{(1+x_2)x_1 +(1+x_2)x_3}{x_1 x_2
x_3},\frac{1+x_2}{x_3} \}$, $\{\frac{x_1+(1+x_2)x_3}{x_1
x_2},\frac{x_1+x_3}{x_2},\frac{(1+x_2)x_1
+(1+x_2)x_3}{x_1 x_2 x_3} \}$\vspace{2mm},\\
$\{\frac{(1+x_2)x_1 +(1+x_2)x_3}{x_1 x_2
x_3},\frac{x_1+x_3}{x_2},\frac{(1+x_2)x_1+x_3}{x_2 x_3} \}$, $\{\frac{(1+x_2)x_1
+(1+x_2)x_3}{x_1 x_2 x_3},\frac{(1+x_2)x_1+x_3}{x_2 x_3},\frac{1+x_2}{x_3} \}$, and the
\vspace{2mm}cluster variables are: $x_1$, $x_2$, $x_3$, $\frac{1+x_2}{x_1}$,
$\frac{x_1+x_3}{x_2}$, $\frac{1+x_2}{x_3}$, $\frac{x_1+(1+x_2)x_3}{x_1 x_2}$,
$\frac{(1+x_2)x_1 + x_3}{x_2 x_3}$, $\frac{(1+x_2)x_1 + (1+x_2)x_3}{x_1 x_2 x_3}$.
\tiny
\begin{sidewaysfigure}[p]
\[
\xymatrix@C-5.5pc@!C{
&&&&&& \\
&&&&&& \\
&&&&&& \\
&&&&&& \\
&&&&&& \\
&&&&&& \\
&&&&&& \\
&&&&&& \\
&&&(\{x_1,x_2,x_3\},Q_{1})\ar@{-}[dll]^{\mu_1}\ar@{-}[dd]^{\mu_2}\ar@{-}[drr]^{\mu_3}&&&\\
&(\{\frac{1+x_2}{x_1},x_2,x_3\},Q_2)\ar@{-}[ldd]^{\mu_2}\ar@{--}[rrddd]&&
&&
(\{x_1,x_2,\frac{1+x_2}{x_3}
\},Q_4)\ar@{-}[ddr]^{\mu_2}\ar@{--}[llddd]& \\
&&&(\{x_1,\frac{x_1+x_3}{x_2},x_3 \},Q_3)\ar@{--}[ld]\ar@{--}[rd]&&&\\
(\{\frac{1+x_2}{x_1},\frac{x_1 + (1+x_2)x_3}{x_1
  x_2},x_3\},Q_4)\ar@{-}[rddd]^{\mu_3}\ar@{--}[rr]&&(\{x_3, \frac{x_1 + (1+x_2)x_3}{x_1
x_2},\frac{x_1 +
  x_3}{x_2},\},Q_1)\ar@{-}[dddd]^{\mu_1}&&(\{\frac{x_1 +
x_3}{x_2},\frac{(1+x_2)x_1 +
  x_3}{x_2
x_3}, x_1\},Q_1)\ar@{-}[dddd]^{\mu_3}\ar@{--}[rr]&&(\{x_1,\frac{(1+x_2)x_1+x_3}{x_2
  x_3},\frac{1+x_2}{x_3}\},Q_2)\ar@{-}[lddd]^{\mu_1}\\
  &&&(\{\frac{1+x_2}{x_3},x_2,\frac{1+x_2}{x_1}\},Q_1)\ar@{--}[d]&&&\\
  &&& (\{\frac{1+x_2}{x_1},\frac{1+x_2}{x_3},y\},Q_3)\ar@{-}[lld]^{\mu_2}\ar@{--}[rrd]&&&\\
&(\{\frac{1+x_2}{x_1},\frac{x_1 + (1+x_2)x_3}{x_1
  x_2},y\},Q_1)\ar@{--}[dr]&&&&(\{y,\frac{(1+x_2)x_1+x_3}{x_2
x_3},\frac{1+x_2}{x_3}\},Q_1)\ar@{--}[ld]&\\
&&(\{y, \frac{x_1+(1+x_2)x_3}{x_1
x_2},\frac{x_1+x_3}{x_2}\},Q_2)\ar@{--}[rr]&&(\{\frac{x_1 +
x_3}{x_2},\frac{(1+x_2)x_1 + x_3}{x_2 x_3},y\},Q_4)&& } \]
\caption{\label{clusterExFig}Example.}
\end{sidewaysfigure}

\normalsize

Note that for example the seed $(\{x_1,x_2,x_3\}, Q_1)$ is identified with the seed $(\{x_2,x_1,x_3\},\\ \xymatrix{\bullet\ar@/_/[rr]&\bullet\ar[l]&\bullet})$. In Figure 1 we always choose a seed in the equivalence class containing one of the quivers $Q_1,Q_2,Q_3,Q_4$. This means that we sometimes must replace a seed with one which is different from the one directly obtained by mutation. We indicate this by using dotted edges instead of solid edges. For simplicity we have written $y=\frac{(1+x_2)x_1+(1+x_2)x_3}{x_1x_2x_3}$.


\bigskip
\subsection{Some main results}
There are many interesting results in the theory of cluster algebras. Here we
give some of the main theorems and open problems which are of special interest for this chapter. We mainly deal with the acyclic case. 
\vspace{0.3cm} 

\noindent \textbf{(a) Finiteness conditions.} The cluster algebra
$C(Q)=C(\ul{x},Q)$ is said to be of \emph{finite type} if there is only a finite number
of cluster variables. This is equivalent to saying that there is only a finite number of
clusters, and also to the fact that there is only a finite number of seeds. But as we
shall see later, it is not equivalent to having only a finite number of quivers. There is
the following description of finite type.
\begin{theorem}
  Let $Q$ be a cluster quiver. Then the cluster algebra $C(Q)$ is of finite type if
and only if  $Q$ is mutation equivalent to a Dynkin quiver.
\end{theorem}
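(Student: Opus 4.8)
The plan is to prove the two implications separately, after reducing to a statement about the mutation class of $Q$ alone. As already noted, mutation-equivalent quivers give isomorphic cluster algebras, and in fact finite type is a property of the mutation class: the set of seeds produced from $(\ul x,Q)$ is just the connected component of $(\ul x,Q)$ in the mutation graph, and starting instead from any mutation-equivalent quiver produces the same component. Hence the number of seeds, and with it the finite type condition, depends only on the mutation class of $Q$. The key bookkeeping device I would use is the \emph{Cartan counterpart} $A=A(Q)$ of a quiver: the integer matrix with $a_{ii}=2$ and $a_{ij}=-(\text{number of arrows between }i\text{ and }j)$ for $i\neq j$. A Dynkin quiver is precisely one whose Cartan counterpart is a (symmetric) Cartan matrix of finite type, so by the Cartan--Killing classification the goal becomes to show that $C(Q)$ is of finite type iff some quiver in the mutation class of $Q$ has positive definite Cartan counterpart.

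For the direction ``Dynkin $\Rightarrow$ finite'' I would fix a Dynkin quiver $Q$ with root system $\Phi$ and set up a bijection between the cluster variables and the \emph{almost positive roots} $\Phi_{\ge -1}=\Phi_{>0}\cup\{-\alpha_1,\dots,-\alpha_n\}$. Concretely, one tracks the denominator vectors of the cluster variables: starting from the initial cluster, whose variables correspond to $-\alpha_1,\dots,-\alpha_n$, each mutation is matched with a piecewise-linear ``tropical'' action on roots, and one checks by induction that every cluster variable has as denominator a positive root and that distinct variables yield distinct roots. Since $\Phi_{\ge -1}$ is finite, there are finitely many cluster variables and hence finitely many clusters. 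In the spirit of this chapter, an alternative is to defer this to the cluster category introduced in \secII: for $Q$ Dynkin the associated hereditary algebra has finite representation type by Gabriel's theorem, the cluster category then has only finitely many indecomposables, and the clusters correspond to its cluster-tilting objects, of which there are then only finitely many.

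For the converse ``finite $\Rightarrow$ Dynkin'' I would argue contrapositively and build up from rank two. A rank-two quiver is determined by its number $b$ of arrows, and the cluster variables satisfy the recurrence $x_{m-1}x_{m+1}=x_m^{\,b}+1$; an elementary computation shows this sequence is periodic exactly when $b\le 1$, so a connected rank-two cluster algebra is of finite type iff $b=1$, i.e.\ iff the quiver is $\AIII$-type $A_2$. To pass to higher rank I would introduce the notion of a \emph{minimal infinite type} quiver---one that is of infinite type while every proper full subquiver is of finite type---and prove that finite type is inherited by full subquivers of quivers reachable by mutation. The decisive step is then to classify the minimal infinite type diagrams: they turn out to be exactly the extended Dynkin (affine) diagrams together with a short explicit exceptional list, mirroring the classical fact that the symmetric integer matrix $A(Q)$ fails to be positive definite precisely when its underlying graph contains an extended Dynkin subgraph. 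A quiver avoiding all of these as full subdiagrams throughout its mutation class must therefore be Dynkin.

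The main obstacle is this last direction, and within it two points carry the weight. First, one must make precise and prove the inheritance of finite type under restriction to full subquivers; this is subtle because the mutation rule~(ii) couples all three vertices of a triangle through the correction term $-r$, so a mutation of a subquiver cannot be performed ``in isolation'' inside the ambient quiver, and one needs the diagram calculus of \cite{fz2} to control how the edge weights propagate under mutation. Second, the classification of minimal infinite diagrams is a genuine case analysis, tied to the boundary between positive definite and positive semidefinite quadratic forms; establishing that the exceptional list is complete, and that every non-Dynkin quiver really does acquire such a forbidden full subdiagram somewhere in its mutation class, is where the real work lies.
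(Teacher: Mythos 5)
You should know at the outset that the paper contains no proof of this theorem: it is quoted as a result of Fomin--Zelevinsky, and the Notes at the end of Section 1 simply defer to \cite{fz2}. So your proposal can only be compared with the original argument, and there it fares well: the reduction to a statement about the mutation class, the Cartan counterpart, the rank-two recurrence $x_{m-1}x_{m+1}=x_m^b+1$ (periodic precisely for $b\le 1$ in the simply-laced case), the inheritance of finite type by full subquivers, the classification of minimal infinite diagrams, and the parametrization of cluster variables by almost positive roots for the Dynkin direction---this is exactly the architecture of \cite{fz2}. Your categorical alternative for ``Dynkin $\Rightarrow$ finite'' also matches the machinery this chapter builds: by Theorem 4.5 cluster variables biject with indecomposable rigid objects in $\CC$, of which there are finitely many when $H$ is of finite representation type, and the paper itself notes in Section 4.3 that sending indecomposable preprojective modules to cluster variables shows any connected non-Dynkin \emph{acyclic} quiver has infinitely many cluster variables. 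But be aware that the categorical route cannot deliver the full converse: it applies only when the mutation class contains an acyclic quiver, and the genuinely hard case of ``finite $\Rightarrow$ Dynkin'' is a mutation class with no acyclic representative, where only the combinatorial analysis is available.

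Two caveats on the converse as you sketch it. First, the minimal infinite cluster quivers are \emph{not} just the extended Dynkin diagrams plus a short exceptional list: as the paper explains in Section 4.2, they constitute Seven's list \cite{s1}, which corresponds to the Happel--Vossieck list \cite{hv} of minimal tilted algebras of infinite type and is a substantial classification; establishing its completeness is the bulk of the work, not a footnote. Second, the inheritance of finite type under restriction to full subquivers is a real lemma (mutation at a vertex of the subquiver does commute with restriction, but finite type concerns cluster variables, not merely quivers, so one needs an argument such as specialization of the removed variables). You flag both pressure points honestly, so the outline is a faithful roadmap of the known proof; but as written it is a roadmap, with the two theorems that carry all the weight---the denominator-vector bijection with almost positive roots, and the classification of minimal infinite diagrams---named rather than proved.
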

Note that this result is similar to Gabriel's classification theorem of the quivers of finite representation type, even though the mutation procedure is more complicated than reflections at sinks or sources.

In \secIV we consider the following problem posed by Seven:

\vspace{0.3cm} \noindent \textbf{Problem:} For which quivers $Q$ is the mutation class of $Q$ finite?

\vspace{0.3cm} \noindent \textbf{(b) Laurent phenomenon.} Observe that in the example in
Section 1.3 we see that all denominators of the cluster variables (when written in reduced form)
are monomials. Surprisingly, this is a special case of the following general result.
\begin{theorem}
  Let $C(Q)$ be a cluster algebra with initial seed $(\ul{x},Q)$. Then for any
cluster variable in reduced form $f/g$ (that is, $f$ and $g$ have no common nontrivial factor),  the denominator is a monomial in $x_1,\cdots,x_n$.
\end{theorem}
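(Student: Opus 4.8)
The plan is to recast the statement as an integrality assertion and prove it by induction on mutation distance, isolating the genuine content in a coprimality lemma.

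First I would reformulate the claim. Let $L=\mathbb{Z}[x_1^{\pm 1},\dots,x_n^{\pm 1}]$ be the ring of Laurent polynomials in the initial variables, viewed inside $F=\mathbb{Q}(x_1,\dots,x_n)$. Since $L$ is a UFD whose units are exactly the Laurent monomials up to a nonzero scalar, an element of $F$ lies in $L$ if and only if, in reduced form $f/g$ with $f,g$ coprime, the denominator $g$ is a monomial. Thus the theorem is equivalent to the \emph{Laurent phenomenon}: every cluster variable belongs to $L$. I would prove this by induction on the minimal number $d$ of mutations needed to reach a seed whose cluster contains the given variable. The case $d=0$ is trivial since the variable is some $x_i\in L$, and $d=1$ follows directly from the exchange relation $x_ix_i'=m_1+m_2$ of 1.2: here $m_1,m_2$ are monomials in $x_1,\dots,x_n$, so $x_i'=(m_1+m_2)/x_i\in L$.

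The inductive step is where the difficulty lies. A naive attempt substitutes the Laurent expressions for an intermediate cluster into $x_k'=(m_1+m_2)/x_k$. Each intermediate variable is, by induction, an element of $L$, but dividing their combination by the intermediate variable $x_k$ — itself a \emph{non-monomial} Laurent polynomial — need not return an element of $L$. So the crux is a divisibility statement: after substitution, the numerator must be divisible in $L$ by the Laurent expression of the variable we exchange out, up to a monomial. To organize this I would establish a Caterpillar-type lemma in the spirit of Fomin–Zelevinsky: take the root seed $t_0$ together with the branches to its neighbours $t_1$ (direction $j$) and $t_2$ (direction $k$), $j\neq k$, extend each branch by one further mutation, and propagate the Laurent property from the outer seeds of this configuration inward to $t_0$.

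The technical heart — and the step I expect to be the main obstacle — is a coprimality claim. Viewing the two exchange binomials at $t_0$ (the numerators $m_1+m_2$ associated with directions $j$ and $k$) as elements of the ring obtained from $L$ by localizing away all variables except the one being exchanged, one must show they share no common nonmonomial factor. Granting this, unique factorization in $L$ forces precisely the cancellations needed to cross each mutation without introducing new denominators, and the induction closes.

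Verifying the coprimality is what requires genuine care. Here I would use that $Q$ is a cluster quiver — no loops and no $2$-cycles — together with the explicit mutation rule of 1.1 governing how the arrow multiplicities, and hence the exponents in $m_1,m_2$, transform under $\mu_k$. Each binomial is primitive (its two monomials are coprime by construction), and the absence of $2$-cycles is what prevents the two relevant binomials from acquiring a shared factor; a short valuation and degree comparison in each variable then yields coprimality. I expect this coprimality argument, rather than the bookkeeping of the induction or the Caterpillar configuration itself, to be the decisive and most delicate point.
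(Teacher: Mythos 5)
You should know at the outset that the paper offers no proof of this statement: it is Fomin--Zelevinsky's Laurent phenomenon, quoted as background in Section 1.4(b) and attributed, in the Notes closing Section 1, to [FZ1]--[FZ3] and [BFZ]. So the only meaningful benchmark is the Fomin--Zelevinsky argument the paper points to, and your outline does reconstruct its architecture faithfully: reduce to membership in the Laurent ring $L$, induct on mutation distance, and close the induction with a Caterpillar-type lemma whose engine is coprimality of exchange polynomials in the UFD $L$. (A minor slip: with $L=\mathbb{Z}[x_1^{\pm 1},\dots,x_n^{\pm 1}]$ the equivalence you state is not an ``if and only if'' --- a monomial denominator only gives membership in $\mathbb{Q}[x_1^{\pm 1},\dots,x_n^{\pm 1}]$ --- but proving $\mathbb{Z}$-Laurentness is stronger, so this is harmless.)

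The genuine gap sits exactly where you placed your confidence. The coprimality claim you propose to prove --- that the two exchange binomials at a seed in distinct directions $j\neq k$ share no common nonmonomial factor, and that this follows from the absence of loops and $2$-cycles --- is false. Take the quiver $1\to 3\leftarrow 2$: it is connected, acyclic, with no loops or $2$-cycles, yet the exchange binomials at vertices $1$ and $2$ are both $1+x_3$. (No $2$-cycles only guarantees that the two monomials \emph{inside} a single exchange binomial are coprime; it says nothing about binomials in different directions, which coincide whenever two vertices see the same neighbours.) Worse, this is not a repairable edge case inside your scheme: for this quiver, with $t_1=\mu_1(t_0)$, $t_2=\mu_2\mu_1(t_0)$, $t_3=\mu_1\mu_2\mu_1(t_0)=\mu_2(t_0)$, the element $z=x_1x_2/(1+x_3)$ satisfies $z=x_2/x_1'\in\mathcal{L}(t_1)$, $z=(1+x_3)/(x_1'x_2')\in\mathcal{L}(t_2)$, $z=x_1/x_2'\in\mathcal{L}(t_3)$, but $z\notin\mathcal{L}(t_0)$, where $\mathcal{L}(t)$ denotes the Laurent ring of the cluster at $t$. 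So the ``propagate Laurentness inward from the outer seeds'' step is not merely unproved; as a statement about intersections of these Laurent rings it is false in the commuting case $b_{jk}=0$.

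This is why the actual Fomin--Zelevinsky proof is more delicate than your plan allows. Their Caterpillar Lemma does not require coprimality of the two binomials at a seed: its hypotheses involve the specialization $Q_0=Q|_{x_j=0}$ of the direction-$k$ exchange polynomial at the adjacent seed --- which is a \emph{monomial} precisely when $j$ and $k$ are joined by an arrow, making that coprimality automatic --- together with a substitution identity describing how the direction-$j$ exchange polynomial transforms under mutation at $k$; it is this identity (not unique factorization alone) that produces the needed cancellations, and the residual degenerate case of proportional binomials has to be handled by a separate device (exploiting that $\mu_j$ and $\mu_k$ commute, or working with sufficiently general coefficients and specializing them to $1$ at the end). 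Your proposal contains no counterpart of any of this, so the induction cannot close as described: the missing idea is exactly at the point you flagged as decisive.
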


\vspace{0.3cm} \noindent \textbf{(c) The monomial in the denominators of cluster
variables.} Taking a closer look at the monomials in the denominators in the example in
1.3, we see that interpreting the factors $x_i$ as the simple modules $S_i$ corresponding
to vertex $i$, the denominators correspond to indecomposable modules via the composition
factors. This was already proved in \cite{fz2} for the case of a Dynkin quiver with no
paths of length strictly greater than one. As we shall see later, there are more general
results in this direction, obtained as application of the theory of cluster categories
and cluster-tilted algebras.

\vspace{0.3cm} \noindent \textbf{(d) Positivity.} Considering again our example, we see
that in the numerator, all monomials have positive coefficients. This has been
conjectured to be true in general. Note that even though the monomials $m_1$ and $m_2$ have positive coefficients, this property could get destroyed when putting the element $x_i'=\frac{m_1+m_2}{x_i}$ in reduced form.

\vspace{0.3cm} \noindent \textbf{(e) Clusters and seeds.} Another interesting problem is
the following, proved for finite type in \cite{fz2}.

\vspace{0.2cm} 
\noindent {\bf Problem:} Is a seed $(\ul{x}',Q')$ expressed in terms of the initial seed
$(\ul{x}, Q)$ uniquely determined by its cluster $\ul{x}'$, that is, if we know $\ul{x}'$,
then do we also know $Q'$?

\vspace{0.3cm} \noindent {\bf (f) Clusters differing only at one cluster variable.} When
applying mutation of seeds, the new cluster $\ul{x}''$ has exactly one cluster variable
which is not in the old cluster $\ul{x}'$. If we again consider the example in 1.3, we see
that if we remove a cluster variable from a cluster, there is a unique
 other cluster variable which can replace it to get  a new cluster. More generally,
the following is proved in \cite{fz2}.
\begin{theorem}
  Let $C(Q)$ be the cluster algebra associated with a Dynkin quiver $Q$. Then there
is a unique way to replace a cluster variable in a cluster by another cluster variable to
get a new cluster.
\end{theorem}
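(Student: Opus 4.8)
The plan is to work through the root-theoretic model of finite-type cluster algebras. Since $Q$ is a Dynkin quiver, it is attached to a finite root system $\Phi$ with simple roots $\alpha_1,\dots,\alpha_n$, and the relevant combinatorics will turn out to be independent of the chosen orientation of $Q$. First I would note that \emph{existence} of a replacement is already built into the definition of mutation: applying $\mu_i$ to the given cluster replaces the chosen variable $x_i$ by $x_i'$ and produces a new cluster, so at least one valid replacement always exists. The whole content of the statement is therefore \emph{uniqueness} --- that no cluster variable other than $x_i'$ can be swapped in for a given $x_i$.

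To reach uniqueness I would pass from cluster variables to a purely combinatorial model. First I would establish the bijection between the cluster variables of $C(Q)$ and the almost positive roots $\Phi_{\geq -1}=\Phi_{>0}\cup(-\{\alpha_1,\dots,\alpha_n\})$: by the Laurent phenomenon (the theorem on monomial denominators above), each non-initial cluster variable in reduced form $f/g$ has a well-defined denominator vector $(d_1,\dots,d_n)$ with $g=\prod_i x_i^{d_i}$, and one checks that $\sum_i d_i\alpha_i$ is a positive root, while the initial variables $x_i$ are assigned the negative simple roots $-\alpha_i$. Next I would introduce the compatibility degree $(\,\alpha\,\|\,\beta\,)$ on $\Phi_{\geq-1}$ and prove that two cluster variables lie in a common cluster precisely when their associated roots have compatibility degree zero; since this relation is symmetric, clusters are exactly the maximal pairwise-compatible subsets of $\Phi_{\geq-1}$, i.e.\ the maximal faces of the \emph{cluster complex} $\Delta(\Phi)$. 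A separate purity argument shows every cluster has exactly $n$ elements.

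With this model in place, the theorem becomes the assertion that $\Delta(\Phi)$ is a pseudomanifold: every face of codimension one --- that is, every set of $n-1$ pairwise compatible almost positive roots --- is contained in exactly two maximal faces. Removing the variable $x_i$ from a cluster leaves precisely such a codimension-one face, and the two maximal faces containing it are the original cluster together with exactly one other, whose extra element provides the unique replacement. Existence of the ``other'' face is again just mutation, so the real work is to show there cannot be three or more. I would prove this by the root-theoretic computation at the heart of Fomin--Zelevinsky's analysis: fixing a compatible $(n-1)$-element set $S$, one studies the set of almost positive roots compatible with every element of $S$ via the explicit combinatorics of the compatibility degree and shows it has cardinality exactly two.

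The main obstacle, and the genuine mathematical core, is this last step --- the ``exactly two'' count in the pseudomanifold property. The existence half (at least two) is formal, but ruling out a third compatible extension requires detailed root-system bookkeeping. I would expect to either exploit the two piecewise-linear involutions $\tau_+,\tau_-$ on $\Phi_{\geq-1}$, whose action is transitive enough to normalize the configuration and reduce the count to a manageable base case, or argue type-by-type through $\mathbf{A}_n,\mathbf{D}_n,\mathbf{E}_6,\mathbf{E}_7,\mathbf{E}_8$, matching the count against the face structure of the generalized associahedron. This is the point where one must actually compute, rather than merely invoke the earlier finiteness and Laurent phenomenon results.
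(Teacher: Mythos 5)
The paper itself offers no proof of this statement: it appears in Section~1.4(f) with the attribution ``the following is proved in \cite{fz2}'', so the fair comparison is with that source and with the machinery the paper develops later. Your outline is a faithful roadmap of the \cite{fz2} argument --- denominator vectors parametrize cluster variables by almost positive roots $\Phi_{\geq -1}$, the compatibility degree makes clusters the maximal pairwise-compatible subsets, and the theorem becomes the pseudomanifold property of the cluster complex --- but as a proof it has a genuine gap, and you have located it yourself: the ``exactly two'' count is never established. Naming two candidate strategies (normalizing by the involutions $\tau_+,\tau_-$, or checking type by type against the generalized associahedra) is a plan, not an argument, and in \cite{fz2} essentially all of the difficulty is concentrated in precisely this step and in the two other pillars you also defer without proof (the root parametrization and the compatibility characterization of clusters, each a substantial theorem there). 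So the proposal reduces the theorem to the main results of the paper it would need to cite. A smaller inaccuracy: the denominator parametrization is proved in \cite{fz2} only for a bipartite orientation (the paper notes this restriction in Section~1.4(c)), so your opening claim of orientation-independence needs an argument --- for instance, that all orientations of a Dynkin diagram are connected by sink/source mutations and that mutation-equivalent quivers give isomorphic cluster algebras.

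It is also worth contrasting your route with the one the paper's own framework supplies for the stronger acyclic statement (Theorem~4.7). There, uniqueness is not a root-system computation but a structural fact in the cluster category: an almost complete cluster-tilting object has exactly two complements (Theorem~2.7), and the bijection $\psi$ of Theorem~4.5 between indecomposable rigid objects and cluster variables, taking cluster-tilting objects to clusters, transports this ``exactly two'' statement to clusters. That approach buys both the generalization beyond Dynkin type and a conceptual replacement for the count you are missing: the two complements come from the two exchange triangles, with no case analysis. If your goal is a complete proof rather than a reduction to \cite{fz2}, the categorical argument is the more economical one to carry out.
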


In general, there is the following problem.

\bigskip
\noindent {\bf Question:} For any cluster algebra, is there a unique way of replacing any
cluster variable in a cluster by another cluster variable to get a new cluster?

\vspace{0.3cm} We remark that in the Dynkin case it is known that the
cluster variables are in bijection with the almost positive roots, that is, the positive
roots together with the negative simple roots.


\subsection{Possible modelling}
The theory of cluster algebras has many nice features, and it is an interesting problem
to see if one can find good analogs of the main ingredients involved in their definition,
in some appropriate category $\cc$.

We want this category to be additive and to have the following properties.

(i) To have an analog of clusters we want a special class of objects, all having the same
number $n$ of nonisomorphic indecomposable summands.

(ii) To imitate the process of seed mutation, we would want that each indecomposable
summand of an object in the class can be replaced by a (unique) nonisomorphic
indecomposable object such that we still get an object in our class.

(iii) To get a categorical interpretation of the definition of the new cluster variable
$x_i''$ coming from $x_i'$, we would want that when an indecomposable object $M$ is
replaced by an indecomposable object $M^*$, there is a relationship between $M$ and $M^*$ corresponding to the formula $x_i'x_i''=m_1+m_2$. One possible way would be to have exact sequences or triangles $M^*\to B\to M$ and $M\to B' \to M^*$ with $B$ and $B'$ related to $m_1$ and $m_2$.

(iv) We would want  an interpretation of the  FZ-mutation.

\vspace{0.3cm} The hope would be that this point of view should lead to an interesting
theory in itself, and at the same time, or instead, give a better understanding of the
cluster algebras.

\vspace{0.3cm} {\bf Notes:} The material in 1.1,1.2,1.4 is taken from
\cite{fz1}\cite{fz2}\cite{fz3}\cite{bfz}; see \cite{birsc} for material related to 1.5.

\bigskip

\section{Cluster categories}
Associated with a given cluster algebra we want to find some category $\cc$ having a set
of objects which we can view as analogs of clusters and which satisfy some or all of the
requirements listed in 1.5.

A cluster algebra is said to be \textit{acyclic} if in the mutation class of the associated
cluster quivers there is some quiver $Q$ with no oriented cycles. Then we have an
associated finite dimensional hereditary $k$-algebra $kQ$. So the category $\mod kQ$ of
finite dimensional  $kQ$-modules might be a natural choice of category for modelling
acyclic cluster algebras.

\bigskip
\subsection{Tilting modules over hereditary algebras}
If we consider $\cc=\mod kQ$ as the category we are looking for, then a natural choice of
objects would be the tilting $kQ$-modules. On one hand the reason is that they have $n$
nonisomorphic indecomposable summands, where $n$ is the number of vertices in $Q$. On the
other hand there is a special tilting module associated with
 a BGP-reflection of a quiver, and as we have seen, BGP-reflection is a special case
of FZ-mutation. It will be instructive to first discuss the connection between  BGP-reflection and tilting.
Recall that for a hereditary algebra $H$, an $H$-module $T$ is \emph{tilting} if
$\Ext^1_{H}(T,T)=0$ and $T$ has exactly $n$ nonisomorphic indecomposable summands up to
isomorphism.

\bigskip
\noindent {\bf Example:} Let $Q$ be the quiver $1\to 2\to 3$ and $H=kQ$.

{\bf (a)} We first do mutation at the vertex 3. Then $\mu_3(Q)=Q_4\colon 1\to 2\leftarrow
3$. The $H$-module $H$ is clearly a tilting $H$-module. Write $H=P_1\oplus
P_2\oplus P_3$, where $P_i$ is the indecomposable projective module associated with the
vertex $i$, and let $S_i$ denote the simple top of $P_i$. Let $T= P_1\oplus
P_2\oplus\tau^{-1}S_3$, where $\tau$ denotes the translation associated with almost split
sequences, so that $\tau^{-1}S_3=S_2$. We then have the following AR-quiver.

\[\xymatrix@C0.5cm@R0.5cm{
  && P_1\ar[dr]&&\\
  & P_2\ar[ur]\ar[dr]&& {\begin{smallmatrix} S_1\\S_2
\end{smallmatrix}}\ar[dr]\ar@{--}[ll]&\\
  S_3=P_3\ar[ur]&& S_2\ar[ur]\ar@{--}[ll]&& S_1\ar@{--}[ll]
}\]

Note that $\End_{H}(T)^{\op}\simeq kQ'=H'$, and $\End_{H}(H)^{\op}\simeq kQ$. So we can
pass from $kQ$ to $kQ_4$, and hence from $Q$ to $Q_4$, by replacing the indecomposable
summand $P_3$ of the tilting module $H$ by $\tau^{-1}S_3$ to get another tilting
$H$-module, and then taking endomorphism algebras. Note that $\tau^{-1}P_3$ is the only
indecomposable $H$-module which can replace $P_3$ to give a new tilting module.

This example illustrates the module theoretical interpretation of the BGP-reflection
functors. The functor $\Hom_H(T, \ )\colon \mod H\to \mod H'$ induces an equivalence $\mod_{P_3}H\to \mod_{S'}H'$, where the indecomposable modules in
$\mod_{P_3}H$ are those in $\mod H$ except $P_3$, and the indecomposable modules in
$\mod_{S'}H'$ are those in $\mod H'$, except some simple injective $H'$-module $S'$.
Hence we also get a close connection between the AR-quivers, and the AR-quiver for $H'$
is the following

$$\xymatrix@C0.5cm@R0.5cm{
  & \cdot\ar[dr]&& S'\ar@{--}[ll]\\
  \cdot\ar[ur]\ar[dr]&&\cdot\ar[ur]\ar[dr]\ar@{--}[ll]&\\
  & \cdot\ar[ur]&& \cdot\ar@{--}[ll]
}$$ 

 {\bf (b)} We now do FZ-mutation at vertex 2 in $Q$, and get
$\mu_2(Q)=Q_3\colon \xymatrix@R0.2cm@C0.4cm{ 1\ar@/^/[rr]&2\ar[l]&3\ar[l] }$. Then it is
natural to try to replace  $P_2$ in $H=P_1\oplus P_2\oplus P_3$ to see if we can get a
nonisomorphic tilting module, and if there is a unique one. This is indeed the case, and
the new tilting module is $T=P_1\oplus S_1\oplus P_3$. But here we have maps $P_3\to
P_1\to S_1$ with zero composition, so that $\End_{H}(T)^{\op}$ is given by the quiver
with relations $\xymatrix@R0.2cm@C0.4cm{ 1\ar@/^0.8pc/[rr]&2\ar@{}@<-1.1ex>[r]^(0.5){\cdots\cdots}\ar[l]&3 }$, where
an arrow $3\to2$ is missing compared to $Q_3$.

So our procedure does not work from the point of view of getting a model for the
FZ-mutation, but it is quite close to working. What we would need is to have more maps in
our category than  what we have in $\mod H$, in particular we would like to have nonzero
maps from $S_1$ to $P_3=S_3$.

{\bf (c)} We also consider $\mu_1(Q)=Q_2\colon 1\leftarrow 2\to 3$ from the same point of view.
Now we would like to replace $P_1$ in $H=P_1\oplus P_2\oplus P_3$ with another
indecomposable $H$-module to  obtain a tilting module. But here we encounter a problem at
an earlier stage. This is actually not possible. The general explanation is that a
projective injective module has to be a direct summand of any tilting module. So here we
get a problem which indicates that the category $\mod H$ is not large enough for being
able to replace $P_1$.

In conclusion, as illustrated by this example, there are the following problems with
using the tilting modules over hereditary algebras as a model for clusters.

(1) There are not enough objects in order to replace any indecomposable summand of a
tilting module with a nonisomorphic indecomposable module to get a new tilting module.

(2) The quiver of the endomorphism algebra of a tilting module is not the desired one,
the problem being that there are not enough maps.

\bigskip

We call an $H$-module $\ol{T}$ with $\Ext^1_H(\ol{T},\ol{T})=0$ and with $n-1$ nonisomorphic indecomposable summands  an \emph{almost complete tilting} $H$-module. An indecomposable $H$-module $M$ such that $\ol T\oplus M$ is a tilting module is called a \textit{complement} of $\ol T$.  The following is known for tilting $H$-modules.
\begin{theorem}
 (a) If $T$  is a tilting $H$-module, then each indecomposable summand $M$ can be
replaced by at most one nonisomorphic indecomposable $H$-module to get a new tilting module.

(b) There are exactly two complements for $T/M$ if and only if $T/M$ is sincere, that is, each simple
$H$-module occurs as a composition factor.
\end{theorem}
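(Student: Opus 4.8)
The plan is to forget the particular summand and instead study the \emph{almost complete tilting module} $\ol T := T/M$, of which $M$ is one complement; statement (a) then becomes the assertion that $\ol T$ has at most two complements, and (b) that it has exactly two precisely when $\ol T$ is sincere. Throughout I would record the immediate consequences of $\Ext^1_H(T,T)=0$, namely $\Ext^1_H(\ol T,\ol T)=0$ and $\Ext^1_H(\ol T,M)=\Ext^1_H(M,\ol T)=0$, together with the standard fact that the classes of the indecomposable summands of a tilting module form a $\Z$-basis of $K_0(H)\cong\Z^n$. Writing $\ol T=\ol T_1\oplus\cdots\oplus\ol T_{n-1}$ and $L:=\sum_i \Z\,[\ol T_i]$, this says that $L$ is a rank $n-1$ \emph{saturated} sublattice and that the dimension vector of any complement maps to a generator of $K_0(H)/L\cong\Z$.

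The engine of the whole argument is the exchange sequence. Since $\add\ol T$ is functorially finite (being the additive closure of a single module), every module has minimal left and right $\add\ol T$-approximations, and I would use the classical fact that any two \emph{distinct} complements $M,M'$ of $\ol T$ sit in a non-split short exact sequence $0\to M\to B\to M'\to 0$ (or with the roles reversed) with $B\in\add\ol T$, obtained from such an approximation. This immediately yields (a): from the sequence $[M]+[M']=[B]\in L$, so the images of $[M]$ and $[M']$ in $K_0(H)/L\cong\Z$ are opposite generators. Hence distinct complements cannot have equal residue modulo $L$; since $\Z$ has only the two generators $\pm g$, three pairwise distinct complements would force two of them to share a residue, a contradiction. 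Thus $\ol T$ has at most two complements.

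For (b) in the easy direction, suppose $\ol T$ is not sincere, say the simple $S_j$ is not a composition factor of $\ol T$. Then every object of $\add\ol T$ lies in the subcategory $\cs_j$ of modules without $S_j$ as a composition factor, and $\cs_j\cong\mod H''$ for a hereditary algebra $H''$ with $n-1$ simples. If $\ol T$ had two complements $M,M'$, the $j$-th coordinates in the exchange sequence would satisfy $[M]_j+[M']_j=[B]_j=0$ with both summands $\ge 0$, forcing $M,M'\in\cs_j$ as well. But then $\ol T\oplus M$ would be a rigid module in $\mod H''$ with $n$ non-isomorphic indecomposable summands, which is impossible since a rigid module over $H''$ has at most $n-1$ of them. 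Hence a non-sincere $\ol T$ has its (existing) complement $M$ as its only one.

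It remains to show that a sincere $\ol T$ admits a second complement, and this is the step I expect to be the main obstacle. Here I would construct the partner explicitly: form a minimal right $\add\ol T$-approximation $g\colon B\to M$ and a minimal left $\add\ol T$-approximation $f\colon M\to B'$, and show that sincerity guarantees that (at least) one of them is respectively surjective or injective with a non-zero cokernel, respectively kernel. One then sets $M^\ast:=\ker g$ (resp. $\Coker f$) and must verify that $M^\ast$ is indecomposable, that $\Ext^1_H(\ol T\oplus M^\ast,\ol T\oplus M^\ast)=0$, and that $[M^\ast]$ completes $\{[\ol T_i]\}$ to a basis, so that $\ol T\oplus M^\ast$ is again a tilting module; the residue computation of the second paragraph then forces $M^\ast\not\cong M$. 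The delicate point is precisely the use of sincerity to exclude the degenerate possibilities, namely the approximation being an isomorphism, or its kernel/cokernel being decomposable or non-rigid; morally, sincerity says that $\ol T$ does not already ``fill up'' any proper simple-closed subcategory, which is exactly what leaves room for the second completion and is the mirror image of the obstruction exploited in the non-sincere case. Equivalently, one may produce one complement by Bongartz's universal-extension completion and the second by the dual construction using injectives, the dual being available precisely because $\ol T$ is sincere.
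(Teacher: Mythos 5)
First, a remark on the comparison you were asked for: the paper does not prove this theorem at all --- it is survey material, attributed in the Notes to Happel--Unger, Riedtmann--Schofield and Unger --- so your proposal has to stand on its own. Parts of it do. Granting the exchange-sequence theorem (the paper's Theorem 2.2), your $K_0$-residue argument for (a) is correct: the classes of the indecomposable summands of any tilting module form a basis of $K_0(H)$, so every complement maps to a generator of $K_0(H)/L$, and an exchange sequence forces two distinct complements to have opposite residues, which rules out a third. Your argument that a non-sincere $\ol T$ cannot have two complements is also correct: the exchange sequence forces both complements to vanish at the missing vertex, so $\ol T\oplus M$ becomes a rigid module with $n$ non-isomorphic indecomposable summands inside a category equivalent to $\mod kQ''$, where $Q''$ has only $n-1$ vertices --- impossible. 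One caveat: in the standard development the ``classical fact'' you import is itself obtained from the at-most-two analysis (one first shows there is at most one complement inside $\Fac\ol T$ and at most one outside, and only then can two arbitrary given complements be linked by a single exchange sequence), so unless you point to an independent proof of Theorem 2.2, your derivation of (a) from it risks running in a circle.

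The genuine gap is the sincere direction of (b), the existence of a second complement, and you flag it yourself as ``the main obstacle''. What you leave ``to verify'' is precisely the content of the theorem: that sincerity forces a complement $M$ to lie in $\Fac\ol T$ or in $\Sub\ol T$ (equivalently, that the minimal right $\add\ol T$-approximation of $M$ is surjective or the minimal left one is injective), and that the resulting kernel or cokernel is indecomposable, rigid together with $\ol T$, and non-isomorphic to $M$. None of this is routine; it is exactly where sincerity enters, via a torsion-theoretic analysis of the tilting torsion classes squeezed between $\Fac\ol T$ and $\{N:\Ext^1_H(\ol T,N)=0\}$. Your fallback suggestion is moreover incorrect as stated: over a hereditary algebra every module has injective dimension at most one, so tilting and cotilting modules coincide, and both the Bongartz completion and its dual are available for \emph{every} almost complete tilting module, sincere or not. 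The issue is not the availability of the dual construction but proving that the two completions yield non-isomorphic complements exactly when $\ol T$ is sincere --- which is, again, the statement to be proved. So your proposal establishes (a) (modulo the circularity caveat) and one implication of (b), but reduces the other implication to an unproven lemma that carries the theorem's real difficulty.
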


Note that in our example $P_1\oplus P_2$ and $P_1\oplus P_3$ are sincere, whereas
$P_2\oplus P_3$ is not. 

In the case when an almost complete tilting module $\ol{T}$ has two complements, that is,
there are two ways of completing it to a tilting module, they are connected as follows:

\begin{theorem}
  Let $\overline{T}$ be an almost complete tilting $H$-module and $M$ and $M^*$
nonisomorphic indecomposable modules such that $\ol{T}\oplus M$ and $\ol{T}\oplus M^*$
are  tilting modules. Then there is an exact sequence $0\to M^*\xrightarrow{g} B \xrightarrow{f} M \to 0$ where
$f\colon B\to M$ is a minimal right $\add\ol{T}$-approximation and $g\colon M^*\to B$ is
a minimal left $\add\ol{T}$-approximation or an exact sequence $0\to M\to B'\to M^*\to 0$
with the corresponding properties.
\end{theorem}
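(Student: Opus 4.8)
The plan is to produce the sequence by taking a minimal right $\add\overline{T}$-approximation of $M$, identifying its kernel with the second complement $M^{*}$, and then checking the left-approximation property; the two alternatives in the statement will correspond to which of the two groups $\Ext^1_H(M,M^{*})$ and $\Ext^1_H(M^{*},M)$ is nonzero. First I would record the rigidity consequences of the hypotheses: since $\overline{T}\oplus M$ and $\overline{T}\oplus M^{*}$ are tilting, the groups $\Ext^1_H(\overline{T},\overline{T})$, $\Ext^1_H(\overline{T},M)$, $\Ext^1_H(M,\overline{T})$, $\Ext^1_H(\overline{T},M^{*})$, $\Ext^1_H(M^{*},\overline{T})$ all vanish. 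A rigid $H$-module has at most $n$ nonisomorphic indecomposable summands (Bongartz), so $\overline{T}\oplus M\oplus M^{*}$, which has $n+1$ such summands, cannot be rigid; hence at least one of $\Ext^1_H(M,M^{*})$, $\Ext^1_H(M^{*},M)$ is nonzero. Up to interchanging $M$ and $M^{*}$ — which interchanges the two sequences in the statement — I may assume $\Ext^1_H(M,M^{*})\neq 0$ and aim for $0\to M^{*}\to B\to M\to 0$.

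Next I would take a minimal right $\add\overline{T}$-approximation $f\colon B\to M$, with $B\in\add\overline{T}$, and set $K=\Ker f$ and $C=\Coker f$. The crucial leverage is that $H$ is hereditary, so $\Ext^2_H$ vanishes and the relevant long exact sequences truncate. Applying $\Hom_H(\overline{T},-)$ and $\Hom_H(-,\overline{T})$ to the short exact sequences extracted from $f$, and using $\Ext^1_H(\overline{T},B)=0=\Ext^1_H(B,\overline{T})$ together with the approximation property (every map $\overline{T}\to M$ factors through $f$), I obtain $\Ext^1_H(\overline{T},K)=0$, $\Ext^1_H(K,\overline{T})=0$, and $\Hom_H(\overline{T},C)=\Ext^1_H(\overline{T},C)=0$.

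The main obstacle is to show that $f$ is surjective, i.e. $C=0$, and that $K$ is exactly the second complement. For surjectivity I would invoke the standing assumption $\Ext^1_H(M,M^{*})\neq 0$: a nonzero $C$ would be a proper quotient of $M$ lying in the torsion class $\Fac(\overline{T}\oplus M)$ yet with $\Hom_H(\overline{T},C)=0$, and tracing this configuration back produces the dual exchange sequence $0\to M\to B'\to M^{*}\to 0$, forcing $\Ext^1_H(M,M^{*})=0$ — a contradiction. Hence $C=0$, and $0\to K\to B\xrightarrow{f}M\to 0$ is exact with $\Ext^1_H(\overline{T},K)=\Ext^1_H(K,\overline{T})=0$. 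Checking $\Ext^1_H(K,K)=0$ from this sequence and the rigidity of $B$ shows that $\overline{T}\oplus K$ is a partial tilting module; having $n$ nonisomorphic indecomposable summands it is tilting by Bongartz, so $K$ is a complement of $\overline{T}$ distinct from $M$, whence $K\cong M^{*}$ by the previous theorem (two complements in the sincere case).

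Finally I would verify that $g\colon M^{*}\cong K\hookrightarrow B$ is a minimal left $\add\overline{T}$-approximation: the left-approximation property follows by applying $\Hom_H(-,\overline{T})$ to $0\to M^{*}\xrightarrow{g}B\xrightarrow{f}M\to 0$, since the cokernel of $\Hom_H(B,\overline{T})\to\Hom_H(M^{*},\overline{T})$ embeds into $\Ext^1_H(M,\overline{T})=0$; minimality of $g$ follows from the right-minimality of $f$ (the middle term $B$ has no summand splitting off either map). The second alternative is obtained by the symmetric argument starting from $\Ext^1_H(M^{*},M)\neq 0$ and approximating $M^{*}$. I expect the genuinely delicate point to be this surjectivity/kernel-identification step — correctly selecting which of the two alternatives occurs and ruling out a nonzero cokernel — while the remaining verifications are routine diagram chases that rely essentially on $H$ being hereditary.
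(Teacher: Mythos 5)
The paper does not prove this theorem (it is quoted from the tilting-theory literature of Happel--Unger and Riedtmann--Schofield), so your argument must stand on its own; its skeleton is the standard one, but the step you yourself flag as the crux is argued circularly. To show that $f$ is surjective you claim that $\Coker f\neq 0$ would ``produce the dual exchange sequence $0\to M\to B'\to M^*\to 0$,'' contradicting $\Ext^1_H(M,M^*)\neq 0$. But the existence of an exchange sequence is precisely the conclusion of the theorem: no construction is given, and the only construction available inside your proposal is your own argument with $M$ and $M^*$ interchanged, which stalls at exactly the same surjectivity question. What is missing is an independent proof that, after relabeling, $M\in\Fac\ol{T}$ (this is what makes the minimal right $\add\ol{T}$-approximation surjective), together with the fact that this membership can be correlated with the direction of the nonvanishing extension group. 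Your WLOG does not supply either point: the counting argument shows only that $\Ext^1_H(M,M^*)$ and $\Ext^1_H(M^*,M)$ do not both vanish --- a priori both could be nonzero --- and nothing in the proposal ties $\Ext^1_H(M,M^*)\neq 0$ to $M\in\Fac\ol{T}$. This is where the real content of the theorem lies; the classical proofs supply it with torsion-theoretic input (for instance Bongartz's complement and the fact that the $\Ext$-projective modules in the torsion class $\Fac T^*$ of a tilting module $T^*=\ol{T}\oplus M^*$ are exactly those in $\add T^*$), none of which appears in your sketch.

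Separately, even granting surjectivity, your identification $\Ker f\cong M^*$ is not established. You argue that $\ol{T}\oplus K$ (where $K=\Ker f$) is rigid and, ``having $n$ nonisomorphic indecomposable summands,'' is tilting by Bongartz --- but this presupposes that $K$ is indecomposable and has no summand in $\add\ol{T}$, which is exactly what is at stake. Every property you verify holds equally for $K\cong(M^*)^r$ with $r\geq 2$, in which case $\ol{T}\oplus K$ still has exactly $n$ nonisomorphic indecomposable summands and Bongartz merely re-proves that $\ol{T}\oplus M^*$ is tilting; it does not give $r=1$, hence not the asserted short exact sequence. This part is repairable with tools you already have, but in the opposite order: first prove (as you do at the end) that $g\colon K\to B$ is a left $\add\ol{T}$-approximation, using $\Ext^1_H(M,\ol{T})=0$; next note that right minimality of $f$ excludes summands of $K$ lying in $\add\ol{T}$ (for such a summand $X$ one has $\Ext^1_H(M,X)=0$, so $X$ splits off as a direct summand of $B$ contained in $\Ker f$), and that $\Ext^1_H(K,K)=0$ together with nonsplitness of the sequence excludes summands of $K$ isomorphic to $M$; hence $K\cong(M^*)^r$. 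Finally, a left $\add\ol{T}$-approximation of $(M^*)^r$ is, up to a summand receiving the zero map, the $r$-fold direct sum of the minimal left approximation of $M^*$, so the cokernel $M$ of $g$ would decompose into $r$ nonzero summands plus a module in $\add\ol{T}$; indecomposability of $M$ then forces $r=1$, and the same computation delivers the left minimality of $g$, which you assert but do not prove.
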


There is an important class of algebras associated with tilting  modules over hereditary
algebras. An algebra is said to be \emph{tilted} if it is of the form
$\End_{H}(T)^{\op}$, where $T$ is a tilting module over a finite dimensional hereditary
algebra $H$. These algebras appear frequently in
representation theory, and they are close enough to hereditary algebras to inherit nice
properties.

For an $H$-module $T$, denote by $\Fac T$ the subcategory  of $\mod H$ whose objects are
factors of finite direct sums of copies of $T$ and by $\Sub T$ the subcategory whose objets are submodules of finite direct sums of copies of $T$. Recall also that a subcategory of $\mod
H$ is a \textit{torsion class} if it is closed under factors and extensions, and a \textit{torsionfree}
class if it is closed under submodules and extensions. Then we have the following
relationship between hereditary algebras and tilted algebras.

\begin{theorem}
  Let $H$ be a hereditary finite dimensional algebra, and $T$ be a tilting $H$-module,
and $\la=\End_{H}(T)^{\op}$.

\noindent (a) $\ct=\Fac T$ is a torsion class in $\mod H$, with associated torsionfree
class $\cF=\{X;\Hom_{H}(T,X)=0\}$, so that $(\ct,\cF)$ is a \textit{torsion pair}.

\noindent (b)There exists a \textit{torsion pair} $(\cx,\cy)$ in $\mod\la$, where $\cy=\Sub D(T)$ such that
\begin{itemize}
  \item[(i)]$\Hom_H (T,\text{ })\colon\mod H\to \mod\la$ induces an equivalence
between $\ct$ and $\cy$
\item[(ii)]$\Ext^1_{H}(T, \text{ })\colon\mod H\to \mod\la$ induces an equivalence
between $\cF$ and $\cx$
\item[(iii)] each indecomposable object in $\mod\la$ is in $\cx$ or $\cy$.
\end{itemize}
\end{theorem}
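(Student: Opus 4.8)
For part (a), the plan is to start from the homological description $\ct=\Fac T=\{Y\in\mod H:\Ext^1_H(T,Y)=0\}$, which is available because $H$ hereditary gives $\pd_H T\le1$ and $T$ is tilting. The inclusion $\Fac T\subseteq\{Y:\Ext^1_H(T,Y)=0\}$ follows by applying $\Ext^1_H(T,-)$ to a surjection $T^m\twoheadrightarrow Y$ and using $\Ext^1_H(T,T^m)=0$ together with $\Ext^2_H(T,-)=0$; the reverse inclusion is the standard Bongartz argument via the tilting sequence $0\to H\to T^0\to T^1\to0$. Given this description, $\ct$ is closed under quotients by the definition of $\Fac$, and closed under extensions by the long exact $\Ext^1_H(T,-)$-sequence, hence is a torsion class. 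Its associated torsionfree class $\{X:\Hom_H(Y,X)=0\text{ for all }Y\in\ct\}$ equals $\cF=\{X:\Hom_H(T,X)=0\}$: one inclusion holds since $T\in\ct$, and the other because every $Y\in\Fac T$ is a quotient of some $T^m$, so a map $Y\to X$ lifts to $T^m\to X$ and vanishes when $\Hom_H(T,X)=0$. This yields the torsion pair $(\ct,\cF)$.

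For part (b) the substance is the Brenner--Butler theorem. I would work with the four functors $F=\Hom_H(T,-)$ and $F'=\Ext^1_H(T,-)$ from $\mod H$ to $\mod\la$, together with $G=T\ten_\la-$ and $G'=\operatorname{Tor}_1^{\la}(T,-)$ in the opposite direction, and set $\cy=F(\ct)$, $\cx=F'(\cF)$. The base case is that $F$ and $G$ restrict to mutually inverse equivalences between $\add T$ and the projective $\la$-modules: this is exactly the identification $\End_H(T)^{\op}=\la$, which gives $F(T)=\la$ and $GF|_{\add T}\cong\mathrm{id}$.

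Next I would prove the two equivalences by showing that the evaluation counit $GF(X)\to X$ is an isomorphism for $X\in\ct$ and the unit $N\to FG(N)$ is an isomorphism for $N\in\cy$. The key input is that every $X\in\ct$ admits a short exact sequence $0\to T_1\to T_0\to X\to0$ with $T_0,T_1\in\add T$: the minimal right $\add T$-approximation $T_0\to X$ is onto, its kernel $T_1$ satisfies $\Ext^1_H(T,T_1)=0$, and over the hereditary $H$ one gets $\Ext^1_H(T_1,\ct)=0$ as well (using $\Ext^2_H=0$), so $T_1$ is $\Ext^1_H$-projective in $\ct$ and hence lies in $\add T$. Applying $F$ (exact on this sequence, as $\Ext^1_H(T,X)=0$) and then $G$, and using $GF|_{\add T}\cong\mathrm{id}$ together with $\operatorname{Tor}_1^{\la}(T,FX)=0$ (i.e. $FX\in\cy$) to keep the sequence left exact, a five-lemma comparison gives the counit isomorphism; the symmetric argument handles the unit, proving (i), and the same scheme with $F',G'$ on $\cF,\cx$ proves (ii). To identify $\cy=\Sub D(T)$ I would use the adjunction $\Hom_H(T,DH)\cong D(T)$ together with $DH\in\ct$ (injective, so $\Ext^1_H(T,DH)=0$): since every $X\in\ct$ embeds in a finite sum of copies of the cogenerator $DH$ and $F$ is left exact, $F(X)$ embeds in a sum of copies of $D(T)$, giving $\cy\subseteq\Sub D(T)$, with equality once $\cy$ is recognized as a torsionfree class.

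Finally, for the torsion pair and (iii) I would describe $(\cx,\cy)$ intrinsically in $\mod\la$ as $\cx=\{N:T\ten_\la N=0\}$ and $\cy=\{N:\operatorname{Tor}_1^{\la}(T,N)=0\}$. Right-exactness of $\ten$ makes $\cx$ closed under quotients and extensions, and the long exact $\operatorname{Tor}$-sequence (with $\operatorname{Tor}_{\ge2}^{\la}(T,-)=0$, as $T$ has projective dimension $\le1$ over $\la$) makes $\cy$ closed under submodules and extensions, so $(\cx,\cy)$ is a torsion pair; matching these descriptions with $F'(\cF)$ and $F(\ct)$ through the equivalences yields (i), (ii) and $\cy=\Sub D(T)$, while (iii) becomes the statement that an indecomposable $\la$-module has trivial or full torsion submodule. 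I expect the main obstacle to be precisely the counit/unit isomorphism step together with the bookkeeping of the four functors and the left/right $\la$-module conventions: the presentations must be arranged so that the relevant $\Hom$-, $\ten$- and $\operatorname{Tor}$-sequences stay exact, which is exactly where heredity, $\pd_H T\le1$ and membership in $\ct$, $\cF$, $\cx$ or $\cy$ enter, and tracking which natural transformation is invertible on which class is the delicate part.
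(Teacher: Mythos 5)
Your outline of (a) and of the equivalences in (b)(i)--(ii) is essentially the standard Brenner--Butler/Happel--Ringel argument; note that the paper itself states this theorem without any proof (it is classical background, attributed in the Notes to Section 2 to the references [BB], [HR]), so the only meaningful comparison is with that standard literature proof, which your sketch follows faithfully: the $\Ext$-characterization of $\Fac T$, the $\add T$-resolution $0\to T_1\to T_0\to X\to 0$ of objects of $\ct$, and the counit/unit comparison via the four functors are all correct and are exactly the right skeleton.

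There is, however, one genuine gap: part (iii) is never proved. You write that (iii) ``becomes the statement that an indecomposable $\la$-module has trivial or full torsion submodule,'' but that is just a restatement of what (iii) means, namely that $(\cx,\cy)$ is a \emph{splitting} torsion pair; for a general torsion pair an indecomposable module can perfectly well be a non-split extension of a torsionfree module by a torsion module. The splitting here is a theorem, not a formality, and it is precisely the point where heredity of $H$ enters beyond the uses you made of it: one must show $\Ext^1_{\la}(Y,X)=0$ for all $Y\in\cy$, $X\in\cx$, which forces every canonical sequence $0\to tN\to N\to N/tN\to 0$ to split. The standard argument writes $Y\simeq\Hom_H(T,M)$ with $M\in\ct$ and $X\simeq\Ext^1_H(T,N)$ with $N\in\cF$, and identifies $\Ext^1_{\la}(Y,X)$ with $\Ext^2_H(M,N)$ (for instance through the derived equivalence $\mathbf{R}\Hom_H(T,-)\colon {\bf D}^b(H)\to {\bf D}^b(\la)$, under which $Y$ corresponds to $M$ and $X$ to $N[1]$), and this vanishes because $H$ is hereditary; for tilting modules over non-hereditary algebras (iii) is false, so no argument that omits this step can be complete. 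A smaller but related gap: closure of $\cx$ under quotients/extensions and of $\cy$ under submodules/extensions does not by itself make $(\cx,\cy)$ a torsion pair --- you still need the orthogonality $\Hom_{\la}(\cx,\cy)=0$ (easy, via the adjunction $\Hom_{\la}(X,\Hom_H(T,GY))\simeq\Hom_H(T\otimes_{\la}X,GY)$ and $T\otimes_{\la}X=0$) and the existence of the canonical sequence for every $\la$-module, which requires the unit/counit analysis rather than just the closure properties you list.
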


An important homological property which can be proved for a tilted algebra is that it has global dimension at most 2.

\bigskip
\subsection{Definition and examples}
The question is now how to modify the category $\mod H$ to take care of the shortcomings
discussed in  Section 2.1. In addition we know from \secI that for cluster algebras given
by Dynkin quivers, the cluster variables are in one-one correspondence with the almost
positive roots. Hence there are $n$ more cluster variables than the number of
indecomposable modules for the Dynkin quiver, where $n$ is the number of vertices in the quiver. We now explain how to modify $\mod H$ in view of of the above remarks.

Let ${\bf D}^b(H)$ be the bounded derived category of the finite dimensional hereditary 
$k$-algebra $H=kQ$, where $Q$ is a finite connected quiver without oriented cycles. Then the
indecomposable objects are all isomorphic to stalk complexes. The translation $\tau$,
which in this case gives an equivalence from the category $\mod_{\cp}H$ whose
indecomposable $H$-modules are not projective to the category $\mod_{\ci}H$ whose
indecomposable $H$-modules are not injective, induces an equivalence $\tau\colon {\bf
D}^b(H)\to {\bf D}^b(H)$. Then $\tau(C)$ is the left hand term of the almost split
triangle with right hand term $C$. Note that under the embedding $\mod H\to {\bf D}(H)$,
almost split sequences go to almost split triangles (see \cite{ha1}).

Let now $F$ be the equivalence $\tau^{-1}[1] $ from ${\bf D}^b (H )$ to ${\bf D}^b(H)$,
where [1] is the shift functor. Then we define the cluster category $\cc_{H}$ to be the
orbit category ${\bf D}^b(H)/F$. The objects in $\cc_{H}$ are the same as those in ${\bf D}^b(H)$. If $A$ and $B$ are in ${\bf D}^b(H)$, then by definition we have $\Hom_{\cc_H}(A,B)=\oplus_{i\in\Z}\Hom_{{\bf D}^b(H)}(A,F^iB)$. Consider the \emph{fundamental
domain} of indecomposable objects given by $\ind H \vee \{P_i[1]; i=1,\cdots, n\}$, where
$P_1,\cdots, P_n$ are the nonisomorphic indecomposable projective $H$-modules. It is easy
to see that each indecomposable object in $\cc_H$ is isomorphic to exactly one of these
indecomposable objects. When $A$ and $B$ are chosen from this fundamental
domain, the formula for $\Hom_{\cc_{H}}(A,B)$ simplifies to $\Hom_{{\bf D}^b(H)}(A,B)\oplus\Hom_{{\bf D}^b(H)}(A,FB)$. We illustrate with the following.

\bigskip
\noindent {\bf Example:} Let $Q$ be the quiver $1\to 2\to 3$, and let $S_i$ and $P_i$ be
the simple and indecomposable projective $H$-modules corresponding to the vertex $i$,
where $H=kQ$. We then have the following AR-quiver for $H$, and for ${\bf D}^b(H)$

$$\xymatrix@C0.5cm@R0.5cm{
  &&& \bullet\ar@{-->}[dr]&& P_1\ar[dr]&& \bullet
\ar[dr]&&\makebox[0mm]{$S_2[1](=\tau^{-1}S_3[1])$} \ar[dr]&& \bullet &\\
  \ar@{..}[rr]&&\bullet \ar@{-->}[dr]\ar[ur]&& P_2 \ar[ur]\ar[dr] && {\begin{smallmatrix}
S_1\\S_2 \end{smallmatrix}}\ar@{-->}[ur] \ar[dr]&& \bullet\ar[ur]\ar[dr]&&
\bullet\ar[ur]\ar@{..}[rr]&&\\
  & \bullet \ar[ur]&& \makebox[0mm]{$P_3=S_3$} \ar[ur]&& S_2 \ar[ur]&& S_1\ar@{-->}[ur]&& \bullet\ar[ur]&&&
}$$

Then we have $$\Hom_{\cc_{H}}(S_1,S_3)=\Hom_{{\bf D}^b(H)}(S_1, S_3)\oplus\Hom_{{\bf
D}^b(H)}(S_1,\tau^{-1}S_3[1])= \Ext^1_{H}(S_1,S_2)\simeq k.$$

Note that considering again Example (b) in Section 2.1, we see that $\End_{\cc_{H}}(P_1\oplus
S_1\oplus S_3)^{\op}$ has indeed the quiver $\xymatrix@C0.5cm{
 \bullet \ar[r] &\bullet\ar[r]&\bullet\ar@/^/[ll]
}$, due to the extra maps from $S_1$ to $S_3$.

Also the problem about complements in Example (c) in Section 2.1 can now be solved, with an
appropriate notion of ``tilting'' objects. We have that $T=P_1[1]\oplus P_2\oplus P_3$ is
an object in $\cc_{H}$ with $\Ext^1_{\cc_{H}}(T,T)=0$.

\bigskip
We next give an example to show that for $\Hom_{\cc_H}(A,B)=\Hom_{{\bf
D}^b(H)}(A,B)\oplus\Hom_{{\bf D}^b(H)}(A,FB)$, where $A$ and $B$ are in the fundamental
domain, there can be nonzero maps in both summands.

\bigskip

\noindent {\bf Example:} Let $Q$ be the quiver
$$\xymatrix@C0.5cm@R0.5cm{
  &&3\\
2& 1\ar[l]\ar[ur]\ar[dr]\ar[r]& 4\\
&& 5 }$$ and $H= kQ$ the associated path algebra. Let $M$ be the indecomposable module
$\left(\begin{smallmatrix}
&&\\
&1&\\
2&&3\\
&&
\end{smallmatrix}\right)
$.  Then $M$ lies in a tube of rank two, and we have $\tau \left(\begin{smallmatrix}
&&\\
&1&\\
2&&3\\
&&
\end{smallmatrix}\right)
= \left(\begin{smallmatrix}
&&\\
&1&\\
4&&5\\
&&
\end{smallmatrix}\right)$. For computing $\Hom_{\cc_H}(M,M)$, we have
$\Hom_H(M,M)\neq 0$ and $\Hom_{{\bf D}^b(H)}(M,\tau^{-1}M[1])=\Ext^1_{H}(M,
\tau^{-1}M)\simeq D\Hom_H(\tau^{-2}M,M)\simeq D\Hom_H(M,M)\neq 0$.

\bigskip
The following properties of cluster categories will turn out to be important.

\begin{theorem}
\begin{enumerate}
\item[(a)] The cluster categories are triangulated categories, and the natural functor
from $\DBH$ to $\CC$ is triangulated.
\item[(b)] The cluster category $\cc_{H}$ has almost split triangles, and they are
induced by almost split triangles in ${\bf D}^b(H)$.
\item[(c)] For $A$, $B$ in $\cc_H$ we have a functorial isomorphism $D\Ext^1_{\cc_H}(A,B)\simeq \Ext^1_{\cc_H}(B,A)$.
\item[(d)] For $A$, $B$ in $\mod H$ we have $\Ext^1_{\cc_H}(A,B)\simeq\Ext^1_H(A,B)\oplus D\Ext^1_H(B,A)$.
\end{enumerate}
\end{theorem}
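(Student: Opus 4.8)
The plan is to prove the four statements about the cluster category $\cc_H = {\bf D}^b(H)/F$ in order, with each building on the structural machinery of the orbit category and the known properties of ${\bf D}^b(H)$. The key background facts I would use freely are: that ${\bf D}^b(H)$ is a triangulated category with Serre duality (giving Auslander--Reiten triangles with translation $\tau$), that the indecomposables of ${\bf D}^b(H)$ are stalk complexes, and that $F = \tau^{-1}[1]$ is an autoequivalence. Throughout I identify objects of $\cc_H$ with their representatives in the fundamental domain $\ind H \vee \{P_i[1]\}$, using the simplified $\Hom$-formula $\Hom_{\cc_H}(A,B) = \Hom_{{\bf D}^b(H)}(A,B) \oplus \Hom_{{\bf D}^b(H)}(A,FB)$.

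**Part (a): triangulated structure.** For this I would invoke Keller's theorem on orbit categories: if $\cd$ is a triangulated category and $G$ an autoequivalence commuting with the suspension, then under suitable hypotheses the orbit category $\cd/G$ is again triangulated with the induced suspension, and the projection $\cd \to \cd/G$ is a triangle functor. The essential point is that $F = \tau^{-1}[1]$ commutes with $[1]$ (since $\tau$ is a triangle equivalence, it commutes with the shift), so the construction applies. I would cite this rather than reproduce the (delicate) verification of the octahedral axiom; the honest statement is that this is precisely Keller's result and the whole content of (a) is checking that $\cc_H$ falls into the class he treats.

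**Parts (b), (c), (d): the AR and Serre-duality structure.** For (b), I would lift an almost split triangle $\tau C \to B \to C \to \tau C[1]$ in ${\bf D}^b(H)$ along the projection; the point is that the projection functor sends this to a triangle in $\cc_H$, and one checks that the defining lifting property of an almost split triangle survives because the extra $\Hom$-summand $\Hom(-, F(-))$ does not produce new split epimorphisms onto the indecomposable $C$. For (c), the Calabi--Yau property, I would compute directly from the orbit formula: writing $\Ext^1_{\cc_H}(A,B) = \Hom_{\cc_H}(A, B[1])$ and expanding via the two summands, then applying Serre duality in ${\bf D}^b(H)$ (in the form $D\Hom(X,Y) \simeq \Hom(Y, \tau X[1]) = \Hom(Y, FX[2]\cdots)$, being careful with the exact form $\tau \simeq \nu[-1]$ where $\nu$ is the Serre functor). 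The key identity is that $F = \tau^{-1}[1]$ is built exactly so that the Serre functor becomes $[2]$ on the orbit category, which is the 2-Calabi--Yau statement; (c) is the $i=1$ shadow of this. Finally (d) is a bookkeeping computation: expand $\Ext^1_{\cc_H}(A,B) = \Hom_{\cc_H}(A,B[1])$ for $A,B \in \mod H$ using the orbit formula into $\Hom_{{\bf D}^b(H)}(A,B[1]) \oplus \Hom_{{\bf D}^b(H)}(A, FB[1])$, identify the first summand with $\Ext^1_H(A,B)$, and identify the second via Serre duality with $D\Ext^1_H(B,A)$, using that $H$ is hereditary so no higher $\Ext$ terms appear.

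**Main obstacle.** The genuinely hard part is (a): establishing that the orbit category is triangulated is \emph{not} formal, and in fact requires the hypotheses of Keller's theorem (the lift to a triangulated hull / the algebraic nature of the category). I would flag that (a) rests on this external input rather than a self-contained argument. By contrast (c) and (d), once (a) is available, reduce to careful applications of Serre duality in ${\bf D}^b(H)$ and the heredity of $H$; the only delicacy there is tracking the shifts in $F = \tau^{-1}[1]$ and making sure cross-terms between the two orbit summands vanish (they do, for degree reasons, since $\Hom_{{\bf D}^b(H)}(X, Y[j]) = 0$ for $|j| \geq 2$ when $X, Y$ are modules over a hereditary algebra).
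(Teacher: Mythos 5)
Your overall route coincides with the paper's. The paper gives no self-contained proof of this theorem: it states that part (a) is ``highly nontrivial,'' defers it to Keller's theorem on triangulated orbit categories (its reference [Kel]), and takes (b)--(d) from [BMRRT]; your proposal does the same --- external input for (a), and for (b)--(d) the Serre-duality computations in ${\bf D}^b(H)$ that those sources use. Your emphasis that (a) is the genuinely hard part, and that commuting with the shift is nowhere near sufficient, matches the paper's own discussion (it points out that orbit categories of ${\bf D}^b(\la)$ for general finite dimensional $\la$ need not be triangulated, and that triangles in $\cc_H$ need not lift to ${\bf D}^b(H)$).

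There is, however, one concrete slip in your part (d). You expand $\Ext^1_{\cc_H}(A,B)=\Hom_{\cc_H}(A,B[1])$ as $\Hom_{{\bf D}^b(H)}(A,B[1])\oplus\Hom_{{\bf D}^b(H)}(A,FB[1])$ and identify the second summand with $D\Ext^1_H(B,A)$. But the two-term simplification of the orbit formula is only valid when both arguments lie in the fundamental domain, and $B[1]$ does not. In fact $\Hom_{{\bf D}^b(H)}(A,FB[1])=\Hom_{{\bf D}^b(H)}(A,\tau^{-1}B[2])$ \emph{vanishes} for modules $A,B$: when $B$ is non-injective this is $\Ext^2_H(A,\tau^{-1}B)=0$ since $H$ is hereditary, and when $B$ is injective $\tau^{-1}B$ is a shifted projective, so the group is an even higher $\Ext$. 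The summand that actually carries $D\Ext^1_H(B,A)$ is the $F^{-1}$-term: $\Hom_{{\bf D}^b(H)}(A,F^{-1}B[1])=\Hom_{{\bf D}^b(H)}(A,\tau B)\simeq D\Ext^1_H(B,A)$ by Serre duality. Equivalently, note that $B[1]\simeq \tau B$ in $\cc_H$ (since $F\simeq\operatorname{id}$ there), and $\tau B$ --- or $P[1]$ in case $B=P$ is projective --- does lie in the fundamental domain, so applying the two-term formula to $\Hom_{\cc_H}(A,\tau B)$ yields exactly $D\Ext^1_H(B,A)\oplus\Ext^1_H(A,B)$. This is an indexing repair rather than a change of method --- the degree-vanishing argument you invoke at the end is precisely the tool needed --- but as literally written, the identification in (d) attributes the dual $\Ext$ term to a summand that is zero, so that step would fail.
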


Note that part (a) is highly nontrivial, and it is not true in general that  orbit
categories of ${\bf D}^b(\la)$ for any finite dimensional algebra $\la$ are triangulated.

While any almost split triangle in $\cc_H$ is induced by an almost split triangle in
${\bf D}^b(H)$, it is not true that any triangle in $\cc_H$ comes from a triangle in
${\bf D}^b(H)$. This is for example not the case for the triangle induced by a map
$(f,g)\colon M\to M$ in the previous example, where $f$ and $g$ are nonzero. This in one
reason why it is difficult to show that $\cc_H$ is triangulated.

There are orbit categories of ${\bf D}^b(H)$ which were previously known to be
triangulated, namely the stable categories $\ul{\mod}\la$ for selfinjective algebras of
finite type in \cite{rie}, which are triangulated by \cite{ha1}. The cluster categories are not of this form, but this still
gave an indication that the same thing might be true for cluster categories. In addition
the orbit categories  ${\bf D}^b(H)/[2]$ were
known to be triangulated \cite{px}.
\bigskip
\subsection{Cluster-tilting objects}
We need to define the objects in $\cc_{H}$ which should replace tilting $H$-modules. It
would be desirable if the tilting modules when viewed in $\cc_{H}$ would belong to this
class.

It turns out to be natural to consider the condition of \textit{maximal rigid}, that is, $\Ext^1_{\cc_{H}}(T,T)=0$ and $T$ is maximal with this property. The relationship to tilting
modules is given by the following. Note that derived equivalent hereditary algebras have
equivalent cluster categories.

\begin{theorem}\label{cluster-tilting objects}
  The maximal rigid objects in the cluster category $\cc_{H}$ are exactly those
coming from tilting modules over some hereditary algebra $H'$ derived equivalent to $H$.
\end{theorem}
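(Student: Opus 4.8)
The plan is to prove the two inclusions separately, in each case reducing to the algebra $H$ itself by using the equivalence $\cc_{H'}\simeq\cc_H$ that holds whenever $H'$ is derived equivalent to $H$ (noted just before the statement). Throughout I would write an object of $\cc_H$, via the fundamental domain $\ind H\vee\{P_i[1]\}$, as $\overline{T}\oplus\bigoplus_{i\in S}P_i[1]$ with $\overline{T}\in\mod H$ and $S\subseteq\{1,\dots,n\}$.

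For the inclusion that every tilting module is maximal rigid, let $T$ be a tilting $H$-module. Property (d) gives $\Ext^1_{\cc_H}(T,T)\cong\Ext^1_H(T,T)\oplus D\Ext^1_H(T,T)=0$, so $T$ is rigid. To check maximality I would test an indecomposable $X$ with $\Ext^1_{\cc_H}(T\oplus X,T\oplus X)=0$ and split into the two fundamental-domain cases. If $X\in\mod H$ then (d) shows $T\oplus X$ is already rigid in $\mod H$, so $X\in\add T$ because a tilting module is a maximal rigid $H$-module (any rigid module is a summand of a tilting module by Bongartz). If $X=P_j[1]$, then using that $[1]$ is an autoequivalence and the simplified Hom-formula on the fundamental domain, $\Ext^1_{\cc_H}(P_j[1],T)\cong\Hom_{\cc_H}(P_j,T)\cong\Hom_H(P_j,T)=e_jT$ (the second summand of the Hom-formula vanishes since $P_j$ is projective), which is nonzero because tilting modules are sincere (apply $\Hom_H(P_j,-)=e_j(-)$ to a sequence $0\to H\to T^0\to T^1\to 0$ with $T^0,T^1\in\add T$). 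Hence no $P_j[1]$ can be adjoined, $T$ is maximal rigid, and transporting along $\cc_{H'}\simeq\cc_H$ covers all tilting modules over hereditary $H'$ derived equivalent to $H$.

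For the converse, let $T=\overline{T}\oplus\bigoplus_{i\in S}P_i[1]$ be maximal rigid. The same computation gives $\Ext^1_{\cc_H}(P_i[1],\overline{T})\cong e_i\overline{T}$, so rigidity forces $e_i\overline{T}=0$ for all $i\in S$; that is, $\overline{T}$ is supported off $S$. If $S=\varnothing$ we finish immediately: $T=\overline{T}$ is rigid in $\mod H$ by (d), and maximality in $\cc_H$ restricts, again through (d), to maximality in $\mod H$, so $\overline{T}$ is a maximal rigid and hence tilting $H$-module. If $S\neq\varnothing$, I would argue by reducing $|S|$ using BGP reflection at a source or sink, which---being the module-theoretic shadow of mutation emphasized in Section~1---produces a hereditary $H'=kQ'$ derived equivalent to $H$ and an equivalence $\cc_H\simeq\cc_{H'}$. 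Under such a reflection the shifted projective $P_i[1]$ at the reflected vertex is carried to an honest $H'$-module, while the support condition $e_i\overline{T}=0$ guarantees that the simple $S_i$, which the reflection would otherwise send to a new shifted projective, is absent from $\overline{T}$; iterating until no shifted projective remains lands $T$, over some hereditary algebra derived equivalent to $H$, in the case $S=\varnothing$ already treated.

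The hard part will be organizing this last reduction. The genuine difficulty is that the vertex one needs to reflect at need not be a source or sink of the current quiver, so one must interleave auxiliary reflections and prove that the procedure terminates with all shifted projectives removed and without a net increase along the way. This is the cluster-category incarnation of the dichotomy, established earlier for almost complete tilting modules, between having two complements and having one: in the sincere (two-complement) case the exchange stays inside $\mod H'$, whereas the non-sincere (one-complement) case is exactly where one is forced to leave $\mod H'$ and pass to a reflected algebra, with the exchange sequence for the two complements controlling the comparison. Making this dichotomy precise in $\cc_H$---tracking the reflection equivalence on the fundamental domain and using the support conditions $e_i\overline{T}=0$ to control the shifted-projective summands---is the technical heart of the proof.
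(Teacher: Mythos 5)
A preliminary remark: the paper itself states this theorem without proof (it is a survey; the result is quoted from \cite{bmrrt}, as the Notes at the end of Section 2 indicate), so your proposal can only be judged on its own merits, not against an argument in the text. On those merits, your first direction is correct and complete: rigidity of a tilting module $T$ in $\cc_H$ follows from Theorem 2.4(d); a module complement is excluded by Bongartz completion together with the fact that tilting modules have exactly $n$ indecomposable summands; a shifted projective is excluded by the computation $\Ext^1_{\cc_H}(P_j[1],T)\cong\Hom_H(P_j,T)=e_jT\neq 0$, using sincerity of tilting modules (your argument for sincerity is fine); and transport along the triangle equivalence $\cc_{H'}\simeq\cc_H$ handles derived equivalent algebras. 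The opening moves of the converse are also right: $e_i\overline{T}=0$ for $i\in S$, and the case $S=\emptyset$ via Bongartz.

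The gap is the case $S\neq\emptyset$, which you correctly flag as the technical heart but do not close --- and the specific organization you propose cannot work. Take $Q:1\to 2\to 3$ and $T=S_1\oplus P_3\oplus P_2[1]$; here $P_3=S_3$, $S_1=I_1$, and one checks that $T$ is maximal rigid with $S=\{2\}$. Vertex $2$ is neither a sink nor a source, so your procedure must begin with an auxiliary reflection, either at the sink $3$ or at the source $1$. But reflection at the sink $3$ carries the summand $S_3=P_3$ of $\overline{T}$ to a shifted projective over the reflected algebra (the simple projective at a sink is exchanged with the shifted projective there), and reflection at the source $1$ does the same to $S_1=I_1$. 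Thus every admissible first move increases the number of shifted summands from $1$ to $2$: the invariant ``without a net increase along the way'' fails at the very first step, so no argument maintaining it can even start. (The process does terminate in this example --- reflect at $3$, then at $2$, then at $3$ --- but only after going up, so termination requires a different measure.) What is needed here is precisely the proposition of \cite{bmrrt} that every rigid object of $\cc_H$ is induced by a rigid module over some hereditary algebra derived equivalent to $H$, whose proof is organized differently from your sketch; your proposal is therefore incomplete at exactly the point where the theorem has content.
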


An object $T$ in $\cc_H$ is called \textit{cluster-tilting} if $\Ext^1_{\cc_H}(T,T)=0$ and $\Ext^1_{\cc_H}(T,M)=0$ implies that $M$ is in $\add T$. Clearly any cluster-tilting object is maximal rigid. But for cluster categories these concepts actually coincide.

\begin{proposition}
  An object $T$ in the cluster category $\CC$ is cluster-tilting  if and only if it
is maximal rigid.
\end{proposition}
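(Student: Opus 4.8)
The plan is to prove the nontrivial implication, maximal rigid $\Rightarrow$ cluster-tilting, since the converse is immediate from the definitions. So I would start with a maximal rigid object $T$ and an indecomposable $M$ with $\Ext^1_{\cc_H}(T,M)=0$, aiming to show $M\in\add T$. My first step would be to invoke the Calabi--Yau duality $D\Ext^1_{\cc_H}(A,B)\cong\Ext^1_{\cc_H}(B,A)$ to deduce $\Ext^1_{\cc_H}(M,T)=0$ as well, so that the only obstruction to $T\oplus M$ being rigid---and hence, by maximality of $T$, to $M\in\add T$---is the possible nonvanishing of $\Ext^1_{\cc_H}(M,M)$. The hard part will be controlling these self-extensions: in a general $2$-Calabi--Yau category the two notions genuinely differ, and $\cc_H$ has non-rigid indecomposables when $H$ is not Dynkin, so this step cannot be settled formally and must use the special structure of cluster categories.

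The decisive input I would use is the classification of maximal rigid objects (Theorem~\ref{cluster-tilting objects}): $T$ is the image of a tilting module over a hereditary algebra $H'$ derived equivalent to $H$. Since derived equivalent hereditary algebras have equivalent cluster categories, I would pass to $\cc_{H'}$ and treat $T$ as a genuine tilting $H'$-module. The task then becomes: show that a tilting module, seen in $\cc_{H'}$, is cluster-tilting. I would verify this by testing the vanishing of $\Ext^1_{\cc_{H'}}(T,-)$ on each indecomposable of the fundamental domain $\ind H'\vee\{P_j[1]\}$.

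For an $H'$-module $M$ I would use $\Ext^1_{\cc_{H'}}(T,M)\cong\Ext^1_{H'}(T,M)\oplus D\Ext^1_{H'}(M,T)$ to rephrase the hypothesis as $\Ext^1_{H'}(T,M)=0$ together with $\Ext^1_{H'}(M,T)=0$. The first condition puts $M$ in the torsion class $\ct=\Fac T$ of the tilting torsion pair. For the second I would resolve an arbitrary $X\in\ct$ as $0\to Y\to T^{k}\to X\to 0$ and apply $\Hom_{H'}(M,-)$; since $\Ext^1_{H'}(M,T^{k})=0$ and $\Ext^2_{H'}=0$ by hereditarity, this yields $\Ext^1_{H'}(M,X)=0$ for every $X\in\ct$. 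Thus $M$ is Ext-projective in $\ct$, and as the Ext-projectives of $\Fac T$ are exactly $\add T$, I conclude $M\in\add T$. The appealing feature of this route is that it forces membership directly, so I never have to establish $\Ext^1(M,M)=0$ in isolation.

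For $M=P_j[1]$ I would instead show that the hypothesis can never hold, consistent with $P_j[1]\notin\add T$. Using the shift equivalence and Calabi--Yau duality, $\Ext^1_{\cc_{H'}}(T,P_j[1])\cong D\Ext^1_{\cc_{H'}}(P_j[1],T)\cong D\Hom_{\cc_{H'}}(P_j,T)$, and this last group contains $\Hom_{H'}(P_j,T)$ as a direct summand. Because a tilting module is faithful, hence sincere, $\Hom_{H'}(P_j,T)\neq 0$ for every $j$, so $\Ext^1_{\cc_{H'}}(T,P_j[1])\neq 0$. This exhausts the fundamental domain and identifies the vanishing locus of $\Ext^1_{\cc_{H'}}(T,-)$ with $\add T$, proving that $T$ is cluster-tilting. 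To summarize, the genuine obstacle is the self-extension problem of the first step, and the whole strategy hinges on Theorem~\ref{cluster-tilting objects} to reduce to honest tilting modules, after which the tilting torsion theory does the remaining work.
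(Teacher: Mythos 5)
Your proposal is correct, and it shares the paper's two key inputs: the reduction via Theorem~\ref{cluster-tilting objects} to the case where $T$ is an honest tilting module, and the formula $\Ext^1_{\CC}(A,B)\simeq\Ext^1_H(A,B)\oplus D\Ext^1_H(B,A)$ for modules. But your case decomposition is genuinely different from the paper's. The paper splits on whether $\Ext^1_{\CC}(M,M)$ vanishes: if $M$ is rigid, then $T\oplus M$ is rigid and \emph{maximality} of $T$ forces $M\in\add T$; if $M$ is not rigid, then $M$ cannot be a shifted projective (since each $P_j[1]$ is rigid), so $M$ is a module and tilting theory gives $M\in\Fac T\cap\Sub T=\add T$. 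This neatly sidesteps any separate treatment of the objects $P_j[1]$. You instead split on the fundamental domain: for modules you show $M$ is Ext-projective in $\Fac T$ (equivalent to the paper's $\Fac T\cap\Sub T=\add T$, just phrased via Ext-projectivity), and for $M=P_j[1]$ you rule out the hypothesis entirely using $\Ext^1_{\CC}(T,P_j[1])\simeq D\Hom_{\CC}(P_j,T)\neq 0$, which follows from sincerity of tilting modules. The trade-off: your argument never invokes maximal rigidity after the reduction, so it actually proves the sharper statement that \emph{every} tilting module becomes a cluster-tilting object in $\CC$, at the cost of the extra fundamental-domain computation; the paper's argument is shorter because the non-rigid/rigid dichotomy lets maximality absorb the shifted projectives, but it uses the maximality hypothesis twice (once in Theorem~\ref{cluster-tilting objects}, once directly).
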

\begin{proof}
Assume that $T$ is maximal rigid. By Theorem \ref{cluster-tilting objects} we can assume that
$T$ is a tilting $H$-module. Assume that $\Ext^1_{\CC}(T,M)=0$ when $M$ is indecomposable. If $\Ext^1_{\CC}(M,M)=0$, then $M$ is in $\add T$. If $\Ext^1_{\CC}(M,M)\neq 0$, we can assume that $M$ is an $H$-module since for all $P[1]$ with $P$ indecomposable projective we have $\Ext^1_{\CC}(P[1],P[1])=0$. Since then 
$\Ext_{H}^1(T,M)=0=\Ext_{H}^1(M,T)$ by Theorem 2.4, tilting theory gives that $M$ is in $\Fac T\cap
\Sub T=\add T$.
\end{proof}

In \cite{bmrrt} the term cluster-tilting object was used for the above concept of maximal rigid. What is here called cluster-tilting object corresponds to a finite set of indecomposable objects being an $\Ext$-configuration in the sense of \cite{bmrrt}. This concept was motivated by the $\Hom$-configurations of Riedtmann. The above definition of cluster-tilting object was used in a more general context in \cite{kr1}. It is closely related to Iyama's definition of maximal
1-orthogonal modules, which he introduced in connection with his
generalizations of the theory of almost split sequences, originally in the abelian case,
where the notion was essential \cite{i1}\cite{i2}.

We say that $\ol{T}$ is an \emph{almost complete cluster-tilting object} if there
is some indecomposable object $M$ not in $\add \ol T$ such that $\ol{T}\oplus M$ is a
cluster-tilting object. Then $M$ is said to be a complement of $\ol{T}$. It follows from Theorem \ref{cluster-tilting objects} that all cluster-tilting objects in
a given $\CC$ for $H=kQ$ have the same number of nonisomorphic indecomposable summands as
the vertices in the quiver $Q$. Also we now get a better result on exchanging
indecomposable summands of cluster-tilting objects, more closely related to clusters.

\begin{theorem}
  Let $\ol{T}$ be an almost complete cluster-tilting object in $\CC$. Then $\ol{T}$
has exactly two nonisomorphic complements in $\CC$.
\end{theorem}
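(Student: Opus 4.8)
The plan is to reduce the problem to ordinary tilting theory and then to account separately for the two kinds of indecomposable objects of $\CC$, namely the $H$-modules and the shifted projectives $P_i[1]$. First I would use the hypothesis that $\ol T$ is almost complete to fix one complement $M$, so that $T_0=\ol T\oplus M$ is cluster-tilting, hence maximal rigid by the preceding Proposition. By Theorem~\ref{cluster-tilting objects} the object $T_0$ is the image of a tilting module over some hereditary algebra $H'$ derived equivalent to $H$; since derived equivalent hereditary algebras have equivalent cluster categories, I may transport everything along $\CC\simeq\cc_{H'}$ and simply assume that $T_0$ is a tilting $H$-module. Then all summands of $T_0$ lie in $\ind H$, so $\ol T$ is an almost complete tilting $H$-module (with $n-1$ indecomposable summands, all genuine $H$-modules) having $M$ as one module complement.

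Next I would classify the complements of $\ol T$ in $\CC$ according to whether the new summand is a module or a shifted projective. If $N$ is an indecomposable $H$-module with $N\notin\add\ol T$, then by Theorem~2.4(d) the vanishing $\Ext^1_{\CC}(\ol T\oplus N,\ol T\oplus N)=0$ is equivalent to $\Ext^1_H(\ol T\oplus N,\ol T\oplus N)=0$; combined with Theorem~\ref{cluster-tilting objects} and the preceding Proposition, the condition that $\ol T\oplus N$ be cluster-tilting is equivalent to $\ol T\oplus N$ being a tilting $H$-module. Thus the module complements of $\ol T$ in $\CC$ are exactly its complements as an almost complete tilting $H$-module. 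For the shifted projectives, a short computation with the fundamental-domain formula $\Hom_{\CC}(A,B)=\Hom_{\DBH}(A,B)\oplus\Hom_{\DBH}(A,FB)$, using $F=\tau^{-1}[1]$ and that $H$ is hereditary, gives $\Ext^1_{\CC}(P_i[1],\ol T)\cong\Hom_H(P_i,\ol T)$, with $\Ext^1_{\CC}(\ol T,P_i[1])$ its dual by the Calabi--Yau property of Theorem~2.4(c); since $\Ext^1_{\CC}(P_i[1],P_i[1])=0$ always, the object $\ol T\oplus P_i[1]$ is rigid precisely when vertex $i$ lies outside the support of $\ol T$.

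Now I would finish by the dichotomy sincere versus non-sincere, invoking the classical count for almost complete tilting modules (Theorem~2.1: exactly two complements when $\ol T$ is sincere and exactly one otherwise). If $\ol T$ is sincere it has exactly two module complements and, by the support criterion above, no shifted-projective complement, giving two. If $\ol T$ is not sincere it has exactly one module complement $M$; to produce the second I would note that the $n-1$ dimension vectors of the indecomposable summands of the rigid module $\ol T$ are linearly independent in $K_0(H)\cong\Z^n$, so the support of $\ol T$ can omit at most one vertex, and being non-sincere it omits exactly one. This yields exactly one shifted-projective complement $P_i[1]$. Either way $\ol T$ has exactly two complements, and in the non-sincere case these are visibly distinct, a module and a shifted projective.

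I expect the main obstacle to be the reduction step together with the identification of module complements in $\CC$ with tilting complements: one must make sure that Theorem~\ref{cluster-tilting objects} can be applied so that the \emph{same} object $\ol T$ becomes an almost complete tilting module, and that passing along the derived equivalence does not alter the set of complements. The linear-independence argument bounding the number of omitted vertices is the crucial quantitative input that makes the sincere and non-sincere cases balance to the single answer two.
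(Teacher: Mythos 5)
Your proof is correct, and its ``existence'' half is essentially the paper's own argument: the same reduction (via Theorem 2.5 and the equivalence of cluster categories of derived equivalent hereditary algebras) to an almost complete tilting $H$-module, the same sincere/non-sincere dichotomy using Theorem 2.1, and the same construction of the extra complement $P[1]$ from the computation $\Ext^1_{\CC}(P[1],\ol{T})\simeq\Hom_H(P,\ol{T})=0$ at an omitted vertex. Where you genuinely go beyond the paper is the upper bound: the paper's proof explicitly says ``We only show that there are at least two complements,'' whereas you also prove ``at most two.'' You do this by (i) observing that every indecomposable of $\CC$ lies in the fundamental domain $\ind H\vee\{P_i[1]\}$, so every complement is either a module or a shifted projective; (ii) identifying the module complements in $\CC$ with the tilting-module complements in $\mod H$ via Theorem 2.4(d); and (iii) the quantitative step that a non-sincere rigid module with $n-1$ nonisomorphic indecomposable summands omits \emph{exactly} one vertex, because the classes of the summands are linearly independent in $K_0(H)\simeq\Z^n$, so at most one coordinate hyperplane can contain them all. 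That linear-independence fact is classical tilting theory (it follows, for instance, from the fact that rigid $H$-modules are determined by their dimension vectors) but is not among the results quoted in the paper, so you should cite it explicitly. One small shared caveat: both your argument and the paper's tacitly use that a rigid object of $\CC$ with $n$ nonisomorphic indecomposable summands is automatically cluster-tilting (needed to conclude that $\ol{T}\oplus P_i[1]$ is a complement rather than merely rigid); for the module case you supply this via Bongartz-style completion and Theorem 2.5, and the shifted-projective case needs the analogous fact from the original literature on cluster categories. With those two background facts acknowledged, your argument is complete and strictly stronger than the proof given in the paper.
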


\begin{proof}
We only show that there are at least two complements. So let $\ol T$ be an almost complete tilting $H$-module. If $\ol T$ is sincere, it has exactly two complements in $\mod H$, and by Theorem 2.4(b) these are also complements in $\CC$. If $\ol T$ is not sincere, there is exactly one complement in $\mod H$, and this is a complement also in $\CC$. There is some simple $H$-module $S$ which is
not a composition factor of $\ol{T}$, so that $\Hom_H(P, \ol{T})=0$ where $P$ is the
projective cover of $S$. Hence we have
$\Ext^1_{\CC}(P[1],\ol{T})=\Hom_{\CC}(P,\ol{T})=0$, and so $P[1]$ is a complement.

\end{proof}

There is a graph, called the \textit{cluster-tilting graph}, 
where the vertices are the (nonisomorphic) cluster-tilting objects and there is an edge
between two vertices if the corresponding cluster-tilting objects have a common almost
complete cluster-tilting summand. Then we have the following important result.

\begin{theorem}
For a cluster category $\CC$ the cluster-tilting graph is connected.
\end{theorem}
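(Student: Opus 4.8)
The plan is to prove connectivity by reducing to the known connectedness of the classical tilting graph of the hereditary algebra $H=kQ$, together with a reduction showing that every cluster-tilting object can be mutated to (the image of) a tilting $H$-module. Throughout I write a cluster-tilting object in the fundamental domain as $T=M\oplus\bigoplus_{i\in S}P_i[1]$, where $M$ is a direct sum of indecomposable $H$-modules and $S\subseteq\{1,\dots,n\}$; thus $T$ is the image of a tilting $H$-module precisely when $S=\emptyset$. Let $\cg$ denote the cluster-tilting graph.

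First I would show that the cluster-tilting objects with $S=\emptyset$ lie in a single connected component of $\cg$. It is known that the tilting graph of a hereditary algebra $H$ is connected, where two tilting modules are joined when they share an almost complete tilting module $\ol T$ that is sincere, so that by Theorem 2.1 it has exactly two complements in $\mod H$. Each such edge is also an edge of $\cg$: by Theorem 2.7 the object $\ol T$ has exactly two complements in $\CC$, and when $\ol T$ is sincere these are precisely the two module complements, as in the proof of Theorem 2.7. Hence the connected tilting graph of $H$ embeds into $\cg$, so all tilting $H$-modules occupy one component.

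The heart of the argument is then to show that every cluster-tilting object $T$ is connected in $\cg$ to one with $S=\emptyset$, which I would do by induction on $|S|$. If $|S|\ge 1$, choose $i\in S$ and form the almost complete cluster-tilting object $\ol T$ obtained by deleting the summand $P_i[1]$; by Theorem 2.7 it has a second complement $M^{*}\ne P_i[1]$, giving an edge of $\cg$ from $T$ to $\ol T\oplus M^{*}$. The aim is to arrange this exchange so that the number of shifted-projective summands strictly decreases. To control $M^{*}$ I would use the exchange triangles $M^{*}\to B\to P_i[1]$ and $P_i[1]\to B'\to M^{*}$ with $B,B'\in\add\ol T$ (the cluster-category analogue of Theorem 2.2) to read off that, for a suitably extremal choice of $i$, the replacement $M^{*}$ is a genuine $H$-module. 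Concretely I would exploit the self-equivalences $\CC\simeq\cc_{kQ'}$ induced by BGP reflections, which as recalled in Section~1 realize FZ-mutation at sinks and sources: reflecting $Q$ so that the vertex carrying $P_i[1]$ becomes a sink or source turns $P_i[1]$ into a module and thereby lowers $|S|$.

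The main obstacle is exactly this reduction step: guaranteeing that one can always perform a mutation removing a shifted-projective summand without reintroducing others, so that the induction on $|S|$ terminates at a tilting $H$-module. Establishing it requires a careful analysis of the exchange triangles of Theorem 2.7 inside the fundamental domain, and of how the reflection equivalences act on the objects $P_i[1]$; once this is in hand, the base case and the embedding of the tilting graph in Step~A are comparatively routine, given Theorem~\ref{cluster-tilting objects} and Theorem 2.7.
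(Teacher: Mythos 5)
Your plan has the two halves of the problem exactly interchanged: the step you defer as ``the main obstacle'' has a two-line solution from results already quoted in the paper, while the step you treat as known and ``comparatively routine'' is false. The assertion in Step~A that the tilting graph of a hereditary algebra is connected fails already for the Kronecker algebra $H=kQ$, $Q\colon\xymatrix@C0.5cm{1\ar@<0.5ex>[r]\ar@<-0.5ex>[r]& 2}$. There the tilting $H$-modules are exactly the pairs of consecutive indecomposable preprojectives and the pairs of consecutive indecomposable preinjectives: regular summands are impossible since every regular indecomposable $R$ lies in a homogeneous tube, so $\Ext^1_H(R,R)\simeq D\Hom_H(R,\tau R)=D\Hom_H(R,R)\neq 0$, and a pair consisting of a preprojective and a preinjective module is never rigid (an easy Euler form computation, using that $\Hom_H(\text{preinjective},\text{preprojective})=0$). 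Since an edge of the tilting graph requires a common indecomposable summand, no edge joins a preprojective pair to a preinjective pair, and the tilting graph of $\mod H$ has two connected components. In the cluster category $\CC$ the two families do get joined, but only through cluster-tilting objects containing shifted projectives, for instance (taking $2$ to be the sink, so that $P_2$ and $I_1$ are simple)
\[
I_2\oplus I_1 \;\text{---}\; I_1\oplus P_2[1] \;\text{---}\; P_2[1]\oplus P_1[1] \;\text{---}\; P_1[1]\oplus P_2 \;\text{---}\; P_2\oplus P_1 .
\]
So your overall strategy, namely first mutating all summands $P_i[1]$ away and then travelling inside the subgraph of tilting $H$-modules, cannot prove the theorem: that subgraph is genuinely disconnected, and the objects $P_i[1]$ are not a nuisance to be removed but the indispensable bridges. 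This defect of $\mod H$, already visible in Theorem 2.1(b), is precisely one of the shortcomings the cluster category was designed to repair; note also that the paper states Theorem 2.8 without proof, the result being taken from \cite{bmrrt}, and any correct argument must confront exactly this point, which is where the real work lies.

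By contrast, the reduction step you single out as the heart of the matter is immediate. Let $T=M\oplus\bigoplus_{i\in S}P_i[1]$ be cluster-tilting, let $i\in S$, and let $M^*$ be the second complement of $\ol T=T/P_i[1]$ provided by Theorem 2.7. By Theorem 2.10 (or simply because the exchange triangles of Theorem 2.9 are non-split) we have $\Ext^1_{\CC}(P_i[1],M^*)\neq 0$, whereas for any two indecomposable projectives
\[
\Ext^1_{\CC}(P_i[1],P_j[1])\simeq \Ext^1_{\CC}(P_i,P_j)\simeq \Ext^1_H(P_i,P_j)\oplus D\Ext^1_H(P_j,P_i)=0
\]
by Theorem 2.4(d). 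Since every indecomposable object of $\CC$ is either an $H$-module or some $P_j[1]$, the complement $M^*$ is necessarily a module, so each such mutation strictly decreases $|S|$ and your induction terminates at a tilting $H$-module; no extremal choice of $i$ and no analysis of BGP reflection equivalences is needed. Thus the reduction is fine, but the proposal collapses at Step~A, and repairing it requires an argument that connects tilting $H$-modules to one another \emph{through} the shifted projectives rather than after eliminating them.
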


\bigskip
\subsection{Exchange pairs}
Let $\ol{T}$ be an almost complete cluster-tilting object in a cluster category $\CC$,
and let $M$ and $M^*$ be the nonisomorphic complements for $\ol{T}$. We shall now
investigate the relationship between  $M$ and $M^*$.

We have the following connection.

\begin{theorem}
  Let the notation be as above. Then there exist triangles $M^*\xto{f}B\xto{g}M\to$
and $M\xto{s}B'\xto{t}M^*\to $, where $g\colon B\to M$ and $t\colon B'\to M^*$ are
minimal right $\add \ol{T}$-approximations and $f\colon M^*\to B$ and $s\colon M\to B'$
are minimal left $\add \ol{T}$-approximations.
\end{theorem}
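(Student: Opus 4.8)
The plan is to construct the first triangle out of a minimal right $\add\ol T$-approximation of $M$, to identify its cocone with $M^*$, and then to obtain the second triangle by the symmetry built into the $2$-Calabi--Yau structure. Since $\CC$ is $\Hom$-finite and Krull--Schmidt and $\ol T$ has only finitely many indecomposable summands, $\add\ol T$ is functorially finite, so a minimal right $\add\ol T$-approximation $g\colon B\to M$ exists with $B\in\add\ol T$; because $M\notin\add\ol T$, the map $g$ is not a split epimorphism. Completing $g$ to a triangle in $\CC$ (triangulated by Theorem 2.4(a)) gives $X\xto{f}B\xto{g}M\xto{w}X[1]$, and everything reduces to showing $X\cong M^*$ together with the approximation properties of $f$ and $g$.

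First I would verify that $\ol T\oplus X$ is rigid and $\Ext$-orthogonal. Applying $\Hom_{\CC}(\ol T,-)$ to the triangle, the approximation property makes $\Hom_\CC(\ol T,B)\to\Hom_\CC(\ol T,M)$ surjective while $\Ext^1_\CC(\ol T,B)=0$ by rigidity of $\ol T$, so the long exact sequence forces $\Ext^1_\CC(\ol T,X)=0$; the $2$-Calabi--Yau isomorphism of Theorem 2.4(c) then gives $\Ext^1_\CC(X,\ol T)=0$ as well. Dually, applying $\Hom_\CC(-,\ol T)$ and using $\Ext^1_\CC(M,\ol T)=0$ (since $\ol T\oplus M$ is rigid) shows $f\colon X\to B$ is a left $\add\ol T$-approximation, its minimality following from right-minimality of $g$ once the triangle is reduced so that $f$ lies in the radical.

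The heart of the argument, and the step I expect to be the \emph{main obstacle}, is the rigidity $\Ext^1_\CC(X,X)=0$ with $X$ indecomposable. The two long exact sequences only yield $\Ext^1_\CC(X,X)\cong\operatorname{coker}\bigl(\Hom_\CC(X,B)\to\Hom_\CC(X,M)\bigr)$ on one side and an injection $\Ext^1_\CC(X,X)\hookrightarrow D\Hom_\CC(X,M)$ on the other (via $2$-CY), so the vanishing is not formal. I would extract it from the minimality of $g$ together with the fact that $T=\ol T\oplus M$ is itself cluster-tilting: every object of $\CC$, in particular $X$, sits in a triangle $T_1\to T_0\to X\to T_1[1]$ with $T_0,T_1\in\add T$, and pushing such a resolution through the $2$-Calabi--Yau duality should show that every map $X\to M$ factors through $g$, killing the cokernel. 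Granting this, $\ol T\oplus X$ is maximal rigid, hence cluster-tilting by Proposition 2.6, with $X$ indecomposable and $X\notin\add\ol T$. Moreover $\Ext^1_\CC(M,X)\cong\operatorname{coker}\bigl(\Hom_\CC(M,B)\to\End_\CC(M)\bigr)\neq0$, since $1_M$ does not factor through $g$, whereas $\Ext^1_\CC(M,M)=0$; thus $X\not\cong M$, so $X$ is a genuine second complement and, by the uniqueness in Theorem 2.7, $X\cong M^*$.

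Finally, the second triangle $M\xto{s}B'\xto{t}M^*\to$ is produced by running the identical construction with the roles of $M$ and $M^*$ interchanged: a minimal right $\add\ol T$-approximation $t\colon B'\to M^*$ has cocone the unique other complement, namely $M$, and the approximation and minimality statements for $s$ and $t$ follow exactly as before. Equivalently, it is the dual of the first triangle under Theorem 2.4(c), which matches $\Ext^1_\CC(M,M^*)$ with $\Ext^1_\CC(M^*,M)$. In the special case where $\ol T\oplus M$ comes from a sincere almost complete tilting module, both triangles are simply the images in $\CC$ of the exchange sequences of Theorem 2.2, which provides a useful check on the construction.
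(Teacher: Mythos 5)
The paper itself offers no proof of this theorem: it is stated as survey material, followed immediately by an example, with the Notes attributing it to [BMRRT]. So your proposal can only be measured against the route the paper's framework suggests and against the published proofs. Your skeleton --- complete a minimal right $\add\ol{T}$-approximation $g\colon B\to M$ to a triangle $X\to B\xto{g}M\xto{w}X[1]$ and identify $X$ with $M^*$ --- is the standard intrinsic approach (later formalized by Iyama--Yoshino), and the steps you actually carry out are correct: $\Ext^1_{\CC}(\ol{T},X)=0$ from the approximation property, $\Ext^1_{\CC}(X,\ol{T})=0$ by Theorem 2.4(c), the left-approximation property of $f$ from $\Ext^1_{\CC}(M,\ol{T})=0$, non-splitness of the triangle, and $X\not\cong M$ via $\Ext^1_{\CC}(M,X)\neq 0$.

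But the step you yourself flag as the main obstacle is a genuine gap, and the mechanism you propose does not close it. You need every $u\colon X\to M$ to factor through $g$, equivalently $w\circ u=0$. Your idea is to take a presentation $T_1\to T_0\xto{b}X\to T_1[1]$ with $T_0,T_1\in\add(\ol{T}\oplus M)$ and push it through the $2$-CY duality; however $T_0$ and $T_1$ will in general contain $M$ itself as a summand, and $\Ext^1_{\CC}(M,X)\neq 0$ --- a fact you prove and use elsewhere --- so the relevant $\Ext$-groups do not vanish. Composing, the components of $u\circ b$ passing through $\add\ol{T}$ are killed by $w$ (they factor through $g$), but the $M$-components survive, and the long exact sequence then only locates $w\circ u$ relative to the connecting map $X\to T_1[1]$; it never forces it to be zero. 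This is exactly where all existing proofs invest their real work: [BMRRT], the paper's source, reduces via Theorem 2.5 to the case where $\ol{T}\oplus M$ is a tilting module over a hereditary algebra $H'$ and then exploits the module-theoretic exchange sequence of Theorem 2.2 (with the non-sincere case handled by the complement $P[1]$, as in the paper's proof of Theorem 2.7), while the intrinsic argument valid in any $2$-CY category needs Iyama--Yoshino's subfactor-category machinery, not a duality push. Two further points: your closing remark that Theorem 2.2 provides ``a useful check'' inverts the actual logic (within the paper's framework that reduction essentially \emph{is} the proof), and note that Theorem 2.2 yields only \emph{one} exact sequence, not both, so the second triangle can never be a module-theoretic image --- it genuinely lives in $\CC$. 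Finally, you never prove that $X$ is indecomposable; without this the summand count that makes $\ol{T}\oplus X$ maximal rigid does not start, and one must exclude $X\cong (M^*)^r$ with $r\geq 2$, which requires combining right-minimality of $g$ (a summand of $X$ on which $w$ vanishes splits off $B$, contradicting minimality) with either $\End_{\CC}(M)=k$ or $\dim\Ext^1_{\CC}(M,M^*)=1$ --- the latter being Theorem 2.10, which the paper states only after this one.
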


We illustrate with the following.

\bigskip
\noindent {\bf Example:} Consider again $\CC$ for $H=kQ$, where $Q\colon 1\to 2\to 3$.
Let $\ol{T}=P_3\oplus P_1$. Then the two complements are $M=P_2$ and $M^*=S_1$. The
triangles connecting $M$ and $M^*$ are of the form 
$$S_1\to S_3\to P_2\to \text{ }\text{  and }\text{ }P_2\to P_1\to S_1\to $$
where $\Hom_{\CC}(S_1,S_3)=\Hom(S_1,\tau^{-1}S_3[1])=\Hom(S_1,S_2[1])\simeq k$.

\bigskip
When $M$ and $M^*$ are complements of a common  almost complete cluster-tilting object,
we call $(M,M^*)$\textit{ an exchange pair}. There is the following characterization of
such a pair.

\begin{theorem}
  A pair of indecomposable objects $(M,M^*)$ in a cluster category $\CC$ is an
exchange pair if and only if $\Ext^1_{\CC}(M,M^*)$ is one dimensional over $k$.
\end{theorem}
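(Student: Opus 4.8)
The statement is an ``if and only if'', and I would treat the two implications separately. Throughout I use the $2$-Calabi--Yau duality $D\Ext^1_{\CC}(A,B)\cong\Ext^1_{\CC}(B,A)$ of Theorem 2.3(c); in particular $\dim_k\Ext^1_{\CC}(M,M^*)=\dim_k\Ext^1_{\CC}(M^*,M)$, so the two $\Ext$-spaces may be used interchangeably.

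\textbf{Necessity.} Suppose $(M,M^*)$ is an exchange pair, so $M$ and $M^*$ are the two nonisomorphic complements of an almost complete cluster-tilting object $\ol{T}$. By Theorem 2.9 there is a triangle $M^*\xto{f}B\xto{g}M\xto{h}M^*[1]$ with $B\in\add\ol{T}$ and $g$ a minimal right $\add\ol{T}$-approximation. This triangle is nonsplit: a splitting would give $M\cong B\oplus M^*$, forcing (as $M$ is indecomposable) either $M\cong M^*$ or $M\cong B\in\add\ol{T}$, both impossible. Hence $h\neq 0$ and $\Ext^1_{\CC}(M,M^*)\neq 0$. Applying $\Hom_{\CC}(M,-)$ gives the exact sequence $\Hom_{\CC}(M,M)\xto{h_*}\Ext^1_{\CC}(M,M^*)\to\Ext^1_{\CC}(M,B)$. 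Since $\ol{T}\oplus M$ is rigid and $B\in\add\ol{T}$, we have $\Ext^1_{\CC}(M,B)=0$, so $h_*$ is surjective and $\Ext^1_{\CC}(M,M^*)$ is a quotient of $\End_{\CC}(M)$. Now $M$ is a rigid indecomposable object, hence a brick, i.e. $\End_{\CC}(M)=k$ (this follows from Theorem \ref{cluster-tilting objects} by reduction to an exceptional module over a hereditary algebra, which over such an algebra is a brick, together with a direct computation that the extra $F$-twisted summand of $\Hom_{\CC}$ vanishes). A nonzero quotient of $k$ equals $k$, so $\dim_k\Ext^1_{\CC}(M,M^*)=1$.

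\textbf{Sufficiency.} Assume $\dim_k\Ext^1_{\CC}(M,M^*)=1$. First $M\not\cong M^*$: for a module $N$, Theorem 2.3(d) gives $\Ext^1_{\CC}(N,N)\cong\Ext^1_H(N,N)\oplus D\Ext^1_H(N,N)$, which is even-dimensional, while for $N=P[1]$ it vanishes; so a self-extension space is never one-dimensional. Let $\varepsilon\neq 0$ span $\Ext^1_{\CC}(M,M^*)$ and form the triangle $M^*\xto{f}B\xto{g}M\xto{\varepsilon}M^*[1]$. My plan is: (1) prove a rigidity lemma stating that $M$, $M^*$ and $B$ are rigid and that $\Ext^1_{\CC}(M,B)=\Ext^1_{\CC}(B,M)=\Ext^1_{\CC}(M^*,B)=\Ext^1_{\CC}(B,M^*)=0$; (2) conclude that $M$ is a rigid indecomposable and complete $\add M$ to a cluster-tilting object $\ol{T}\oplus M$ (possible since maximal rigid $=$ cluster-tilting by Proposition 2.6); (3) by Theorem 2.7 the almost complete cluster-tilting object $\ol{T}$ has exactly two complements, $M$ and some $M'$, and necessity gives $\dim_k\Ext^1_{\CC}(M,M')=1$; (4) identify $M'\cong M^*$. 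For the last step the mechanism is to show, using the rigidity lemma and the cluster-tilting property $\Ext^1_{\CC}(\ol{T}\oplus M,X)=0\Rightarrow X\in\add(\ol{T}\oplus M)$, that $B\in\add\ol{T}$ and that $g$ is the minimal right $\add\ol{T}$-approximation; then uniqueness of the exchange triangle forces its third term $M^*$ to coincide with $M'$. Hence $(M,M^*)$ is an exchange pair.

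\textbf{Main obstacle.} The difficulty is concentrated entirely in the sufficiency direction, and specifically in the rigidity lemma of step~(1): deducing from the single numerical hypothesis $\dim_k\Ext^1_{\CC}(M,M^*)=1$ that $M$, $M^*$, and the middle term $B$ carry no self-extensions and that $B$ extends neither $M$ nor $M^*$. Here the $2$-Calabi--Yau duality of Theorem 2.3(c) is indispensable, since it symmetrizes the roles of $M$ and $M^*$ and lets one feed any putative self-extension of $B$ back, through the long exact sequences obtained by applying $\Hom_{\CC}(M,-)$, $\Hom_{\CC}(M^*,-)$ and $\Hom_{\CC}(B,-)$ to the $\varepsilon$-triangle and its $2$-Calabi--Yau dual, into the tightly constrained one-dimensional groups $\Ext^1_{\CC}(M,M^*)$ and $\Ext^1_{\CC}(M^*,M)$. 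A subsidiary delicate point is the identification $M'\cong M^*$ in step~(4), where one must rule out the \emph{a priori} possibility that $M^*$ is merely some object with one-dimensional $\Ext^1$ to $M$ rather than the genuine exchange partner; once rigidity is established this becomes a comparison of two approximation triangles over $M$ with one-dimensional connecting spaces, and the uniqueness (up to scalar) of the spanning extension closes the argument, the remaining ingredients being Theorems 2.7 and \ref{cluster-tilting objects}.
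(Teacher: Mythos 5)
The paper states this theorem without proof (it is survey material; the Notes attribute it to \cite{bmrrt}), so your proposal has to stand on its own. It does not: each direction is closed by an auxiliary lemma that is false in cluster categories, and the two failures are independent.

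\textbf{Sufficiency.} Step (1), your ``rigidity lemma'', is irreparably false, and with it the literal statement being proved: the hypothesis that $M$ and $M^*$ are rigid must be part of the theorem (as it is in \cite{bmrrt}), not a consequence of $\dim_k\Ext^1_{\CC}(M,M^*)=1$. Take $H=kQ$ with $Q$ the Kronecker quiver with two arrows $1\to 2$, let $M=R$ be a regular simple module, with dimension vector $(1,1)$, and let $M^*=P_1[1]$. Then
\[
\Ext^1_{\CC}(R,P_1[1])\cong D\Ext^1_{\CC}(P_1[1],R)\cong D\Hom_{\CC}(P_1,R)\cong D\Hom_H(P_1,R)\cong k,
\]
the twisted summand vanishing because $P_1$ is projective, whereas $\Ext^1_{\CC}(R,R)\cong\Ext^1_H(R,R)\oplus D\Ext^1_H(R,R)\neq 0$ since $\tau R\cong R$. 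A complement of an almost complete cluster-tilting object is a summand of a cluster-tilting object, hence rigid, so $(R,P_1[1])$ is not an exchange pair although the $\Ext$-space is one-dimensional. No manipulation of long exact sequences and 2-CY duality can prove a false lemma. Separately, even once rigidity is assumed, step (2) is too crude: an arbitrary completion $\ol{T}\oplus M$ generally has a second complement different from $M^*$ (in the paper's running $A_3$ example, $P_2$ has four distinct exchange partners $S_1$, $I_2$, $P_2[1]$, $P_3[1]$, one for each of its four completions), so $\ol{T}$ must be constructed from the pair $(M,M^*)$; that construction is the real content of the proof in \cite{bmrrt}.

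\textbf{Necessity.} The skeleton is right (exchange triangle, nonsplitness, surjectivity of $h_*$ because $\Ext^1_{\CC}(M,B)=0$), but you finish with the claim that rigid indecomposables are bricks in $\CC$, asserting that ``the extra $F$-twisted summand of $\Hom_{\CC}$ vanishes''. It does not, and the paper itself contains the counterexample: in the example of Section 2.2, $M$ is a rigid indecomposable (quasi-simple in a tube of rank two) with $\Hom_H(M,M)\neq 0$ and $\Hom_{{\bf D}^b(H)}(M,\tau^{-1}M[1])\cong D\Hom_H(M,M)\neq 0$, so $\End_{\CC}(M)$ is two-dimensional. Your surjection then only yields $\dim_k\Ext^1_{\CC}(M,M^*)\leq\dim_k\End_{\CC}(M)$, which can be $2$. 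The repair is to compute where the brick property genuinely holds, namely in a module category: by Theorem \ref{cluster-tilting objects} one may assume $\ol{T}\oplus M$ is a tilting module over a hereditary algebra $H'$ derived equivalent to $H$. If $\ol{T}$ is sincere, both complements are $H'$-modules and there is an exact sequence $0\to M^*\to B\to M\to 0$ as in Theorem 2.2; then $\Ext^1_{H'}(M^*,M)$ vanishes (it is a quotient of $\Ext^1_{H'}(B,M)=0$), $\Ext^1_{H'}(M,M^*)$ is a nonzero quotient of $\End_{H'}(M)\cong k$ because exceptional $H'$-modules are bricks in $\mod H'$, and Theorem 2.4(d) gives $\Ext^1_{\CC}(M,M^*)\cong k\oplus 0$. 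If $\ol{T}$ is not sincere, one complement is some $P[1]$, and your quotient argument does work on that side, since $\End_{\CC}(P[1])\cong\End_{H'}(P)=k$ holds there. (A small slip besides: if the exchange triangle split, one would get $B\cong M\oplus M^*$, not $M\cong B\oplus M^*$; the contradiction is that $M$ becomes a summand of $B\in\add\ol{T}$.)
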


 Note that when we do not work over an algebraically closed field then the relevant condition is
that $\Ext^1_{\CC}(M,M^*)$ is one dimensional over $\End_{\CC}(M)/\rad\End_{\CC}(M)$ and over
$\End_{\CC}(M^*)/\rad\End_{\CC}(M^*)$.
\bigskip
\subsection{Analogs}
We have seen that in the cluster category $\CC$ we have described a collection of
objects, which all have the same number of nonisomorphic indecomposable summands, the
same way as all clusters have the same number of elements. In both cases these numbers
coincide with the number of vertices in the quiver.

We note a slight difference with respect to exchange. For cluster-tilting  objects there
is a unique way of exchanging an indecomposable summand.  For clusters there is by
definition at least one way of exchanging a cluster variable to get a new cluster, but it
is not clear that it is unique. There might be related clusters at other places in the cluster graph.

The analog of cluster variables is now clearly the indecomposable summands of the
cluster-tilting objects, which are the indecomposable rigid objects.

As analogs of the seeds $(\ul{x}', Q')$ we have \emph{tilting seeds} $(T,Q_{T})$, where
$T$ is a cluster-tilting object and $Q_T$ is the quiver of $\End_{\CC}(T)^{\op}$. Note
that also here there is a slight difference, since a tilting seed by definition is 
determined by the cluster-tilting object, while the corresponding result is not known in
general in the context of cluster algebras, as highlighted in Section 1.4(e). Actually, these
differences in behavior also indicate some strength, and can be used to prove new results
on cluster algebras.

The exchange triangles $M^*\to B\to M\to $ and $M\to B'\to M^*\to $ are connected with
the exchange multiplication $x_ix_i^*=m_1+m_2$ in cluster algebras. Here the monomials
$m_1$ and $m_2$ correspond to the objects $B$ and $B'$.

\smallskip
We point out that for almost all results in this chapter we could instead deal with
hereditary abelian categories with  finite dimensional homomorphism and extension spaces
and which have a tilting object. By \cite{ha2} it is known that the only additional
categories we have to deal with are the categories coh $\mathbb{X}$ of coherent sheaves
on weighted projective lines \cite{gl}.

The only result which remains open in this setting is whether the cluster-tilting graph
is connected.

\bigskip
\noindent {\bf Notes:}  The results from tilting theory are taken from
\cite{apr}\cite{bb}\cite{hr}\cite{hu1}\cite{hu2}\cite{rs}\cite{u}. It was proved in
\cite{k} that the cluster categories are triangulated. Otherwise the material in this
section is taken from \cite{bmrrt}\cite{bmr2}.

\bigskip
\section{Cluster-tilted algebras}

In the same way as the class of tilted algebras is defined as endomorphism algebras of
tilting modules over hereditary algebras, we consider endomorphism algebras of
cluster-tilting objects in cluster categories. These algebras have been called
\emph{cluster-tilted} algebras. They have several interesting properties, ranging from
homological properties to properties described in terms of quivers with relations. In particular
there are nice relationships with the associated hereditary algebras.

\bigskip
\subsection{The quivers of the cluster-tilted algebras}
We first note that the hereditary algebra $H$ is itself a cluster-tilted algebra since
$\Hom_{\CC}(H,H)=\Hom_H(H,H)\oplus\Hom_{{\bf D}^b(H)}(H,\tau^{-1}H[1])$, where the last
term is clearly zero. An important property of the quiver $Q_T$ of a cluster-tilted algebra $\End_{\CC}(T)^{\op}$ is that $Q_T$ has no loops or $2$-cycles. Otherwise the basis for information on the quivers of cluster-tilted algebras
comes from comparing the cluster-tilted algebras $\Gamma = \End_\CH(T)^{\op}$ and
$\Gamma' = \End_\CH(T')^{\op}$, where $T$ and $T'$ are nonisomorphic cluster-tilting
objects having a common almost complete cluster-tilting object, that is, $T$ and $T'$ are
neighbours in the cluster-tilting graph.

\begin{theorem} \label{3.1}
With the above notation, let $Q_T$ be the quiver of $\Gamma = \End_\CH(T)^{\op}$ and
$Q_{T'}$ the quiver of $\Gamma' = \End_\CH(T')^{\op}$. Write $T = T_1 \oplus \dots \oplus
T_n$, where $T_i$ are nonisomorphic indecomposable objects, and $T_k$ is not a summand of
$T'$. Then $\mu_k(Q_T) = Q_{T'}$.
\end{theorem}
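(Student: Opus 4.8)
The plan is to compute the Gabriel quivers $Q_T$ and $Q_{T'}$ directly in terms of irreducible maps and compare them. Fix $T = T_1 \oplus \dots \oplus T_n$ with $T_k = M$, and write $T' = \ol{T} \oplus M^*$ with $\ol{T} = \bigoplus_{j \neq k} T_j$; write the exchange triangles from Theorem 2.9 as $M^* \xto{f} B \xto{g} M \to$ and $M \xto{s} B' \xto{t} M^* \to$, where $g, t$ are minimal right and $f, s$ minimal left $\add\ol{T}$-approximations. Recall that the number of arrows $i \to j$ in the quiver of $\End_\CH(T)^{\op}$ equals $\dim_k \rad_{\add T}(T_j, T_i)/\rad^2_{\add T}(T_j, T_i)$. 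The key structural fact I would use throughout is that a minimal right $\add\ol{T}$-approximation realizes exactly the ``top of the radical'', so that the multiplicity of $T_j$ in $B$ is precisely the number of arrows $j \to k$ in $Q_T$, and symmetrically (via the minimal left approximation $s$) the multiplicity of $T_j$ in $B'$ is the number of arrows $k \to j$ in $Q_T$.

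First I would treat the arrows incident with vertex $k$ and show they are reversed, matching part (i) of FZ-mutation. Since $M^* = T'_k$, the second triangle $M \xto{s} B' \xto{t} M^*$ exhibits $t$ as a minimal right $\add\ol{T}$-approximation of $M^*$, so the multiplicity of $T_j$ in $B'$ now counts the arrows $j \to k$ in $Q_{T'}$; dually, $f\colon M^* \to B$ is a minimal left approximation, so the multiplicity of $T_j$ in $B$ counts the arrows $k \to j$ in $Q_{T'}$. Comparing with the multiplicities computed for $Q_T$ above yields that the number of arrows $j \to k$ in $Q_T$ equals the number of arrows $k \to j$ in $Q_{T'}$, and vice versa. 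Thus all arrows at $k$ are reversed, as required. The compatibility of the two triangles is guaranteed by the $2$-Calabi--Yau duality $D\Ext^1_\CH(M,M^*) \simeq \Ext^1_\CH(M^*,M)$ of Theorem 2.4(c), which makes the left and right approximations dual to one another.

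The main work, and the main obstacle, is part (ii): the arrows between two vertices $i, j$ with $i, j \neq k$. Here I would fix $i, j$ and track how irreducible maps $T_i \to T_j$ in $\add T$ compare with those in $\add T'$. The point is that a map $T_i \to T_j$ irreducible in $\add T$ may cease to be irreducible in $\add T'$ because it factors through $M^*$, while conversely a composite $T_i \to M \to T_j$ through the removed vertex $k$ (a length-two path $i \to k \to j$ in $Q_T$) may produce genuinely new irreducible maps in $\add T'$. Applying $\Hom_\CH(T_i, -)$ and $\Hom_\CH(-, T_j)$ to the exchange triangles and passing to radical-square quotients, I would show that the net count of arrows $i \to j$ changes by exactly the number of such paths, giving the value $nm - r$ of the mutation rule, where $n$ and $m$ are the numbers of arrows $i \to k$ and $k \to j$ and $r$ the number of arrows $i \to j$ in $Q_T$.

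The delicate issue is the cancellation built into the formula $nm - r$ and the interpretation of a negative value as arrows in the reverse direction; this is precisely where I would invoke the fact, recalled at the start of Section 3.1, that $Q_T$ and $Q_{T'}$ have no loops and no $2$-cycles. The absence of $2$-cycles forces arrows between any ordered pair of vertices to run in a single direction, so the signed count $nm - r$ is unambiguous and no spurious cancellation occurs. This is what lets the categorical computation of irreducible maps match the purely combinatorial mutation rule on the nose, completing the identification $\mu_k(Q_T) = Q_{T'}$.
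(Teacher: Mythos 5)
Your handling of the arrows incident with the mutated vertex $k$ is correct in substance, and it is the standard argument: since $Q_T$ and $Q_{T'}$ have no loops, the minimal right and left $\add\ol{T}$-approximations occurring in the two exchange triangles of Theorem 2.9 compute the arrows ending and starting at $k$; and since the same two triangles serve both complements $M$ and $M^*$ with the roles of right and left approximation interchanged, all arrows at $k$ get reversed.

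The genuine gap is in your part (ii), which is where the entire difficulty of the theorem sits. For $i,j\neq k$ the space $\Hom_\CH(T_i,T_j)$ does not change when $T$ is replaced by $T'$; what changes is which maps lie in the square of the radical (factorizations through $M$ versus through $M^*$), so the number of arrows between $i$ and $j$ changes by $\dim\rad^2_{\add T}(T_i,T_j)-\dim\rad^2_{\add T'}(T_i,T_j)$. Your plan --- apply $\Hom_\CH(T_i,\;)$ and $\Hom_\CH(\;,T_j)$ to the exchange triangles and ``pass to radical-square quotients'' --- is a restatement of what must be proved, not a mechanism for proving it. Concretely you would need: (a) that of the $nm$ compositions $T_i\to M\to T_j$ exactly $nm-r$ remain linearly independent modulo $\rad^2_{\add T'}$, and (b) that when the $r$ existing arrows run from $j$ to $i$, the corresponding maps become radical-square in $\add T'$, i.e.\ factor through $M^*$. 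Both statements depend on the relations of $\Gamma$ and $\Gamma'$, not merely on the existence and minimality of the exchange triangles, and the absence of $2$-cycles only removes the sign ambiguity in interpreting $nm-r$; it does not produce the cancellation. This is precisely why the paper proceeds differently: Proposition~\ref{3.3} (idempotent quotients of cluster-tilted algebras are again cluster-tilted) reduces Theorem~\ref{3.1} to cluster-tilted algebras with at most three simple modules --- the mutation rule only involves the triple $i,k,j$ --- and the three-vertex case is then settled by the explicit analysis of such algebras, via the Markov-type equations of \cite{bbh} and the results of \cite{ker2}. Without that reduction, or some substitute giving control of the relations (as the later quiver-with-potential approach of \cite{birsm} provides), your argument asserts the mutation formula rather than proving it.
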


Using this, we get the following consequence, where we use that the cluster-tilting graph
is connected.

\begin{theorem} \label{3.2}
Let $Q$ be a finite connected quiver without oriented cycles, and let $\CH$ be the cluster category
associated with $H = kQ$. Then the quivers of the cluster-tilted algebras associated with
$\CH$ are the quivers in the mutation class of $Q$.
\end{theorem}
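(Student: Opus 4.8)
The plan is to deduce this statement from Theorem~\ref{3.1} together with the connectedness of the cluster-tilting graph (stated just before Theorem~\ref{3.1}). The two facts we package together are: first, that mutation of cluster-tilting objects corresponds exactly to FZ-mutation of their quivers, and second, that the cluster-tilting graph is connected, so that every cluster-tilting object is reachable from the canonical one by a sequence of single-summand exchanges.

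First I would fix the base point. Since $H = kQ$ is itself a cluster-tilted algebra (it equals $\End_{\CH}(H)^{\op}$, as noted at the start of Section~3.1, because the correction term $\Hom_{\DBH}(H,\tau^{-1}H[1])$ vanishes), the object $T_0 = H$ is a cluster-tilting object in $\CH$ whose quiver $Q_{T_0}$ is exactly $Q$. This identifies one vertex of the cluster-tilting graph with the initial quiver $Q$.

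Next I would prove the inclusion ``every quiver of a cluster-tilted algebra lies in the mutation class of $Q$.'' Let $T$ be an arbitrary cluster-tilting object in $\CH$. By connectedness of the cluster-tilting graph there is a finite path $T_0 = H, T^{(1)}, \dots, T^{(m)} = T$, where consecutive objects share a common almost complete cluster-tilting summand, i.e.\ differ in exactly one indecomposable summand. Applying Theorem~\ref{3.1} to each edge, the quiver $Q_{T^{(j+1)}}$ is obtained from $Q_{T^{(j)}}$ by a single FZ-mutation $\mu_{k_j}$. Composing along the path gives $Q_T = \mu_{k_{m-1}} \cdots \mu_{k_0}(Q)$, so $Q_T$ is mutation equivalent to $Q$. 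For the reverse inclusion, I would run the same argument in reverse: given any quiver $Q'$ in the mutation class of $Q$, realize the mutation sequence producing it. Starting from $T_0 = H$ and using Theorem~\ref{3.1}, each FZ-mutation $\mu_k$ applied to the current quiver is realized by exchanging the $k$-th indecomposable summand of the current cluster-tilting object (which is possible, and produces a cluster-tilting object, by the exchange result Theorem~2.11); the resulting object's quiver is precisely $\mu_k$ of the previous quiver. Iterating realizes $Q'$ as $Q_T$ for some cluster-tilting object $T$, hence as the quiver of a cluster-tilted algebra.

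I expect the main subtlety to be bookkeeping rather than a genuine obstacle: Theorem~\ref{3.1} is an \emph{unlabeled} statement about a single mutation (``$T_k$ is not a summand of $T'$, then $\mu_k(Q_T) = Q_{T'}$''), so to compose mutations along a path one must track how the vertex labels of successive quivers correspond to the indecomposable summands of the successive cluster-tilting objects. The identification of vertices with summands, compatible with the exchange, is exactly what Theorem~\ref{3.1} provides at each step, and the direction ``mutation class $\subseteq$ quivers of cluster-tilted algebras'' requires knowing that every single FZ-mutation can be lifted to an actual summand exchange — which is guaranteed because each indecomposable summand of a cluster-tilting object has a unique exchange partner (Theorem~2.11). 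Once these correspondences are lined up, both inclusions follow formally, and no further computation is needed.
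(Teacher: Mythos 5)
Your proposal is correct and is exactly the paper's intended argument: the paper derives Theorem~\ref{3.2} from Theorem~\ref{3.1} together with the connectedness of the cluster-tilting graph (Theorem~2.8), with the hereditary algebra $H$ itself serving as the base-point cluster-tilting object whose quiver is $Q$, and your two inclusions fill in precisely the details the paper leaves implicit. The only slip is bibliographic: the exchange result you invoke for lifting a single FZ-mutation to a summand exchange is Theorem~2.7 of the paper (an almost complete cluster-tilting object has exactly two complements), not Theorem~2.11.
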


\noindent {\bf Example.} The quivers of the cluster-tilted algebras of type $A_3$ are
$$
\begin{array}{c}{\xymatrix@C0.6cm{\bullet \ar[r] & \bullet \ar[r] & \bullet}}\end{array},
\begin{array}{c}{\xymatrix@C0.6cm{\bullet \ar[r] & \bullet & \bullet \ar[l]}}\end{array},
\begin{array}{c}{\xymatrix@C0.6cm{\bullet & \bullet \ar[l] \ar[r] &
\bullet}}\end{array}\textrm{ and}
\begin{array}{c}{ \xymatrix@C0.5cm{
 \bullet \ar@/^/[rr] &\bullet\ar[l]&\bullet\ar[l]
}}\end{array}
$$
as already discussed in Section 1.3.

\bigskip
The following result on cluster-tilted algebras, also of interest in itself, is useful
for proving Theorem~\ref{3.1}.

\begin{proposition} \label{3.3}
If $\Gamma$ is a cluster-tilted algebra, then for any sum $e$ of vertices in the quiver,
we have that $\Gamma/\Gamma e \Gamma$ is also cluster-tilted.
\end{proposition}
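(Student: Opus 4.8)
The plan is to first reinterpret the quotient algebra categorically, then reduce to the removal of a single vertex, and finally realize the resulting algebra inside a smaller cluster category. Write $T = T_1 \oplus \cdots \oplus T_n$, so that $\Gamma = \End_{\CH}(T)^{\op}$ and the primitive idempotents of $\Gamma$ correspond to the summands $T_i$. If $e = \sum_{i \in S} e_i$ and $U = \bigoplus_{i \in S} T_i$, then the two-sided ideal $\Gamma e \Gamma$ consists exactly of the endomorphisms of $T$ that factor through $\add U$. Hence, setting $\ol T = \bigoplus_{i \notin S} T_i$, one obtains a canonical identification of $\Gamma/\Gamma e \Gamma$ with $\End_{\CH}(\ol T)^{\op}$ modulo those endomorphisms of $\ol T$ that factor through $\add U$. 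The first thing I would record is that it suffices to treat the case $|S| = 1$: once the single-vertex case is known, the general case follows by induction on $|S|$, since $\Gamma/\Gamma(e_i + e')\Gamma$ is the quotient of the cluster-tilted algebra $\Gamma/\Gamma e_i \Gamma$ by the image of $e'$, and the remaining idempotents still correspond to indecomposable summands of a cluster-tilting object in the relevant cluster category.

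So assume $U = T_k$ is indecomposable, so that $\ol T$ is an almost complete cluster-tilting object, with a second complement $T_k^*$ by the two-complement theorem. The core of the argument is to build a smaller cluster category in which $\ol T$ becomes cluster-tilting with endomorphism algebra $\Gamma/\Gamma e_k\Gamma$. I would form the subfactor $\cc' = \mathcal{Z}/[\add T_k]$, where $\mathcal{Z} = \{\, X \in \CH : \Ext^1_{\CH}(T_k, X) = 0 \,\}$ and $[\add T_k]$ denotes the ideal of morphisms factoring through $\add T_k$. Using the Calabi--Yau duality of Theorem~2.4(c), one checks that $\mathcal{Z}$ is stable under the relevant extensions, that $\cc'$ inherits a $\Hom$-finite triangulated $2$-Calabi--Yau structure, and that $\ol T$ is a cluster-tilting object of $\cc'$. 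Since morphisms in $\cc'$ are morphisms in $\CH$ taken modulo those factoring through $\add T_k$, the identification of the first paragraph yields $\End_{\cc'}(\ol T)^{\op} \cong \Gamma/\Gamma e_k\Gamma$.

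It remains to see that $\cc'$ is itself equivalent to a genuine cluster category $\cc_{H'}$ for a finite dimensional hereditary algebra $H'$, and this is the step I expect to be the main obstacle, since a priori $\cc'$ is only an abstract $\Hom$-finite $2$-Calabi--Yau category carrying a cluster-tilting object. Here I would first invoke Theorem~\ref{cluster-tilting objects} to reduce to the situation where $T$ is an honest tilting module over a hereditary algebra $H$, so that $T_k$ is an exceptional $H$-module, and then appeal to the classical fact that the perpendicular category of $T_k$ inside the hereditary category $\mod H$ is again equivalent to $\mod H'$ for a hereditary algebra $H'$ with one fewer simple. Lifting this equivalence of perpendicular categories to bounded derived categories, and checking that it is compatible with the defining autoequivalence $F = \tau^{-1}[1]$, would identify $\cc'$ with $\cc_{H'}$ and carry $\ol T$ to a cluster-tilting object there; this exhibits $\Gamma/\Gamma e_k\Gamma$ as cluster-tilted and closes the induction.

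The delicate points, as I see them, are precisely the verification that the reduction passes through the autoequivalence $F$ in the passage from $\DBH$ to the orbit category, and the care needed in allowing $H'$ to be a product of connected hereditary algebras (which is harmless for the notion of cluster-tilted); the purely formal manipulations with the ideal $\Gamma e \Gamma$ and the inductive step on $|S|$ are routine by comparison. An alternative to the perpendicular-category route, should the compatibility with $F$ prove awkward, would be to identify $H'$ directly from the quiver of $\End_{\cc'}(\ol T)^{\op}$ together with its acyclicity, but I would expect the derived-category argument to be the more robust one.
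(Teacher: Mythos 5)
First, a point of reference: the survey states Proposition~\ref{3.3} without proof (it is taken from \cite{bmr2}), so your argument has to stand entirely on its own. Your first paragraph does: the identification $\Gamma/\Gamma e\Gamma \cong \End_{\CH}(\ol T)^{\op}/[\add U]$, where $[\add U]$ denotes the maps factoring through $\add U$, and the induction reducing to a primitive idempotent, are correct and routine. Your second paragraph is also correct as a list of statements, but note what it invokes: that $\cc' = \mathcal{Z}/[\add T_k]$ carries a $\Hom$-finite triangulated $2$-CY structure in which $\ol T$ is cluster-tilting is precisely the Calabi--Yau reduction theorem of Iyama--Yoshino \cite{iy}; in particular the shift of $\cc'$ is \emph{not} the restriction of the shift of $\CH$ (it is built from left $\add T_k$-approximation triangles), so this is a substantial theorem to be cited, not something ``one checks'' from Theorem 2.4(c).

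The genuine gap is your third step, and without it the argument proves strictly less than the proposition: granting all of paragraph two, you only know that $\Gamma/\Gamma e_k\Gamma$ is $2$-CY-tilted, and as Section~\ref{sec_5-2} of the survey stresses, that class properly contains the cluster-tilted algebras. Your proposed mechanism for the upgrade --- restrict to the perpendicular category $T_k^{\perp}\simeq \mod H'$ and lift through the derived/orbit construction --- fails at the very first step, because $T_k^{\perp}$ is not contained in $\mathcal{Z}$. Indeed, by Theorem 2.4(d), $\Ext^1_{\CH}(T_k,X)\cong \Ext^1_H(T_k,X)\oplus D\Ext^1_H(X,T_k)$, and objects of $T_k^{\perp}$ may have $\Ext^1_H(X,T_k)\neq 0$: for $Q\colon 1\to 2$, $T=H$, $T_k=P_2$, the module $S_1$ lies in $T_k^{\perp}$ while $\Ext^1_H(S_1,P_2)\neq 0$, so $S_1\notin\mathcal{Z}$. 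For the same reason $\tau$ does not preserve $T_k^{\perp}$ (here $\tau S_1=P_2$), so $F=\tau^{-1}[1]$ does not even act on the derived category of $T_k^{\perp}$; the ``compatibility with $F$'' you flag as delicate is therefore not a verification to be carried out but the entire content of the missing theorem, since the Auslander--Reiten translation of $H'$ is computed inside $T_k^{\perp}$ and has no a priori relation to $\tau_H$. Your fallback is also unavailable: the quiver of $\End_{\cc'}(\ol T)^{\op}=\Gamma/\Gamma e_k\Gamma$ need not be acyclic (the mutation class of $A_4$ contains a $3$-cycle with a pendant arrow, and deleting the pendant vertex leaves a $3$-cycle). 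What would honestly close the gap is the recognition theorem, Theorem~\ref{thm_5-4} from \cite{kr2}: $\cc'$ is algebraic and $2$-CY, so it suffices to produce \emph{some} cluster-tilting object of $\cc'$ whose endomorphism quiver is acyclic --- but producing such an object is the real remaining work, and your proposal does not address it. (For the record, the original proof in \cite{bmr2} predates both \cite{iy} and \cite{kr2} and proceeds quite differently, working directly with tilting modules over hereditary algebras.)
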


We note that here there is a difference as compared to tilted algebras, where the
corresponding result is not true in general. On the other hand the class of tilted
algebras is closed under taking endomorphism algebras of projective modules, while this
is not in general the case for cluster-tilted algebras.

The above result makes it possible to reduce the proof of Theorem~\ref{3.1} to the case
of cluster-tilted algebras with $3$ simple modules. There is a description
of the possible quivers of cluster-tilted algebras with 3 vertices in terms of Markov equations \cite{bbh}, and there is more
information on these algebras in \cite{ker2}.

Theorem~\ref{3.2} establishes a nice connection between cluster-tilting theory and
cluster algebras. We shall see some applications in the next section.

\bigskip
\subsection{Relations}

It is of course also of interest to describe the relations for a cluster-tilted algebra
once the quiver is given. The following has recently been proved \cite{birsm}.

\begin{theorem}
A cluster-tilted algebra is uniquely determined by its quiver.
\end{theorem}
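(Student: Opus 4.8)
The plan is to realize every cluster-tilted algebra as the Jacobian algebra of a quiver with potential and to exploit the rigidity of such potentials. Recall that a \emph{quiver with potential} is a pair $(Q,W)$, where $W$ is a (possibly infinite) linear combination of cyclic paths in the complete path algebra $\widehat{kQ}$, and that its \emph{Jacobian algebra} $\mathcal{P}(Q,W)$ is the quotient of $\widehat{kQ}$ by the closure of the ideal generated by the cyclic derivatives $\partial_a W$, one for each arrow $a$. Derksen--Weyman--Zelevinsky introduced a mutation operation $\mu_k$ on quivers with potential, well defined up to right-equivalence, which on underlying quivers agrees with FZ-mutation and which carries right-equivalent representatives to isomorphic Jacobian algebras.

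First I would show that each cluster-tilted algebra $\Gamma = \End_{\CC}(T)^{\op}$ is isomorphic to a Jacobian algebra $\mathcal{P}(Q_T, W_T)$ for a suitable potential $W_T$ on its quiver $Q_T$. The relations of $\Gamma$ can be read off from the exchange triangles of Theorem~2.9, and the $2$-Calabi--Yau property ($D\Ext^1_{\CC}(A,B)\cong\Ext^1_{\CC}(B,A)$) supplies the duality needed to package these relations as the cyclic derivatives of a single potential. The base case is transparent: for $T = H$ the algebra $\Gamma = H = kQ$ is hereditary and acyclic, hence equals $\mathcal{P}(Q, 0)$ with zero potential.

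Next I would intertwine mutation of cluster-tilting objects with mutation of potentials: if $T$ and $T'$ are neighbours in the cluster-tilting graph differing in the $k$-th indecomposable summand, then $(Q_{T'}, W_{T'})$ should be right-equivalent to $\mu_k(Q_T, W_T)$. The underlying-quiver part of this statement is precisely Theorem~\ref{3.1}, so the real content is tracking the potential through a single exchange, via the minimal $\add\ol{T}$-approximations appearing in the exchange triangles. Since the acyclic quiver $Q$ carries no oriented cycle, the potential $(Q,0)$ is (vacuously) rigid, and DWZ-mutation preserves rigidity; hence, using that the cluster-tilting graph is connected, an induction shows that \emph{every} $(Q_T, W_T)$ arising from $\CC$ is rigid.

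Finally I would invoke the DWZ uniqueness theorem for rigid quivers with potential: any two rigid potentials on the same underlying quiver are right-equivalent, and right-equivalent quivers with potential have isomorphic Jacobian algebras. Consequently, if $Q_T = Q_{T'}$ then $W_T$ and $W_{T'}$ are right-equivalent, whence $\Gamma = \mathcal{P}(Q_T,W_T) \cong \mathcal{P}(Q_{T'},W_{T'}) = \Gamma'$, which is the assertion. The main obstacle is the combination of the first and third steps: establishing that the relations of a cluster-tilted algebra genuinely are Jacobian, and that mutation of cluster-tilting objects corresponds to mutation of potentials up to right-equivalence. This demands a careful analysis of the exchange triangles and of minimal approximations in $\CC$, and is where essentially all the work lies; the reduction to three simple modules afforded by Proposition~\ref{3.3} is what makes this analysis tractable.
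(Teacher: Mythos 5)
Your proposal is correct and takes essentially the same route as the paper: the survey offers no proof of this theorem but cites Buan--Iyama--Reiten--Smith \cite{birsm}, and your argument --- realize each cluster-tilted algebra as the Jacobian algebra of a quiver with potential by starting from $(Q,0)$ and inducting along the connected cluster-tilting graph, use that mutation of cluster-tilting objects matches mutation of potentials and preserves rigidity, then conclude by uniqueness of rigid potentials --- is precisely the argument of that reference. The one inaccuracy is an attribution: the uniqueness statement you invoke (any two rigid potentials on the same quiver are right-equivalent) is not a theorem of Derksen--Weyman--Zelevinsky but is one of the key new results of \cite{birsm} itself; what DWZ supply is the mutation operation on quivers with potentials, its compatibility with quiver mutation, the preservation of rigidity under mutation, and the invariance of Jacobian algebras under right-equivalence.
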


The conjecture was first verified in the case of finite representation type. In this case
an explicit description is given of a set of minimal relations. Note that the quiver $Q$
has only single arrows. For each arrow $\alpha: i \to j$ in the quiver which lies on a
full oriented cycle, that is, a cycle where there are no other arrows in $Q$ between the
vertices of the cycle, take the sum of the paths from $j$ to $i$, which together with
$\alpha$ give full cycles. For a given $\alpha$ it turns out to be at most two such cycles. Then these relations determine the corresponding
cluster-tilted algebra.

We give some examples to illustrate.

\bigskip
\noindent {\bf Example} Let $Q$ be the quiver
$$
\xymatrix@C0.4cm@R0.4cm{
            &           & 2 \ar[drr]    &           &               \\
1 \ar[urr]  &           &               &           & 3 \ar[dl]     \\
            & 5 \ar[ul] &               & 4 \ar[ll] &
}
$$
which is the quiver of a cluster-tilted algebra $\Gamma$ of type $\DV$. This can be seen
by finding a sequence of mutations from $Q$ to a quiver of type $\DV$. Then $\Gamma$ is
of finite representation type, and is determined by the relations given by all
paths of length 4 being zero.

\bigskip
\noindent {\bf Example} Let $Q$ be the quiver
$$
\xymatrix@C0.5cm@R0.5cm{
                    & 1 \ar[dl]_\alpha \ar[dr]^\beta        &                   \\
2 \ar[dr]_\gamma    &                                       & 3 \ar[dl]^\delta  \\
                    & 4 \ar[uu]_\varepsilon                                 &
}
$$
Then $Q$ is mutation equivalent to a quiver of type $\DIV$, and the corresponding
cluster-tilted algebra is hence of finite representation type. Then the relations are
$\gamma\alpha + \delta\beta = 0$, $\varepsilon\gamma = 0$, $\varepsilon\delta = 0$,
$\alpha\varepsilon = 0$, $\beta\varepsilon = 0$.

\bigskip

Some of these results about the relations hold more generally, as given in the following.

\begin{proposition} \label{3.4}
Let $\Gamma$ be a cluster-tilted algebra and $S_1$, $S_2$ simple $\Gamma$-modules. Then
we have $\dim \Ext^1_\Gamma(S_1,S_2) \geq \dim \Ext^2_\Gamma(S_2,S_1)$.
\end{proposition}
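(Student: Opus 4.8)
The plan is to convert the statement about $\Ext$-groups of simple modules into information about the quiver and relations of $\Gamma$, and then to extract that information from the exchange triangles in the cluster category, using the $2$-Calabi--Yau property as the engine. Write $\Gamma=\End_{\CC}(T)^\op$ for a cluster-tilting object $T=T_1\oplus\cdots\oplus T_n$ with the $T_i$ indecomposable, let $G=\Hom_{\CC}(T,-)\colon\CC\to\mod\Gamma$, and let $S_i$ be the simple top of the indecomposable projective $P_i=G(T_i)$. The first step is the standard dictionary for a basic algebra presented by quiver and relations: $\dim\Ext^1_\Gamma(S_i,S_j)$ equals the number of arrows $i\to j$ in $Q_\Gamma$, and $\dim\Ext^2_\Gamma(S_i,S_j)$ equals the number of minimal relations from $i$ to $j$. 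So it suffices to show that the number of arrows $1\to 2$ is at least the number of minimal relations $2\to 1$. I record here that, as noted in Section 3.1, $Q_\Gamma$ has no loops; this is exactly what will force the resolutions below to be minimal.

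Next I would build the beginning of a minimal projective $\Gamma$-resolution of each simple $S_k$ out of the two exchange triangles at $T_k$ furnished by the theorem of Section 2.4, namely $T_k^{\ast}\xrightarrow{f}B_0^{(k)}\xrightarrow{g}T_k\to$ and $T_k\xrightarrow{s}B_1^{(k)}\xrightarrow{t}T_k^{\ast}\to$, where $g$ and $t$ are minimal right $\add(T/T_k)$-approximations. Applying the homological functor $G$ and using that $T$ is rigid, so that the potential connecting terms $\Ext^1_{\CC}(T,T_k)$ and $\Ext^1_{\CC}(T,T_k^{\ast})$ that come from summands of $T$ vanish, yields an exact sequence $G(B_1^{(k)})\to G(B_0^{(k)})\xrightarrow{g_{*}}P_k\to S_k\to 0$ in which $G(B_0^{(k)})$ and $G(B_1^{(k)})$ are projective. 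Because $Q_\Gamma$ has no loops, $T_k$ is not a summand of $B_0^{(k)}$ or $B_1^{(k)}$, the maps lie in the radical, and the sequence is minimal; writing $[X:T_j]$ for the multiplicity of $T_j$ as a summand, one reads off $\dim\Ext^1_\Gamma(S_k,S_j)=[B_0^{(k)}:T_j]$ and $\dim\Ext^2_\Gamma(S_k,S_j)=[B_1^{(k)}:T_j]$.

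With these identifications the proposition reduces to the categorical inequality $[B_0^{(1)}:T_2]\ge[B_1^{(2)}:T_1]$. The first number measures maps $T_2\to T_1$ modulo those factoring through $\add(T/T_1)$, detected by the right approximation $g\colon B_0^{(1)}\to T_1$ of triangle $(\mathrm{I})$ at vertex $1$; the second measures maps $T_2\to T_1$ modulo those factoring through $\add(T/T_2)$, detected by the left approximation $s\colon T_2\to B_1^{(2)}$ of triangle $(\mathrm{II})$ at vertex $2$. These are quotients of the same space $\rad_{\CC}(T_2,T_1)$ by two genuinely different subspaces, which is precisely why one gets an inequality rather than an equality. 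The final and crucial step is to compare the two by means of the $2$-Calabi--Yau duality $D\Ext^1_{\CC}(X,Y)\cong\Ext^1_{\CC}(Y,X)$ of Theorem~2.3(c): applied to the connecting maps of the triangles $(\mathrm{I})$ at vertex $1$ and $(\mathrm{II})$ at vertex $2$, it should produce a linear (not necessarily injective) map from the space governing $\Ext^2_\Gamma(S_2,S_1)$ to that governing $\Ext^1_\Gamma(S_1,S_2)$, giving the desired dimension bound.

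I expect this last comparison to be the main obstacle: the two approximations live at different vertices, so the $2$-CY isomorphism must be threaded carefully through the connecting homomorphisms $T_2^{\ast}\to T_2[1]$ and $T_1\to T_1^{\ast}[1]$, and one must check that the resulting correction terms always enlarge the subspace killed on the $\Ext^2$-side rather than the $\Ext^1$-side. As a consistency check in the finite-type case one can instead invoke the explicit relation description recalled earlier in this section: every minimal relation from $2$ to $1$ lies on a full oriented cycle, which is closed by an arrow $1\to 2$, and this assignment is injective with at most the claimed multiplicity, reproving $\dim\Ext^2_\Gamma(S_2,S_1)\le\dim\Ext^1_\Gamma(S_1,S_2)$ combinatorially.
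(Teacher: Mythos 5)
The paper itself states this proposition without proof (the Notes attribute it to Assem--Br\"ustle--Schiffler, Reiten--Todorov and [KR1]), so I am judging your argument against the standard one, whose skeleton you have correctly reproduced: apply $G=\Hom_{\CC}(T,-)$ to the two exchange triangles at $T_k$, get the exact sequence $G(B_1^{(k)})\to G(B_0^{(k)})\to P_k\to S_k\to 0$ with projective outer terms, and read off $\Ext$-groups. Your identification $\dim\Ext^1_\Gamma(S_k,S_j)=[B_0^{(k)}:T_j]$ is also correct (right-minimality of $g$ plus faithfulness of $G$ on $\add T$ make the first step a projective cover of $\rad P_k$). The genuine error is the next claim, that no loops forces the \emph{whole} sequence to be minimal, hence $\dim\Ext^2_\Gamma(S_k,S_j)=[B_1^{(k)}:T_j]$. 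This is false, and the paper's own example refutes it: for the cluster-tilted algebra with quiver $1\rightrightarrows 2$, $2\to 3$, $3\to 1$ one has $\dim\Ext^2_\Gamma(S_2,S_1)=1$, whereas $[B_1^{(2)}:T_1]=2$. The reason minimality fails at the second step is that the map $G(B_1^{(k)})\to\Omega^2S_k$ factors as the projective cover $G(B_1^{(k)})\twoheadrightarrow G(T_k^*)$ followed by $G(f)\colon G(T_k^*)\to G(B_0^{(k)})$, and $G(f)$ need not be injective: rotating the first exchange triangle shows that $\Ker G(f)$ is the image of $\Hom_{\CC}(T,T_k[-1])$, and rigidity of $T$ kills $\Hom_{\CC}(T,T_k[1])$, not $\Hom_{\CC}(T,T_k[-1])$. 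Only the inequality $\dim\Ext^2_\Gamma(S_k,S_j)\le[B_1^{(k)}:T_j]$ survives.

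This error then collapses your reduction, because the two numbers you propose to compare are always equal: under the equivalence $\add T\simeq\add\Gamma$, $[B_0^{(1)}:T_2]$ is the multiplicity of $P_2$ in the minimal right $\add(\Gamma/P_1)$-approximation of $P_1$, $[B_1^{(2)}:T_1]$ is the multiplicity of $P_1$ in the minimal left $\add(\Gamma/P_2)$-approximation of $P_2$, and since $Q_\Gamma$ has no loops both equal $\dim\bigl(\rad(P_2,P_1)/\rad^2(P_2,P_1)\bigr)$, the number of arrows $1\to 2$; your ``genuinely different subspaces'' coincide modulo $\rad^2$. So there is no inequality to be extracted at that stage, and the $2$-CY ``threading'' you defer (and concede you cannot complete) is a dead end rather than the missing step. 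The correct finish is shorter and needs no minimality at all: extend your exact sequence to a projective resolution with $G(B_1^{(2)})$ in degree $2$; then $\Ext^2_\Gamma(S_2,S_1)$ is a subquotient of $\Hom_\Gamma(G(B_1^{(2)}),S_1)$, giving $\dim\Ext^2_\Gamma(S_2,S_1)\le[B_1^{(2)}:T_1]$, and $[B_1^{(2)}:T_1]=\dim\Ext^1_\Gamma(S_1,S_2)$ by the left-approximation dictionary above. These two lines, using exactly the ingredients you assembled, prove the proposition and explain why it is an inequality rather than an equality.
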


It is known that if there is a relation from  the vertex of $S_2$ to the vertex of $S_1$ in a minimal set of relations for $\Gamma$, then $\Ext^2_{\Gamma}(S_2,S_1)\neq 0$ (see \cite{bo}\cite{birsm}). Hence there must be an arrow from the vertex of $S_1$ to the vertex of $S_2$ in the quiver of $\Gamma$. But conversely, there may not be such a relation for each arrow lying on a cycle.

\bigskip

\noindent {\bf Example} The quiver
$$
\xymatrix@C0.5cm@R0.5cm{ 1 \ar@<.5ex>[rr]^{\alpha_1} \ar@<-.5ex>[rr]_{\alpha_2}   & & 2
\ar[dl]^\beta   \\
                                                        & 3 \ar[ul]^\gamma  &
}
$$
is mutation equivalent to
$$
\xymatrix@C0.5cm@R0.5cm{
\bullet \ar[rr] \ar[dr]   &                   & \bullet     \\
                        & \bullet \ar[ur]     &
}
$$
and is hence the quiver of a cluster-tilted algebra. A minimal set of relations giving
rise to a cluster-tilted algebra is given by $\alpha_2\gamma = 0 = \beta\alpha_2 =
\gamma\beta$. Here we have $\dim \Ext^2_\Gamma(S_2,S_1) = 1 < 2 = \dim \Ext^1_\Gamma(S_1,S_2)$, and there is no relation associated with the arrow $\alpha_1$.

\bigskip

\subsection{Relationship with hereditary algebras}

There is a close relationship between cluster-tilted algebras and the associated
hereditary algebras, as we shall now discuss.

For tilted algebras $\Lambda$ there is a close relationship with the corresponding
hereditary algebras $H$, where two subcategories of $\modL$, coming from a torsion pair,
are also equivalent to subcategories of $\mod H$ belonging to a torsion pair. Here we
may have that $H$ is of infinite type, while $\Lambda$ is of finite type. Roughly
speaking, in the case of cluster-tilted algebras we have however the ``same number'' of
indecomposables. Here we first enlarge the category $\modH$ by passing to $\CH$ and hence
adding $n$ indecomposable objects, where $n$ is the number of nonisomorphic simple
$H$-modules. Then we ``remove'' $n$ other indecomposable objects from $\CH$ to obtain
$\modG$.

When $T$ is a tilting $H$-module and $\Lambda = \End_H(T)^\op$, then, as we have pointed
out, the functor $\Hom_H(T,\;): \modH \to \modL$ induces an equivalence between the
torsion class $\Fac T$ in $\modH$ and a torsionfree class in $\modL$, actually $\Sub
D(T)$. So this functor is far from being dense in general. But the situation is quite
different if we replace $\modH$ by the cluster category $\CH$, and the tilted algebra
$\Lambda$ by the cluster-tilted algebra $\Gamma = \End_\CH(T)^\op$. Here the triangulated
structure is important. While for an $H$-module $C$ there is usually no exact sequence
$T_1 \to T_0 \to C \to 0$ in $\modH$ with $T_0$ and $T_1$ in $\add T$, we have the
following crucial property for $\CH$. Here $\add T$ has as objects the summands of finite direct sums of copies of $T$.

\begin{lemma} \label{3.5}
Let $H = kQ$ be a finite dimensional hereditary $k$-algebra, and $\CH$ the associated
cluster category. Then for any $C$ in $\CH$ there is a triangle $T_1 \to T_0 \to C \to
T_1[1]$ with $T_0$, $T_1$ in $\add T$.
\end{lemma}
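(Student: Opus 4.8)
The plan is to build the triangle directly from a right $\add T$-approximation of $C$, and then to identify its third term as an object of $\add T$ by invoking the cluster-tilting property of $T$.

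First, since $\CH$ is a $\Hom$-finite Krull--Schmidt triangulated category and $\add T$ is the additive closure of a single object, $\add T$ is contravariantly finite in $\CH$; hence $C$ admits a minimal right $\add T$-approximation $g\colon T_0 \to C$ with $T_0 \in \add T$. Completing $g$ to a triangle in $\CH$ produces
$$T_1 \xrightarrow{a} T_0 \xrightarrow{g} C \xrightarrow{b} T_1[1]$$
for some object $T_1$. Everything then has the required shape, so it only remains to show $T_1 \in \add T$.

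The key step is to prove $\Ext^1_{\CH}(T, T_1) = 0$. Applying the cohomological functor $\Hom_{\CH}(T, -)$ to the triangle yields the exact sequence
$$\Hom_{\CH}(T, T_0) \xrightarrow{g_*} \Hom_{\CH}(T, C) \xrightarrow{b_*} \Ext^1_{\CH}(T, T_1) \to \Ext^1_{\CH}(T, T_0),$$
where I use $\Hom_{\CH}(T, T_1[1]) = \Ext^1_{\CH}(T, T_1)$. Because $g$ is a right $\add T$-approximation, $g_*$ is surjective, so by exactness the connecting map $b_*$ is zero; and because $T_0 \in \add T$ while $\Ext^1_{\CH}(T,T)=0$, the last term $\Ext^1_{\CH}(T,T_0)$ vanishes. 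Exactness then pins $\Ext^1_{\CH}(T, T_1)$ between a zero image and a zero target, forcing $\Ext^1_{\CH}(T, T_1) = 0$.

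Finally, since $T$ is cluster-tilting, the vanishing $\Ext^1_{\CH}(T, T_1) = 0$ implies $T_1 \in \add T$ by the defining property of a cluster-tilting object (equivalently, by maximal rigidity via the preceding Proposition). This gives the desired triangle $T_1 \to T_0 \to C \to T_1[1]$ with $T_0, T_1 \in \add T$. The main obstacle — and the only point beyond formal manipulation in the triangulated category — is this last step, where one must pass from the $\Ext^1$-vanishing to genuine membership in $\add T$; the two inputs that make the long exact sequence collapse as needed are the surjectivity of $g_*$ coming from the approximation property and the rigidity $\Ext^1_{\CH}(T,T)=0$.
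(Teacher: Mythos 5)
Your proof is correct and is essentially identical to the paper's own argument: both complete a right $\add T$-approximation $T_0 \to C$ to a triangle, apply $\Hom_{\CH}(T,-)$, and use the surjectivity from the approximation property together with $\Ext^1_{\CH}(T,T_0)=0$ to force $\Ext^1_{\CH}(T,T_1)=0$, whence $T_1 \in \add T$ by the cluster-tilting property. The only (immaterial) differences are that you take the approximation to be minimal and you spell out the existence of approximations, which the paper takes for granted.
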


\begin{proof}
Let $f: T_0 \to C$ be a right $\add T$-approximation in $\CH$, and complete to a triangle
$X \to T_0 \to C \to X[1]$. Apply $G = \Hom(T,\;)$ to get the exact sequence $(T,X) \to
(T,T_0) \xrightarrow{(T,f)} (T,C) \to (T,X[1]) \to (T,T_0[1]) = 0$. Since $(T,f)$ is
surjective, it follows that $\Ext^1(T,X) = \Hom(T,X[1]) = 0$, so that $X \in \add T$.
\end{proof}

Using this lemma one can show the following close relationship between $\CH$ and the
cluster-tilted algebra $\Gamma = \End_\CH(T)^\op$.

\begin{theorem} \label{3.6}
Let $H$ be a finite dimensional hereditary $k$-algebra, $T$ a cluster-tilting object in
the cluster category $\CH$, and $\Gamma = \End_\CH(T)^\op$ the associated cluster-tilted
algebra. Then $G = \Hom_\CH(T,\;): \CH \to \mod \Gamma$ induces an equivalence of
categories $\overline G: \Hom_\CH(T,\;): \CH/\add \tau T \to \mod \Gamma$.
\end{theorem}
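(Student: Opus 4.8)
The plan is to exhibit $\overline G$ as the canonical functor out of the quotient of $\CH$ by the ideal of morphisms factoring through $\add\tau T$, and then to verify that $G$ is dense and full and that this ideal is exactly the class of morphisms killed by $G$. Two preliminary facts drive everything. First, from the orbit construction $\CH=\DBH/F$ with $F=\tau^{-1}[1]$ one has $\tau\simeq[1]$ in $\CH$, so $\add\tau T=\add T[1]$ and
$$G(\tau T)=\Hom_\CH(T,T[1])=\Ext^1_\CH(T,T)=0$$
by rigidity of the cluster-tilting object $T$; hence every morphism factoring through $\add\tau T$ is killed by $G$. Second, since $\Gamma=\End_\CH(T)^{\op}$, the functor $G$ restricts to an equivalence of $\add T$ onto the projective $\Gamma$-modules (the standard Yoneda-type statement), and in particular $G$ is fully faithful on $\add T$.

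Next I would feed Lemma~\ref{3.5} into this. For $C$ in $\CH$ choose a triangle $T_1\xrightarrow{a}T_0\xrightarrow{f_0}C\xrightarrow{w}T_1[1]$ with $T_0,T_1\in\add T$ and $f_0$ a right $\add T$-approximation. Applying $G$ and using $\Hom_\CH(T,T_1[1])=\Ext^1_\CH(T,T_1)=0$ yields an exact sequence $G(T_1)\xrightarrow{G(a)}G(T_0)\xrightarrow{G(f_0)}G(C)\to0$, i.e.\ a projective presentation of $G(C)$, with $G(f_0)$ surjective. \emph{Density} is then immediate: given $N$ in $\mod\Gamma$ with a projective presentation $P_1\xrightarrow{p}P_0\to N\to0$, write $P_i=G(T_i)$ and $p=G(a)$ for some $a\colon T_1\to T_0$ in $\add T$, complete $a$ to a triangle, and read off $G(C)\cong\Coker G(a)\cong N$. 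For \emph{fullness}, given $\phi\colon G(C)\to G(C')$ I would lift $\phi\,G(f_0)$ through the surjection $G(f_0')$ (using projectivity of $G(T_0)$) to a map $G(T_0)\to G(T_0')$, recognize it as $G(h_0)$, then produce $h_1\colon T_1\to T_1'$ with $a'h_1=h_0a$ by the same lifting argument applied to $G(a)$; completing the resulting commutative square to a morphism of triangles gives $f\colon C\to C'$, and surjectivity of $G(f_0)$ forces $G(f)=\phi$.

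The main obstacle is \emph{faithfulness of $\overline G$}, namely that $G(f)=0$ forces $f$ to factor through $\add\tau T$. Here is the computation I expect to carry it: given $f\colon C\to C'$ with $G(f)=0$, use the approximation triangle $T_1'\xrightarrow{a'}T_0'\xrightarrow{f_0'}C'\to T_1'[1]$ for $C'$. Since $f_0'$ is a right $\add T$-approximation and $T_0\in\add T$, factor $ff_0=f_0'h$ for some $h\colon T_0\to T_0'$. Applying $G$ gives $G(f_0')\,G(h)=G(f)\,G(f_0)=0$, so $\Img G(h)\subseteq\Ker G(f_0')=\Img G(a')$; projectivity of $G(T_0)$ lets me factor $G(h)=G(a')\,k$, and since $G$ is an equivalence on $\add T$ this lifts to $h=a'\kappa$ with $\kappa\colon T_0\to T_1'$. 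Then $ff_0=f_0'a'\kappa=0$ because $f_0'a'=0$. Thus $ff_0=0$, and from the triangle $T_0\xrightarrow{f_0}C\xrightarrow{w}T_1[1]$ the map $f$ factors through $w\colon C\to T_1[1]\in\add\tau T$, as required.

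Combining these: $G$ is full and dense, and the ideal of morphisms sent to $0$ by $G$ coincides with the ideal of morphisms factoring through $\add\tau T$. By the general principle that a full dense functor induces an equivalence from the quotient by its kernel ideal, the induced functor $\overline G\colon\CH/\add\tau T\to\mod\Gamma$ is an equivalence of categories. The only points requiring care beyond formal diagram-chasing are the identification $\tau\simeq[1]$ in $\CH$ and the repeated use of rigidity $\Ext^1_\CH(T,T)=0$ to guarantee that the approximation triangles become honest projective presentations after applying $G$.
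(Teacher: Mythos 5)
Your proposal is correct, and it reproduces the paper's density argument exactly (lift a projective presentation of a $\Gamma$-module to a map $a\colon T_1\to T_0$ in $\add T$, complete to a triangle, and use $\Hom_{\CH}(T,T_1[1])=0$ to read off the cokernel), but your proof that $\ol{G}$ is full and faithful takes a genuinely different route. The paper does both at once by a single homological comparison: it applies $\Hom_{\CH}(-,Y)$ to the approximation triangle $T_1\to T_0\to X\to T_1[1]$ and $\Hom_\Gamma(-,G(Y))$ to the induced projective presentation of $G(X)$, glues the two exact rows along the Yoneda isomorphisms $(T_i,Y)\simeq ((T,T_i),G(Y))$, and concludes that $(X,Y)\to (G(X),G(Y))$ is surjective with kernel the image of $(T_1[1],Y)\to(X,Y)$, which is then matched against the defining presentation of $\Hom_{\CC/\add\tau T}(X,Y)$; no completion of squares to morphisms of triangles is ever needed. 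You instead argue constructively on the covariant side: fullness by lifting a $\Gamma$-morphism through the two projective presentations, descending the lifts to $\add T$ via the equivalence $\add T\simeq$ projective $\Gamma$-modules, and invoking the axiom that completes a commutative square to a morphism of triangles; and the kernel identification by a separate chase showing $G(f)=0$ forces $ff_0=0$, hence a factorization of $f$ through $w\colon C\to T_1[1]$. Both are sound. The paper's version buys brevity and stays entirely within long-exact-sequence arguments; yours buys explicitness, and in particular it proves a point the paper leaves implicit, namely that any morphism factoring through \emph{some} object of $\add\tau T$ already factors through the \emph{particular} object $T_1[1]$ of the approximation triangle -- which is exactly what is needed for the exactness at $(X,Y)$ of the second row in the paper's final diagram. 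Your concluding appeal to the general principle that a full, dense additive functor induces an equivalence from the quotient by its kernel ideal is the right way to assemble the statement.
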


\begin{proof}
We illustrate the use of Lemma~\ref{3.5} by giving a proof of this theorem. So let
$C$ be in $\modG$, and consider a (minimal) projective presentation $(T,T_1)
\xrightarrow{(T,f)} (T,T_0) \to C \to 0$, where $f: T_1 \to T_0$ is a map in $\add T$.
Complete to a triangle $T_1 \to T_0 \to X \to T_1[1]$, and apply $\Hom_\CH(T,\;)$ to get
the exact sequence $(T,T_1) \to (T,T_0) \to (T,X) \to (T,T_1[1]) = 0$, so that $C \simeq
(T,X)$. This shows that $G$ is dense.

Then we show that the induced functor $\ol G \colon \CC/ \add \tau T\to \mod \Gamma$ is full and faithful. So let $X$ and $Y$ be objects in $\CC$. We have as before a triangle $T_1\to T_0\to X\to T_1[1]$, which induces an exact sequence $(T,T_1)\to (T,T_0)\to (T,X)\to 0$, and hence the following exact commutative diagram of $\Gamma$-modules

\[\xymatrix@C=12pt{0\ar[r]&(T_1[1],Y)\ar[r]&(X,Y)\ar[r]\ar[d]&(T_0,Y)\ar[r]\ar[d]^-{u} & (T_1,Y)\ar[d]^-{v}\\
&& (G(X),G(Y))\ar[r]&((T,T_0),G(Y))\ar[r]&((T,T_1),G(Y)).
}\]
Here $u$ and $v$ are isomorphisms since we have a natural isomorphism $(T,Y)\simeq ((T,T),(T,Y))=((T,T),G(Y))$. Hence we get an exact commutative diagram 
\[\xymatrix@C=12pt{(T_1[1],Y)\ar[r]\ar@{=}[d]&(X,Y)\ar[r]\ar@{=}[d]&(G(X),G(Y))\ar[r]&0\\
(T_1[1],Y)\ar[r]&(X,Y)\ar[r]&\Hom_{\CC/\add \tau T}(X,Y)\ar[u]\ar[r]&0
}\]
where the first sequence comes from the previous diagram and the second one from the definition of morphisms in $\CC/\add \tau T$. Hence there is induced an isomorphism $\Hom_{\CC/\add \tau T}(X,Y)\to (G(X),G(Y))$ showing that $\ol G$ is full and faithful. 
\end{proof}

This result has some similarity with the equivalences associated with the BGP reflection
functors, which induce equivalences between subcategories obtained by leaving out only
one indecomposable object, and where the AR-quivers are closely related. And actually in
this context of cluster-tilted algebras  there is also a surprisingly close connection
between the AR-quivers for $H$ and for $\Gamma$, via $\CH$. This can be used to rule out
the possibility for a given algebra to be cluster-tilted.

\begin{theorem} \label{3.7}
Let the notation be as before. Then the AR-quiver for $\Gamma$ is obtained by dropping,
in the AR-quiver of $\CC$, the vertices corresponding to the objects $\tau T_i$ for the indecomposable
summands $T_i$ of the cluster-tilting object $T$.
\end{theorem}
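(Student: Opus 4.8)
The plan is to deduce everything from the equivalence $\overline G\colon \CH/\add\tau T \to \mod\Gamma$ of Theorem~\ref{3.6}. Since an equivalence of Krull--Schmidt categories preserves the radical, irreducible maps and almost split sequences, it induces an isomorphism of AR-quivers, so it suffices to identify the AR-quiver of the additive quotient $\CH/\add\tau T$ with the full subquiver of the AR-quiver of $\CH$ on the vertices different from the $\tau T_i$. On the level of vertices this is immediate: the indecomposable objects of $\CH/\add\tau T$ are exactly the indecomposables of $\CH$ other than the $\tau T_i$, since the latter become zero in the quotient (indeed $G(\tau T_i)=\Hom_\CH(T,T_i[1])=\Ext^1_\CH(T,T_i)=0$, using $\tau\simeq[1]$ and rigidity of $T$). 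Thus the content is entirely about the arrows.

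The key step is to show that $G=\Hom_\CH(T,-)$ carries almost split triangles of $\CH$ to almost split sequences of $\mod\Gamma$. Fix an indecomposable $C$ and let $\tau C \xrightarrow{u} B \xrightarrow{v} C \xrightarrow{w} \tau C[1]$ be its almost split triangle (Theorem~2.4(b)), and apply the cohomological functor $G$. I would first observe that when $C\notin\add T$ every $\varphi\colon T\to C$ fails to be a split epimorphism, so $w\varphi=0$ by the defining property of $w$; hence $G(w)=0$ and $G(v)$ is surjective. Dually, the same remark applied to the almost split triangle ending at $C[-1]$, whose connecting map is $w[-1]$, shows $G(w[-1])=0$, whence $G(u)$ is injective, precisely when $C\notin\add T[1]=\add\tau T$. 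Therefore, for $C\notin\add T\cup\add\tau T$ one obtains a short exact sequence $0\to G(\tau C)\to G(B)\to G(C)\to 0$ of $\Gamma$-modules with $G(\tau C)$, $G(C)$ indecomposable. In the remaining boundary case $C=T_i$ the same computation, now with $G(\tau T_i)=0$, yields $G(B)\cong\rad P_i$ for the projective $P_i=G(T_i)$.

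It then remains to prove that this short exact sequence is almost split. I would transport the right almost split property of $v$ through the equivalence: a non-isomorphism $W\to G(C)$ in $\mod\Gamma$ lifts to a map $X\to C$ in the quotient, which is not a split epimorphism and hence factors through $v$ in $\CH$, and the factorization pushes back down. Non-splitness is where the quotient structure is used: a splitting would produce $\bar s\colon C\to B$ with $v\bar s=\Id_C+h$ where $h$ factors through $\add\tau T$; since $C$ is indecomposable and not in $\add\tau T$, such an $h$ lies in $\rad\End_\CH(C)$, so $\Id_C+h$ is invertible and $v$ would be a retraction, contradicting that it is the middle map of an almost split triangle. Thus the projected sequence is almost split in $\CH/\add\tau T$, and $\overline G$ carries it to the almost split sequence ending at $G(C)$, respectively to the inclusion $\rad P_i\hookrightarrow P_i$.

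Finally I would read off the arrows. Every arrow of the AR-quiver of $\Gamma$ ends at some vertex $G(Y)$, and its tails are the indecomposable summands of the middle term of the almost split sequence ending at $G(Y)$, or of $\rad P_i$ when $G(Y)=P_i$. By the previous step these summands are exactly the summands $B_k\notin\add\tau T$ of the middle term $B$ of the almost split triangle ending at $Y$ in $\CH$; each $B_k\to Y$ is an arrow of the AR-quiver of $\CH$ between surviving vertices, and no further arrows arise. Hence the AR-quiver of $\Gamma$ is precisely the full subquiver of the AR-quiver of $\CH$ obtained by dropping the vertices $\tau T_i$, as claimed. I expect the main obstacle to be the second step, namely verifying that $G$ is exact enough on almost split triangles and that its image is genuinely almost split, since this is exactly where rigidity of $T$, the identification $\tau\simeq[1]$, and the structure of $\CH/\add\tau T$ must all be combined.
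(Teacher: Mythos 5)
Your proposal is correct and takes essentially the approach the paper intends: the paper states Theorem~\ref{3.7} without proof as a companion to the equivalence $\overline G$ of Theorem~\ref{3.6}, and your argument --- that $\Hom_\CH(T,\;)$ carries almost split triangles to almost split sequences away from $\add T\cup\add\tau T$, and to $\rad P_i\hookrightarrow P_i$ at the summands of $T$ --- is exactly how this is established in the source \cite{bmr1}. The two points you leave implicit (that the image of $G(v)$ equals $\rad P_i$, and that a non-split exact sequence with indecomposable end terms whose epimorphism is right almost split is automatically almost split) are standard facts, so there is no gap.
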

 
We then get the following information about cluster-tilted algebras. 
\begin{corollary}
Let $T$ be a cluster-tilting object in the cluster category $\CC$, and $\Gamma=\End_{\CC}(T)^{\op}$ the corresponding cluster-tilted algebra. Let $T=T_1\oplus \cdots\oplus T_n$, where the $T_i$ are indecomposable.
 
\begin{enumerate}
\item[(a)] The indecomposable projective $\Gamma$-modules are of the form $P_i=\Hom_{\CC}(T,T_i)$, and the indecomposable injective $\Gamma$-modules of the form $I_i=\Hom_{\CC}(T,\tau^2 T_i)$. They are related by $P_i/\rad P_i\simeq \soc I_i$.
\item[(b)]  $\Gamma$ is selfinjective if and only if $\tau^2T\simeq T$, and $\Gamma$ is weakly symmetric, that is $P_i/\rad P_i\simeq \soc I_i$ if and only if $\tau^2T_i\simeq T_i$ for $i=1,\ldots,n$.
 
\end{enumerate}
\end{corollary}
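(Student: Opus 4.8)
The plan is to push everything through the functor $G=\Hom_\CC(T,-)$ together with the equivalence $\ol G\colon\CC/\add\tau T\to\mod\Gamma$ of Theorem~\ref{3.6}, and to obtain the injectives by combining the Nakayama functor with Serre duality in the $2$-Calabi--Yau category $\CC$. First I would settle the projective half of (a): since $G$ is additive and $G(T)=\Hom_\CC(T,T)=\End_\CC(T)$ is, as a left module over $\Gamma=\End_\CC(T)^{\op}$, precisely the regular representation $_\Gamma\Gamma$, the decomposition $T=T_1\oplus\dots\oplus T_n$ gives $\Gamma=\bigoplus_i G(T_i)$ as a direct sum of indecomposable projectives. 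Hence the $P_i=\Hom_\CC(T,T_i)$ are exactly the indecomposable projective $\Gamma$-modules, with simple top $S_i$.

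For the injectives I would invoke the Nakayama functor $\nu=D\Hom_\Gamma(-,\Gamma)$, which restricts to an equivalence from projectives to injectives sending $P_i$ to the indecomposable injective $I_i$ with $\soc I_i\simeq\top P_i=S_i$; this already yields the asserted relation $P_i/\rad P_i\simeq\soc I_i$. It remains to identify $\nu P_i$. The module $\Hom_\Gamma(P_i,\Gamma)$ is the indecomposable projective \emph{right} $\Gamma$-module at vertex $i$, and under the defining isomorphism $\Gamma=\End_\CC(T)^{\op}$ this is $\Hom_\CC(T_i,T)$; thus $\nu P_i\simeq D\Hom_\CC(T_i,T)$. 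Now the Serre functor of $\CC$ is the shift $[2]$, and since $\tau\simeq[1]$ in $\CC$ (the defining relation $F=\tau^{-1}[1]\simeq\id$ becomes an isomorphism in the orbit category), the Serre functor equals $\tau^2$. Serre duality $D\Hom_\CC(T_i,T)\simeq\Hom_\CC(T,\tau^2T_i)$ then gives $I_i\simeq\Hom_\CC(T,\tau^2T_i)=G(\tau^2T_i)$, completing (a).

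For (b) I would first record that the rigidity of $T$ forces $\tau T_j\notin\add T$ for every $j$ (otherwise $\tau T_j=T_i$ would give $\Ext^1_\CC(T_i,T_j)=\Hom_\CC(T_i,\tau T_j)\neq0$), whence both $T_i\notin\add\tau T$ and $\tau^2T_i\notin\add\tau T$. Consequently all the objects $T_i$ and $\tau^2T_i$ survive in $\CC/\add\tau T$, and the equivalence $\ol G$ detects both isomorphisms and $\add$-membership among them. Then $\Gamma$ is selfinjective iff each $P_i$ is injective, i.e. $\add\{G(T_i)\}=\add\{G(\tau^2T_i)\}$, which via $\ol G$ is $\add T=\add\tau^2T$; as $\tau^2$ is an autoequivalence permuting indecomposables, this holds iff $\tau^2T\simeq T$. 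Likewise $\Gamma$ is weakly symmetric iff the projective cover and injective envelope of each simple coincide, i.e. $P_i\simeq I_i$, that is $G(T_i)\simeq G(\tau^2T_i)$, which via $\ol G$ is $\tau^2T_i\simeq T_i$ for every $i$.

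The main obstacle is the injective computation in (a): one must pin down that the Serre functor of $\CC$ is $\tau^2$, i.e. upgrade the $\Ext^1$-level Calabi--Yau property of Theorem~2.4(c) to the $\Hom$-level Serre duality $D\Hom_\CC(X,Y)\simeq\Hom_\CC(Y,\tau^2X)$ in the correct variance, and keep the left/right-module and $(-)^{\op}$ bookkeeping consistent so that $\Hom_\Gamma(P_i,\Gamma)$ genuinely corresponds to $\Hom_\CC(T_i,T)$ (source $T_i$) rather than to $\Hom_\CC(T,T_i)$. Once Serre duality is available in this form, the remaining verifications are formal.
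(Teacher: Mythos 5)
Your proof is correct and follows essentially the same route as the paper: the injective computation is exactly the paper's chain $D\Hom_{\Gamma}(P_i,\Gamma)\simeq D\Hom_{\CC}(T_i,T)\simeq\Hom_{\CC}(T,T_i[2])\simeq\Hom_{\CC}(T,\tau^2T_i)$, using the Nakayama functor together with Serre duality and $\tau\simeq[1]$ in $\CC$. Your elaboration of (b) — checking via rigidity that $T_i$ and $\tau^2T_i$ are not killed in $\CC/\add\tau T$ so that $\ol G$ reflects isomorphisms — is a careful filling-in of what the paper dismisses as a ``direct consequence of (a),'' but it is the same argument in substance.
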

\begin{proof}
$(a)$ The first claim is clear. For $P_i=\Hom_{\CC}(T,T_i)$ we have $D\Hom_{\Gamma}(P_i,\Gamma)\simeq D\Hom_{\CC}(T_i,T)\simeq\Hom_{\CC}(T,T_i[2])\simeq\Hom_{\CC}(T,\tau^2 T_i)=I_i$. This shows the other claims. 

$(b)$ This is direct consequence of $(a)$.
\end{proof}

The selfinjective cluster-tilted algebras have been classified in \cite{rin3}.

There is the following nice consequence of Theorem 3.7. Let $T=T_1\oplus\cdots\oplus T_n$ be a basic cluster-tilting object in a cluster category $\C_Q$, with the $T_j$ indecomposable. Let $T^*=T/T_i\oplus T_i^*$ be a cluster-tilting object with $T_i^*\not\simeq T_i$. Let $\Gamma=\End_{\C_Q}(T)^{\op}$ and $\Gamma^*=\End_{\C_Q}(T^*)^{\op}$. Let $S$ be the simple $\Gamma$-module associated with $T_i$ and $S^*$ the simple $\Gamma^*$-module associated with $T_i^*$.

\begin{theorem}
With the above notation and assumptions we have an equivalence of categories 
\[\mod\Gamma/\add S\to\mod\Gamma^*/\add S^*,\]
where the maps in $\mod\Gamma/\add S$ are the maps in $\mod\Gamma$ modulo the maps which factor through an object in $\add S$, and the maps in $\mod \Gamma^*/\add S^*$ are the maps in $\mod \Gamma^*$ modulo the maps which factor through an object in $\add S$. 
\end{theorem}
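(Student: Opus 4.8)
The plan is to realize both module categories as additive quotients of the single category $\C_Q$ (with $H=kQ$) via the equivalence of Theorem~\ref{3.6}, and then to reduce the assertion to an equality between the additive subcategories being divided out. Writing $G=\Hom_{\C_Q}(T,-)$ and $G^{*}=\Hom_{\C_Q}(T^{*},-)$, Theorem~\ref{3.6} furnishes equivalences
\[
\ol{G}\colon \C_Q/\add\tau T\to\mod\Gamma
\qquad\text{and}\qquad
\ol{G}^{*}\colon \C_Q/\add\tau T^{*}\to\mod\Gamma^{*}.
\]
Since an equivalence of additive categories sends additive quotient categories to additive quotient categories, it is enough to pin down the objects of $\C_Q$ whose images under $\ol{G}$ and $\ol{G}^{*}$ are $S$ and $S^{*}$, and then to compare the two resulting subcategories.

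First I would show that $\ol{G}(\tau T_i^{*})\simeq S$. In $\C_Q$ the functor $F=\tau^{-1}[1]$ is the identity, so $\tau T_i^{*}\simeq T_i^{*}[1]$ and hence
\[
G(\tau T_i^{*})=\Hom_{\C_Q}(T,T_i^{*}[1])=\Ext^1_{\C_Q}(T,T_i^{*})=\bigoplus_{j}\Ext^1_{\C_Q}(T_j,T_i^{*}).
\]
For $j\neq i$ both $T_j$ and $T_i^{*}$ are summands of the cluster-tilting object $T^{*}=(T/T_i)\oplus T_i^{*}$, so those terms vanish, while $\Ext^1_{\C_Q}(T_i,T_i^{*})$ is one-dimensional because $(T_i,T_i^{*})$ is an exchange pair (Theorem~2.10). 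Thus $G(\tau T_i^{*})$ is one-dimensional and supported at the vertex $i$ (only $e_i$ acts as the identity), that is, it is the simple module $S$. Note that $\tau T_i^{*}\notin\add\tau T$, since $T_i^{*}\not\simeq T_j$ for every $j$, so $\tau T_i^{*}$ is a genuine nonzero object of $\C_Q/\add\tau T$. The symmetric computation gives $\ol{G}^{*}(\tau T_i)\simeq S^{*}$.

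Next I would pass to the quotients. Because $\ol{G}$ is an equivalence restricting to $\add\tau T_i^{*}\to\add S$, it induces an equivalence $\mod\Gamma/\add S\simeq(\C_Q/\add\tau T)/\add\tau T_i^{*}$, and the elementary identity $(\cd/\add X)/\add Y\simeq\cd/\add(X\oplus Y)$ for additive quotients gives
\[
\mod\Gamma/\add S\ \simeq\ \C_Q/\add(\tau T\oplus\tau T_i^{*}).
\]
Symmetrically $\mod\Gamma^{*}/\add S^{*}\simeq\C_Q/\add(\tau T^{*}\oplus\tau T_i)$. The proof then closes on the observation that, since $\tau T=\tau(T/T_i)\oplus\tau T_i$ and $\tau T^{*}=\tau(T/T_i)\oplus\tau T_i^{*}$, both divided-out subcategories coincide:
\[
\add(\tau T\oplus\tau T_i^{*})=\add\bigl(\tau(T/T_i)\oplus\tau T_i\oplus\tau T_i^{*}\bigr)=\add(\tau T^{*}\oplus\tau T_i).
\]
Hence the two quotients of $\C_Q$ are literally the same category, and composing the displayed equivalences yields $\mod\Gamma/\add S\simeq\mod\Gamma^{*}/\add S^{*}$.

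The step I expect to be the crux is the module-theoretic identification $\ol{G}(\tau T_i^{*})\simeq S$: everything after it is formal bookkeeping with additive quotients, but this identification is exactly what makes ``dividing out the simple at the exchanged vertex'' on the algebra side correspond to ``dividing out $\tau$ of the exchanged summand'' on the categorical side. It rests precisely on the exchange-pair input (one-dimensionality of $\Ext^1_{\C_Q}(T_i,T_i^{*})$) together with the vanishing $\Ext^1_{\C_Q}(T/T_i,T_i^{*})=0$ coming from $T^{*}$ being cluster-tilting. Once this is in place, the symmetry between $T$ and $T^{*}$ makes the final coincidence of subcategories transparent.
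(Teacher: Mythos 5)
Your proof is correct and is essentially the argument the paper has in mind: it derives the theorem from the quotient equivalence $\C_Q/\add\tau T\simeq\mod\Gamma$ of Theorem~\ref{3.6}, identifying both $\mod\Gamma/\add S$ and $\mod\Gamma^*/\add S^*$ with $\C_Q/\add\tau(T\oplus T_i^*)=\C_Q/\add\tau(T^*\oplus T_i)$. The crucial step, $\ol{G}(\tau T_i^*)\simeq S$ (one-dimensional by the exchange-pair characterization, concentrated at vertex $i$ by rigidity of $T^*$), is exactly the intended input, and your bookkeeping with additive quotients is sound.
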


In the terminology of \cite{rin2} we say that $\Gamma$ and $\Gamma^*$ are \textit{nearly Morita equivalent}. Note that this gives a generalization of the Bernstein-Gelfand-Ponomarev equivalence discussed earlier.   

We have now seen a way of constructing $\modG$ from $\modH$, via $\CH$. There is also
another way, going instead via the tilted algebra $\Lambda = \End_H(T)^\op$, when $T$ is
a tilting $H$-module.

\begin{theorem} \label{3.8}
Let $T$ be a tilting module over the hereditary algebra $H$, and $\Lambda =
\End_H(T)^\op$.

\begin{enumerate}
\item The cluster-tilted algebra $\Gamma = \End_\CH(T)^\op$ is isomorphic to the
trivial extension algebra $\Lambda \ltimes \Ext^2_\Lambda(D\Lambda,\Lambda)$.
\item The quiver of $\Gamma$ is obtained from the quiver with relations of
$\Lambda$, by adding an arrow from $j$ to $i$ for each relation from $i$ to $j$ in a minimal set of relations.
\end{enumerate}
\end{theorem}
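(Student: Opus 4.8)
The plan is to compute $\End_{\CC}(T)$ directly from the description of morphisms in the cluster category, recognise it as a trivial extension, and then transport the relevant bimodule to $\mod\Lambda$ through the tilting derived equivalence.

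First I would use that the summands of $T$ lie in the fundamental domain, so that
\[
\End_{\CC}(T)=\bigoplus_{i\in\Z}\Hom_{\DBH}(T,F^iT),\qquad F=\tau^{-1}[1].
\]
Since $H$ is hereditary, $\Hom_{\DBH}(T,F^iT)=\Ext^i_H(T,\tau^{-i}T)$ vanishes unless $i=0,1$, so only the two summands $\End_H(T)$ and $N:=\Hom_{\DBH}(T,FT)$ survive. The composite of two elements of $N$ lands in $\Hom_{\DBH}(T,F^2T)=\Ext^2_H(T,\tau^{-2}T)=0$, so $N$ is a square-zero ideal and the degree-$0$ part $\End_H(T)$ is a complementary subalgebra. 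Hence $\End_{\CC}(T)=\End_H(T)\ltimes N$, and passing to opposite algebras gives $\Gamma=\Lambda\ltimes N'$, where $N'$ is $N$ regarded as a $\Lambda$-bimodule. It then remains to identify $N'$ with $\Ext^2_\Lambda(D\Lambda,\Lambda)$.

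For this identification I would invoke the tilting derived equivalence of Happel, $\Phi=\R\Hom_H(T,-)\colon\DBH\xrightarrow{\sim}D^b(\Lambda)$ with $\Phi(T)\cong\Lambda$, recalling that $\Lambda$ is tilted, hence $\gdim\Lambda\le2$. Because a triangle equivalence commutes with Serre functors and on $\DBH$ the Serre functor is $\tau[1]$, the translate $\tau$ is carried to $\tau_\Lambda=\nu[-1]$ on $D^b(\Lambda)$, where $\nu^{-1}=\R\Hom_\Lambda(D\Lambda,-)$ is the inverse derived Nakayama functor. Thus
\[
\Phi(FT)=\Phi(\tau^{-1}T[1])=\tau_\Lambda^{-1}(\Lambda)[1]=\R\Hom_\Lambda(D\Lambda,\Lambda)[2],
\]
and applying $\Hom_{D^b(\Lambda)}(\Lambda,-)$ yields $N=\Hom_{\DBH}(T,FT)\cong\Ext^2_\Lambda(D\Lambda,\Lambda)$. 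The delicate point, which I expect to be the main obstacle, is to upgrade this to an isomorphism of $\Lambda$-bimodules: on $N$ the left action is postcomposition twisted by $F$ while the right action is ordinary precomposition, and one must check that under $\Phi$, Serre duality, and the passage to opposite algebras these match the natural bimodule structure on $\Ext^2_\Lambda(D\Lambda,\Lambda)$. This requires tracking the functoriality of $\Phi$ in both variables, not merely comparing dimensions, and is where the real work lies. This proves (1).

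For (2) I would use the general description of the quiver of a trivial extension. Since $N$ is nilpotent, $\Gamma$ and $\Lambda$ have the same vertices, and from $N^2=0$ one gets $\rad\Gamma/\rad^2\Gamma=(\rad\Lambda/\rad^2\Lambda)\oplus\overline N$ with $\overline N=N/(\rad\Lambda\cdot N+N\cdot\rad\Lambda)$. Hence the arrows of $\Gamma$ are those of $\Lambda$ together with one new arrow $j\to i$ for each basis vector of $e_i\overline N e_j$. It remains to match these with the relations of $\Lambda$: by Bongartz, since $\gdim\Lambda\le2$, a minimal system of relations from $i$ to $j$ has cardinality $\dim\Ext^2_\Lambda(S_i,S_j)$, so the claim reduces to the bimodule-top computation $\dim e_i\overline N e_j=\dim\Ext^2_\Lambda(S_i,S_j)$. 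I would establish this by resolving the simples by injectives and projectives and using that over a global-dimension-$2$ algebra $\Ext^2$ detects only tops and socles, so that the bimodule top of $\Ext^2_\Lambda(D\Lambda,\Lambda)$ decomposes as $\bigoplus_{i,j}\Ext^2_\Lambda(S_i,S_j)$; this is the other computation requiring care. Combining, each minimal relation from $i$ to $j$ produces exactly one new arrow $j\to i$, as claimed.
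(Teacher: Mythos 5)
The paper itself gives no proof of Theorem 3.8 --- it is quoted from the source [ABS] cited in the Notes to Section 3 --- so there is no internal argument to compare against; judged on its own, your proposal is correct and is essentially the argument of that source. The skeleton is right at every stage: the orbit-category Hom formula plus the fundamental-domain vanishing (stated in Section 2.2 of the paper) reduces $\End_{\CC}(T)$ to the degree $0$ and degree $1$ parts; the grading forces the degree $1$ part $N$ to be a square-zero ideal with $\End_H(T)$ as a complement, giving the trivial-extension structure; and the identification $N\simeq\Ext^2_\Lambda(D\Lambda,\Lambda)$ via Happel's equivalence, uniqueness of Serre functors, and $\tau^{-1}[1]=\nu^{-1}[2]$ is exactly how [ABS] proceed. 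Two small cautions. First, writing $\Hom_{\DBH}(T,F^iT)=\Ext^i_H(T,\tau^{-i}T)$ is slightly loose when $T$ has injective summands (then $\tau^{-i}T$ need not be a module, as $\tau^{-1}$ of an injective is a shifted projective); the clean statement is just the fundamental-domain formula, and the vanishing outside $i=0,1$ still holds. Second, the two points you flag as ``where the real work lies'' --- tracking the bimodule structure through the equivalence and the passage to opposite rings, and showing that the bimodule top of $\Ext^2_\Lambda(D\Lambda,\Lambda)$ is $\bigoplus_{i,j}\Ext^2_\Lambda(S_i,S_j)$ (this genuinely uses $\gdim\Lambda\le 2$ via right-exactness of $\Ext^2$ in both variables, and fails in general) --- are indeed the technical content of the cited paper, and your strategy for both (naturality of the Serre-functor isomorphism; Bongartz's count of minimal relations by $\dim\Ext^2_\Lambda(S_i,S_j)$, which incidentally needs no global-dimension hypothesis) is the correct one, so these are honest deferrals rather than wrong turns.
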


\bigskip
\noindent {\bf Example:} If we have the tilted algebra
$$
\xymatrix@R0.5cm@C0.5cm{
        & \bullet \ar[dl] \ar[dr] \ar@{.}[dd] &       \\
\bullet \ar[dr]   &                   & \bullet \ar[dl] \\
        & \bullet                 &
}
$$
the associated cluster-tilted algebra has quiver
$$
\xymatrix@R0.5cm@C0.5cm{
        & \bullet \ar[dl] \ar[dr] &       \\
\bullet \ar[dr]   &           & \bullet \ar[dl] \\
        & \bullet \ar[uu]     &
}
$$
The same rule applies when we start with a canonical algebra instead of a tilted algebra.

\bigskip
\noindent {\bf Example:} Let $\la$ be a canonical algebra over $k$ given by the quiver
$$\xymatrix@R0.1cm@C0.5cm{
  & \bullet\ar[rr]&& \bullet\ar[ddr]&\\
&&\bullet\ar[drr]&&\\
  a\ar[uur]\ar[urr]\ar[rr]\ar[drdr]\ar[drr]&&\bullet\ar[rr]&& b\\
  &&\bullet \ar[urr]&&\\
  && \bullet \ar[urur] &&
}$$
where $\la=\End_{\coh \mathbb{X}}(T)^{\op}$ for a tilting object $T$ in the associated
category $\coh \mathbb{X}$ of coherent sheaves on a weighted projective line. Then we
obtain the quiver for the algebra $\End_{{\cc}_{\coh\mathbb{X}}}(T)^{\op}$ by adding $5-2=3$
arrows from $b$ to $a$ in the above quiver.
\bigskip

\subsection{Homological properties}
While the tilted algebras have global dimension at most two, it turns out that the
cluster-tilted algebras typically have infinite global dimension. But they have other
homological similarities with hereditary algebras.

Recall that a finite dimensional algebra $\Gamma$ is \emph{Gorenstein} of dimension at
most one if $\Id {_\Lambda \Lambda} \leq 1$ and $\Id \Lambda_\Lambda \leq 1$. (Using
tilting theory, one knows that the last condition can be dropped, see \cite {ar2}). Clearly hereditary
algebras satisfy this property, and we also have the following.

\begin{theorem} \label{3.9}
The cluster-tilted algebras are Gorenstein of dimension at most one.
\end{theorem}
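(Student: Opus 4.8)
The plan is to resolve the regular module $_\Gamma\Gamma = \bigoplus_{i} P_i$, where $P_i = \Hom_\CC(T,T_i)$, by injectives and to show that the resolution can be taken of length at most one. By the remark attributed to \cite{ar2}, it suffices to bound the injective dimension on one side, so I will only establish $\Id\, {}_\Gamma\Gamma \le 1$. Since $_\Gamma\Gamma = \bigoplus_i P_i$, this amounts to proving $\Id_\Gamma P_i \le 1$ for each indecomposable summand $T_i$ of the cluster-tilting object $T$.

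The geometric input is a left-approximation analogue of Lemma~\ref{3.5}: for every object $C$ of $\CC$ there is a triangle $C \to T^0 \to T^1 \to C[1]$ with $T^0, T^1 \in \add T$. I would obtain this by running the proof of Lemma~\ref{3.5} in the opposite category $\CC^{\op}$, which is again a $\Hom$-finite $2$-CY triangulated category in which $T$ is cluster-tilting (with $\End_{\CC^{\op}}(T) = \Gamma^{\op}$); a triangle of the type in Lemma~\ref{3.5} in $\CC^{\op}$ translates, under the reversal of arrows and inversion of the shift, into exactly such a triangle in $\CC$. Applying this to $C = \tau^{-2}T_i$ and then the autoequivalence $\tau^2$ of $\CC$ yields a triangle
\[
T_i \longrightarrow \tau^2 T^0 \longrightarrow \tau^2 T^1 \longrightarrow T_i[1], \qquad T^0, T^1 \in \add T .
\]

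Now I apply the homological functor $G = \Hom_\CC(T,\ )$ to this triangle. Using that $[1] \simeq \tau$ in the orbit category $\CC$, together with the rigidity of $T$, both outer terms of the resulting long exact sequence vanish: on the left $\Hom_\CC(T,(\tau^2 T^1)[-1]) \simeq \Hom_\CC(T, T^1[1]) = \Ext^1_\CC(T, T^1) = 0$, and on the right $\Hom_\CC(T, T_i[1]) = \Ext^1_\CC(T, T_i) = 0$, both because $T^0, T^1, T_i \in \add T$. Hence the long exact sequence collapses to a short exact sequence of $\Gamma$-modules $0 \to P_i \to \Hom_\CC(T, \tau^2 T^0) \to \Hom_\CC(T, \tau^2 T^1) \to 0$. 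By the corollary following Theorem~\ref{3.7}, the indecomposable injective $\Gamma$-modules are precisely those of the form $\Hom_\CC(T, \tau^2 T_j)$, so the two right-hand terms are injective; this exhibits an injective resolution of $P_i$ of length one, giving $\Id_\Gamma P_i \le 1$.

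The step I expect to be the main obstacle is the left-approximation version of Lemma~\ref{3.5}, that is, checking that the cone of a minimal left $\add T$-approximation again lies in $\add T$. This is exactly where the cluster-tilting (maximal rigidity) hypothesis together with the $2$-CY duality is used in an essential way; a naive completion of a left approximation only shows $\Ext^1_\CC(T, T^1)$ injects into $\Hom_\CC(T, C[2])$, which need not vanish, so the passage to $\CC^{\op}$ (or an equivalent invocation of Serre duality) is what makes the argument go through. Once the correct triangle is in place, the remainder is the purely formal computation with the long exact sequence carried out above.
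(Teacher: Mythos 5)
Your proof is correct. For comparison: the paper does not actually prove Theorem~\ref{3.9} --- it is quoted from Keller--Reiten \cite{kr1} --- but it does supply exactly the ingredients you use, namely Lemma~\ref{3.5} and the identification $I_i=\Hom_{\CC}(T,\tau^2T_i)$ of the indecomposable injectives in the corollary to Theorem~\ref{3.7}; your argument is in substance the argument of \cite{kr1}, run on the projective side (injective coresolutions of the $P_i$), and your choice of side is the one to which the paper's remark citing \cite{ar2} applies verbatim. One simplification worth noting: the detour through $\CC^{\op}$ is avoidable, since applying Lemma~\ref{3.5} exactly as stated to $C=\tau^2T_i$ gives a triangle $T_1\to T_0\to \tau^2T_i\to T_1[1]$ with $T_0,T_1\in\add T$, and applying $\Hom_{\CC}(T,\;)$ --- the outer terms vanish because $\Hom_{\CC}(T,\tau^2T_i[-1])\simeq\Ext^1_{\CC}(T,T_i)=0$ and $\Ext^1_{\CC}(T,T_1)=0$ --- yields $0\to \Hom_{\CC}(T,T_1)\to \Hom_{\CC}(T,T_0)\to I_i\to 0$; thus every injective has projective dimension at most one, which is the mirror of your statement and equally sufficient (apply the remark from \cite{ar2} to $\Gamma^{\op}$). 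Your left-approximation lemma is nonetheless correct as you obtain it; besides passing to $\CC^{\op}$, it can also be proved directly by completing a left $\add T$-approximation $C\to T^0$ to a triangle, applying $\Hom_{\CC}(\;,T)$ to see that the cone $Y$ satisfies $\Ext^1_{\CC}(Y,T)=0$, and then using the $2$-CY symmetry together with the cluster-tilting property to conclude $Y\in\add T$. Finally, one trivial slip: $\End_{\CC^{\op}}(T)=\End_{\CC}(T)^{\op}=\Gamma$, not $\Gamma^{\op}$; this is immaterial to your argument.
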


We shall give another homological property of cluster-tilted algebras.
Recall that for a Gorenstein algebra $\Gamma$ of dimension at most one the category
$\Sub\Gamma$ (which is sometimes called the category $\CM\Gamma$ of Cohen-Macaulay modules) 
is functorially finite \cite{as} and extension closed since $\Gamma$ is a cotilting
$\Gamma$-module. Hence $\Sub \Gamma$ has almost split sequences \cite{as}. $\Sub \Gamma$
is also a Frobenius category, that is, $\Sub \Gamma$ has enough projectives and enough
injectives, and the projectives and injectives coincide. Hence the stable category
$\stSubG$ is a triangulated category \cite{ha1}. We have $[1]=\Omega^{-1}$, where $\Omega\colon \ul{\Sub}\Gamma\to \ul{\Sub}\Gamma$ is the equivalence induced by the first syzygy functor.  There is the following necessary
condition on cluster-tilted algebras.

\begin{theorem} \label{3.11}
With the above notation, for a cluster-tilted algebra $\Gamma$, the stable category
$\stSubG$ is $3$-CY, that is $D\Hom_{\stSubG}(A,B)\simeq\Hom_{\stSubG}(B,A[3])$ for $A$, $B$ in
$\stSubG$.
\end{theorem}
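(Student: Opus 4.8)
The plan is to reduce the Calabi--Yau statement to a computation of the Serre functor of the triangulated category $\stSubG$, and to perform that computation by transporting everything to the cluster category $\CH$ along $G=\Hom_\CH(T,-)$, where the $2$-Calabi--Yau duality $D\Ext^1_\CH(A,B)\simeq\Ext^1_\CH(B,A)$ (Theorem 2.3(c)) is available. Since $\Gamma$ is Gorenstein of dimension at most one (Theorem 3.9), $\Sub\Gamma=\underline{\CM}\Gamma$ is a Frobenius category whose projective--injective objects are $\add\Gamma$, so $\stSubG$ is triangulated with suspension $[1]=\Omega^{-1}$. Because $\Sub\Gamma$ has almost split sequences, $\stSubG$ admits a Serre functor, and the asserted $3$-CY property is exactly the statement that this Serre functor is $[3]$; equivalently, via $S=\tau[1]$, that the Auslander--Reiten translate of $\underline{\CM}\Gamma$ is $\Omega^{-2}=[2]$. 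This is what I would verify.

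First I would locate $\Sub\Gamma$ inside $\CH$. Setting $\cx=\{C\in\CH:\Ext^1_\CH(T,C)=0\}$, the claim is that $G$ induces an equivalence $\cx/[\add(T\oplus T[1])]\iso\Sub\Gamma$, where the brackets denote the ideal of maps factoring through the indicated objects. The key input is the isomorphism $\Ext^1_\Gamma(G(C),\Gamma)\simeq D\Ext^1_\CH(T,C)$, obtained by applying $\Hom_\CH(-,T)$ to the approximation triangle $T_1\to T_0\to C\to T_1[1]$ of Lemma 3.5 and invoking $2$-CY; since $\Gamma$ is $1$-Gorenstein, $G(C)\in\Sub\Gamma$ iff this group vanishes, i.e. iff $C\in\cx$. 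Combining the equivalence $\overline G\colon\CH/[\add T[1]]\iso\mod\Gamma$ of Theorem 3.6 with the identification of the projective--injectives as $G(\add T)$ then yields, for $C,D\in\cx$, a description of stable morphisms $\underline\Hom_\Gamma(G(C),G(D))\simeq\Hom_\CH(C,D)$ modulo maps factoring through $\add(T\oplus T[1])$. Completing a left $\add T$-approximation $C\to T^0$ to a triangle $C\to T^0\to Z\to C[1]$ shows (using $2$-CY and rigidity of $T$) that $Z\in\cx$ and $G(Z)\simeq\Omega^{-1}G(C)$, so that on objects the suspension of $\stSubG$ is carried to the shift $[1]$ of $\CH$.

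With these translations in hand, I would feed the $2$-CY duality of $\CH$, together with the approximation triangles over $\add T$, into the formula for $\underline\Hom$ to produce the degree-$3$ duality on $\stSubG$. The point is that the ``$3$'' should appear as ``$2+1$'': two degrees from the Calabi--Yau duality of $\CH$, and one extra degree from the syzygy shift, that is, from the discrepancy between $\Hom_\CH$ and stable $\Hom$ created by the approximation triangles (equivalently, from the fact that $\cx$ is closed under $[1]$ only up to $\add T$). Tracking this shift through the pairing converts the $\CH$-duality $D\Hom_\CH(C,D)\simeq\Hom_\CH(D,C[2])$ into $D\underline\Hom_\Gamma(G(C),G(D))\simeq\Hom_{\stSubG}(G(D),G(C)[3])$, which is the claim.

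The hard part is precisely this degree bookkeeping: one must compute the relevant Tate/stable groups through the \emph{entire} syzygy rather than through a two-term presentation. A naive cokernel computation collapses $\Ext^1_\Gamma(G(C),G(D))$ to $\Ext^1_\CH(C,D)$ and would spuriously produce a $2$-CY instead of a $3$-CY structure; the genuine content is that carrying the syzygies all the way through contributes the missing shift. The clean way to make this rigorous, following Keller--Reiten, is to pass to a dg enhancement: the cluster-tilting object $T$ in the $2$-CY category $\CH$ gives rise to a $3$-Calabi--Yau (Ginzburg-type) dg algebra $B$ with $H^0(B)\simeq\Gamma$, and the absolute $3$-CY structure on its derived category descends to $\stSubG\simeq\underline{\CM}\Gamma$. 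Controlling the interaction between the $2$-CY Serre duality on $\CH$ and the Gorenstein syzygy shift is the delicate step and the main obstacle, and it is exactly what the relative-Calabi--Yau dg machinery is designed to organize.
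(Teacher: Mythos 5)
Your general frame is the right one --- $\Gamma$ is Gorenstein of dimension at most one, so $\Sub\Gamma$ is Frobenius, $\stSubG$ is triangulated with $[1]=\Omega^{-1}$, and the $3$-CY property amounts to the Serre functor being $[3]$, equivalently $\tau\simeq\Omega^{-2}$; this matches the paper's setup (and Proposition 5.1). But the bridge you build between $\stSubG$ and $\CH$ collapses. You set $\cx=\{C\in\CH:\Ext^1_\CH(T,C)=0\}$; by the very definition of a cluster-tilting object (Section 2.3), $\Ext^1_\CH(T,C)=0$ forces $C\in\add T$, so $\cx=\add T$ and your proposed category $\cx/[\add(T\oplus T[1])]$ is the zero category. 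It therefore cannot be equivalent to $\Sub\Gamma$ (which contains $\add\Gamma$), nor can it carry $\stSubG$, which is nonzero in general: for the cluster-tilted algebra of type $A_3$ whose quiver is a $3$-cycle, $\Gamma$ is a selfinjective Nakayama algebra and $\stSubG=\ul{\mod}\Gamma$ has three indecomposable objects. The same example kills your ``key input'' $\Ext^1_\Gamma(G(C),\Gamma)\simeq D\Ext^1_\CH(T,C)$: for any indecomposable $C$ not in $\add(T\oplus\tau T)$ the left-hand side vanishes because this $\Gamma$ is selfinjective, while the right-hand side is nonzero because $C\notin\add T$. The source of the error is that applying $\Hom_\CH(T,-)$ or $\Hom_\CH(-,T)$ to the approximation triangle of Lemma 3.5 only produces a presentation, and by Theorem 3.6 morphisms of $\Gamma$-modules are morphisms of $\CH$ \emph{modulo the ideal} $[\add\tau T]$; your formula forgets this ideal, so $\Ext^1_\Gamma(G(C),\Gamma)$ is in reality only a proper subquotient of $D\Ext^1_\CH(T,C)$. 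That discrepancy is not a technicality: the whole content of the theorem is that $\CM\Gamma=\Sub\Gamma$ is the image of essentially all of $\CH$, not of $\add T$, and the extra shift producing ``$3=2+1$'' lives exactly in the ideal-theoretic bookkeeping you skipped.

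Your fallback in the last paragraph does not repair this. The existence of a $3$-CY (Ginzburg-type) dg algebra $B$ with $H^0(B)\simeq\Gamma$ whose Calabi--Yau structure descends to $\uCM\Gamma$ is a form of Amiot's theorem, combined with the description of cluster-tilted algebras by quivers with potentials; both are later than, and logically downstream of, the Keller--Reiten result you are asked to prove, so invoking them here is circular, and in any case you construct nothing. Note also that the paper itself gives no proof of Theorem 3.11 --- it is quoted from [KR1] --- so the honest benchmark is Keller--Reiten's argument, whose substance is precisely the part missing here: one must pass between $\CH$ and $\mod\Gamma$ keeping track of the ideals of maps factoring through $\add T$ and $\add\tau T$, identify which objects of $\CH$ land in $\Sub\Gamma$, express $\Omega^{\pm 1}$ through $\add T$-approximation triangles, and only then feed in the $2$-CY duality of $\CH$. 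As it stands, your proposal identifies the correct target ($\tau\simeq\Omega^{-2}$) but has no valid path from the $2$-CY property of $\CH$ to it.
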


We note that there are algebras satisfying both the above homological conditions, but
which are not cluster-tilted.

\bigskip
\noindent {\bf Example:} Let $Q$ be the quiver
$$
\xymatrix{
1 \ar[r]^\alpha & 2 \ar[d]^\beta    \\
4 \ar[u]^\delta & 3 \ar[l]^\gamma }
$$
and let $\Lambda$ be the path algebra $kQ$ modulo the relations given by all paths of
length $7$. This is a Nakayama algebra, which is selfinjective and hence Gorenstein. The
indecomposable projectives have length $7$, and we have $\stSubL = \stmodL$.

As we shall see in \secV, in order to show that $\ul{\mod}\la$ is 3-CY, it is enough to
show that $\tau\simeq\Omega^{-2}$. Since $\Lambda$ is of finite representation type, it is enough to show that $\tau X\simeq \Omega^2(X)$ for each indecomposable $X$ in $\ul{\mod}\Lambda$ \cite{am1}\cite{hj}. We have $\tau(S_1) \simeq S_2$, and $\Omega^{-1}(S_1) = P_3 /
S_1$ and $\Omega^{-1}(P_3 / S_1) = S_2$, so $\tau(S_1) \simeq \Omega^{-2}(S_1)$.
Calculating further, we then see that $\modL$ is $3$-CY. But $\Lambda$ is not
cluster-tilted since the relations in the unique cluster-tilted algebra with this quiver
are paths of length $3$.

\bigskip
\noindent {\bf Notes.} Most of the material in this section is taken from \cite{bmrrt}, \cite{bmr1},
\cite{bmr2}, \cite{bmr3}. Proposition~\ref{3.4} is taken from Assem-Br\"ustle-Schiffler
and Reiten-Todorov (see \cite{bmr3}) and \cite{kr1}, Theorem~\ref{3.8} is taken
from~\cite{abs} (see also \cite{br1}), Theorems ~\ref{3.9} and ~\ref{3.11} from~\cite{kr1},
while for selfinjective cluster tilted algebras the last one is in \cite{gk}. See also
\cite{kz}, \cite{z}, \cite{bkl}, \cite{bv}.

\bigskip
\section{Interplay and applications}
In this section we give some illustration of how the theory of cluster categories and
cluster-tilted algebras has had some feedback on the theory of cluster algebras in the acyclic case, and we
also give examples of nice interplay.

\subsection{Finite mutation classes}
Recall from \secI that there is a finite number of cluster variables, equivalently a
finite number of clusters, equivalently a finite number of seeds, if and only if one of
the seeds contains a Dynkin quiver. However, there may be a finite number of quivers
occurring  even if none of the quivers is Dynkin. Actually we have the following answer 
to a question of Seven.
\begin{theorem}
If the cluster quiver $Q$ has no oriented cycles, then there is only a finite number of
quivers in the mutation class of $Q$ if and only if $Q$ is Dynkin or extended Dynkin or
has two vertices.
\end{theorem}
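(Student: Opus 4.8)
The plan is to translate the statement into the language of cluster-tilted algebras via Theorem~\ref{3.2}, and then to reduce the finiteness question to a uniform bound on the number of arrows. By Theorem~\ref{3.2} the quivers occurring in the mutation class of $Q$ are exactly the quivers $Q_T$ of the cluster-tilted algebras $\End_\CH(T)^\op$, where $T = T_1 \oplus \dots \oplus T_n$ runs over the basic cluster-tilting objects of $\CH$, with $H = kQ$. All these quivers have the same number $n$ of vertices, and the number of arrows from $i$ to $j$ equals the dimension of the space of irreducible maps $T_i \to T_j$ in $\add T$, which is bounded above by $\dim_k \Hom_\CH(T_i, T_j)$. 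Hence the mutation class is finite if and only if these irreducible-map dimensions stay bounded as $T$ ranges over all cluster-tilting objects and $T_i, T_j$ over their indecomposable summands. In this way the whole theorem becomes a boundedness statement about $\Hom$- and $\Ext$-spaces between $\Ext$-orthogonal indecomposable rigid objects of $\CH$.

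First I would dispose of the two easy families. If $Q$ has two vertices, then no quiver in the mutation class admits an oriented path of length two, so mutation at either vertex merely reverses all arrows and preserves their number; the mutation class therefore contains at most two quivers and is finite, no matter how many arrows $Q$ has. If $Q$ is Dynkin, then $H = kQ$ has finite representation type, so $\CH$ has only finitely many indecomposable objects and hence only finitely many cluster-tilting objects, and the mutation class is trivially finite. Both cases are immediate once the translation of the first paragraph is in place.

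For $Q$ extended Dynkin with more than two vertices I would invoke the known structure of the tame cluster category: a single transjective component together with a family of tubes. The indecomposable rigid objects, and the morphisms between $\Ext$-orthogonal ones, are then controlled, since the dimension vectors occurring in the tubes are bounded and two summands of a common cluster-tilting object are constrained by mutual $\Ext$-orthogonality; so the $\Hom$-dimensions above are uniformly bounded and the mutation class is finite. An alternative, more combinatorial route is an induction on $n$ using Proposition~\ref{3.3}: deleting any vertex from an extended Dynkin diagram yields a Dynkin diagram, and $\Gamma/\Gamma e\Gamma$ is again cluster-tilted with quiver the corresponding full subquiver of $Q_T$; for $n \geq 4$ every pair of vertices survives after deleting a suitable third vertex, so the Dynkin case bounds each arrow multiplicity, while the base cases $n = 3$ are checked directly against the Markov description of rank-three cluster-tilted quivers. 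I would present the structural argument as the main line and keep the inductive one as a cross-check.

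The hard part will be the wild case: showing that if $Q$ is connected, acyclic, has more than two vertices, and is neither Dynkin nor extended Dynkin, then the number of arrows is genuinely unbounded. Here the Tits form of $H = kQ$ is indefinite, and the representation theory of a wild hereditary algebra is rich enough to produce exceptional (hence rigid) pairs $X, Y$ that are $\Ext$-orthogonal in both directions, so that $X \oplus Y$ is rigid, yet with $\dim_k \Hom_H(X, Y)$ arbitrarily large; completing $X \oplus Y$ to a tilting $H$-module by Bongartz gives a cluster-tilting object of $\CH$ whose quiver has many arrows between the vertices of $X$ and $Y$, and by Theorem~\ref{3.8} passing from the tilted algebra to the cluster-tilted algebra only adds arrows. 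Producing such pairs, and guaranteeing that the large $\Hom$-space really contributes to irreducible maps rather than only to compositions, is the crux and the step I expect to require the most work; it is where the theory of exceptional sequences over wild hereditary algebras must be brought in.
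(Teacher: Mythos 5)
Your overall strategy is the one the paper itself indicates: reduce via Theorem~\ref{3.2} to the quivers of cluster-tilted algebras, then use tilting theory for tame and wild hereditary algebras; your two-vertex and Dynkin cases are correct, and the observation that finiteness of the mutation class amounts to a uniform bound on arrow multiplicities (since the number of vertices is fixed) is the right reduction. The problem is that in both of the remaining cases the step carrying the actual content is asserted rather than proved. In the tame case the difficulty is not the tubes (rigid indecomposables there have quasi-length bounded by the tube rank) but the transjective rigid objects, whose dimension vectors are unbounded; your claim that $\Ext$-orthogonality "controls" their mutual $\Hom$-spaces \emph{is} the theorem in this case. The tool that makes it work, and which your sketch never reaches, is positive semidefiniteness of the Tits form $q$ of a tame quiver: choosing a derived equivalent hereditary algebra $H'$ so that the cluster-tilting object is an honest tilting module, and letting $d_X,d_Y$ be the dimension vectors of two nonisomorphic indecomposable summands $X,Y$ (which satisfy $\Ext^1_{H'}(X,Y)=0=\Ext^1_{H'}(Y,X)$ by Theorem 2.4(d)), one gets $0\le q(d_X-d_Y)=2-\dim\Hom_{H'}(X,Y)-\dim\Hom_{H'}(Y,X)$, so any two vertices are joined by at most two arrows. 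In the wild case --- which you yourself flag as open --- it is exactly the failure of semidefiniteness together with Kerner-type results on wild hereditary algebras that produces $\Ext$-orthogonal rigid pairs with unbounded $\Hom$ contributing genuinely to arrows (this is the content of \cite{br2}); as it stands, your proposal proves the easy implications and outlines, but does not prove, the two that matter.

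Separately, your inductive ``cross-check'' for the tame case is not merely incomplete but incorrect. Proposition~\ref{3.3} tells you that $\Gamma/\Gamma e\Gamma$ is cluster-tilted and that its quiver is the full subquiver of $Q_T$, but it does \emph{not} tell you of which type: vertex deletion does not commute with mutation, so deleting a vertex of a quiver in the mutation class of an extended Dynkin quiver need not yield a quiver mutation equivalent to a Dynkin one (the type of the quotient is governed by a perpendicular category, which for a regular rigid summand is again of tame type). The paper's own Example 2 in Section 4.1 already refutes the bound you want to extract: the mutation class of the acyclic quiver with underlying graph $\tilde{A}_2$ contains a quiver with a double arrow, whose two-vertex full subquiver is the Kronecker quiver $\tilde{A}_1$, not a quiver of Dynkin type; the same phenomenon persists in higher rank (deleting the vertex of a regular summand in type $\tilde{D}_4$ can land in type $\tilde{A}_3$). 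So arrow multiplicities in the tame case cannot be bounded by the Dynkin value $1$; any repaired induction must allow the quotient to be tame and establish the bound $2$ there, which sends you back to the semidefiniteness argument above.
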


An essential point to use is that the quivers occurring  are exactly the quivers of
cluster-tilted algebras by Theorem 3.2. To investigate this we use tilting theory for
tame and wild hereditary algebras.

\bigskip
\noindent {\bf Examples:} 1) $\xymatrix@C0.5cm{
  \bullet\ar@<0.5ex>[r]\ar@<-0.5ex>[r]& \bullet
}$ is the only quiver in its mutation class.

\noindent 2) The mutation class of $\xymatrix@R0.5cm@C0.4cm{
  \bullet \ar[r]\ar@/_/[rr]&
  \bullet \ar[r]& \bullet
}$ has in addition only $\xymatrix@R0.5cm@C0.4cm{
  \bullet\ar@/_/[rr]\ar@<-0.4ex>@/_/[rr]&
  \bullet \ar[l]& \bullet\ar[l]
}$

\bigskip
\subsection{Lists of Happel-Vossieck and Seven}
There is a well known \textit{Happel-Vossieck list} in representation theory, which consists
of quivers with relations for the minimal tilted algebras of infinite representation
type, that is, the tilted algebras $\la$ which are of infinite type, but where $\la/\la
e\la$ is of finite type for any vertex $e$ in the quiver \cite{hv}. On the other hand
there is the list of Seven of minimal infinite cluster quivers, namely the quivers not
mutation equivalent to a Dynkin quiver, but if one vertex is removed, the quiver is
mutation equivalent to a Dynkin quiver. As pointed out in \cite{s1}, there is a close
connection between these lists, as the list of Seven is obtained from the Happel-Vossieck
list by inserting arrows in the opposite direction whenever there is a dotted arrow
indicating a minimal relation.

This is explained using that the list of Seven gives the quivers of the minimal
cluster-tilted algebras of infinite type, using Theorem 3.2. Then we also use how to
obtain the quiver of a cluster-tilted algebra from the quiver with relations for the
corresponding tilted algebra, as we have discussed in Theorem 3.2. For example one passes
from
$$\xymatrix@R0.2cm@C0.4cm{
  \bullet\ar[r]&  \bullet\ar[dd]&  \bullet\ar[l]\ar[dr]\ar@{--}[dd]& &&
  \bullet\ar[r]&\bullet
\ar[dd]&  \bullet\ar[l]\ar[rd]&\\
  &&& \bullet\ar[dl] & \text{ to }&& && \bullet\ar[ld]\\
  \bullet&  \bullet\ar[l]\ar[r]&  \bullet&&&\bullet&  \bullet\ar[l]\ar[r]&  \bullet\ar[uu]&
}$$

\bigskip
\subsection{Denominators}
We have seen that for acyclic cluster algebras there are many similarities between the
ingredients in the definition of a cluster algebra and the cluster-tilting theory in the
corresponding cluster category, for example we have a cluster graph and a cluster-tilting
graph, with seeds or tilting seeds at each vertex.

The next step is to try to define a map from cluster variables to indecomposable
rigid objects, which takes clusters to tilting objects and seeds to tilting seeds
(or in the other direction), which is 1-1 or surjective or both. When  $Q$ is a cluster
quiver without oriented cycles, we have the initial seed $(\ul{x},Q)$, where
$\ul{x}=\{x_1,\cdots,x_n\}$. The natural initial tilting seed is $(H,Q)$, where
$H=kQ=P_1\oplus \cdots\oplus P_n$, and the $P_i$ are the indecomposable projectives. We
can start with defining $\varphi(x_i)=P_i$. When we do the exchange of cluster variables,
we have a corresponding exchange of indecomposable rigid objects, and it is natural to send the
new cluster variable to the new indecomposable rigid object, as illustrated in the following
example, where $Q$ is $1\to 2\to 3$.
$$\xymatrix@C0.1cm{
&\{\{x_1,x_2,x_3\};Q\}\ar@{-}[dl]\ar@{-}[rd]&\\
\{\{{\frac{1+x_2}{x_1}},x_2,x_3\};Q_2\}&& \{\{x_1,{\frac{x_1+x_3}{x_2}},x_3\};Q_3\} }$$
$$\xymatrix@C0.1cm{
& \{P_1\oplus P_2\oplus P_3 ;Q\}\ar@{-}[dr]\ar@{-}[dl]&\\
\{P_1[1]\oplus P_2\oplus P_3;Q_2\} &&\{P_1\oplus S_2\oplus P_3 ;Q_3\} }$$ Here $Q_2$ and
$Q_3$ are as in Section 1.3. We define $\varphi(x_i)=P_i$, $\varphi(\frac{1+x_2}{x_1})=P_1[1]$,
$\varphi(\frac{x_1+x_3}{x_2})=S_3$ etc.

\bigskip
Note that by the theory we have already discussed, we have the same corresponding quiver
in the second diagram, which by definition is the quiver $Q_{T}$ for the cluster-tilting
object $T$.

If we follow the same fixed path from the initial seed in both pictures, the procedure for
defining $\varphi$ is unique. But if we reach the same cluster variable via a different
path, then we might get a different indecomposable object. If there is a map from cluster
variables to indecomposable rigid objects sending clusters to tilting objects and
seeds to tilting seeds, then it has to be given by $\varphi$, so the problem is to prove
that $\varphi$ is well defined.

As we have discussed in Section 1, for our example $Q:1\to 2\to 3$, the denominators of
the cluster variables, when written in reduced form, are given by the composition factors
of an indecomposable rigid module. For hereditary algebras indecomposable rigid modules are uniquely determined by their
composition factors (see \cite{ker1}). And for this example, the map $\varphi$ is well
defined and is a bijection.

Note that if we know that the denominator of any cluster variable is given by the
composition factors of some indecomposable rigid object, and that for any choice of
path from the initial seed the map $\varphi$ takes the cluster variable to this
corresponding rigid object, then the definition of $\varphi$ would not depend on
the choice of paths. Actually, these two statements can be proved simultaneously.
\begin{theorem} Let $Q$ be a cluster quiver without oriented cycles.
\begin{enumerate}
\item[(a)] For each cluster variable $f/g$ different from $x_1,\cdots,x_n$, in reduced form,
there is a unique indecomposable rigid $H$-module whose composition factors are
given by $g$.

\item[(b)] The map $\varphi$ from cluster variables to indecomposable rigid objects
discussed above is well defined and surjective and takes clusters to cluster-tilting objects and
seeds to tilting seeds.
\end{enumerate}
\end{theorem}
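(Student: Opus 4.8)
The plan is to prove (a) and (b) simultaneously, by induction on the distance of a seed from the initial seed $(\ul{x}, Q)$ in the exchange graph, exploiting that each seed mutation is mirrored by a tilting-seed mutation: passing at vertex $k$ from a tilting seed $(T, Q_T)$ to $(T^*, Q_{T^*})$ replaces the summand $T_k$ by its unique second complement $T_k^*$, and $Q_{T^*} = \mu_k(Q_T)$ by Theorem~\ref{3.1}. To a cluster variable $z$, in reduced form $z = f/g$ with $g = x_1^{d_1}\cdots x_n^{d_n}$ (a monomial, by the Laurent phenomenon of Section~1.4), I attach the denominator vector $\delta(z) = (d_1, \dots, d_n) \in \Z^n$. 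The invariant I would carry along every path out of $(\ul{x}, Q)$ is: the object $\varphi(z)$ attached to $z$ is indecomposable rigid in $\CC$; the initial variables $x_1, \dots, x_n$ are exactly those $z$ with $\varphi(z)$ a shifted projective $P_i[1]$ (the normalization making the equality below hold, with $\delta(z) = 0$); and for every other $z$ the object $\varphi(z)$ is an $H$-module with $\underline{\dim}\,\varphi(z) = \delta(z)$. This invariant contains statement (a): for $z \neq x_1, \dots, x_n$ the denominator $g$ records the composition factors of the indecomposable rigid $H$-module $\varphi(z)$.

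For the inductive step I would compare the two exchanges term by term. On the categorical side the new complement $T_k^*$ sits in the exchange triangles $T_k^* \to B \to T_k \to$ and $T_k \to B' \to T_k^* \to$ of Section~2.4, with $B, B' \in \add(T/T_k)$; on the combinatorial side $x_k$ is replaced by $x_k^* = (m_1 + m_2)/x_k$, where the exponents of $m_1$ and $m_2$ are precisely the multiplicities of the summands of $T/T_k$ in $B$ and $B'$ (this is the compatibility of the exchange multiplication $x_k x_k^* = m_1 + m_2$ with the exchange triangles, $m_1 \leftrightarrow B$ and $m_2 \leftrightarrow B'$, recorded in Section~2.5). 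Re-expressing $m_1$ and $m_2$ in the initial variables through the denominators already computed for the summands of $T$, the relation $x_k x_k^* = m_1 + m_2$ forces $\delta(x_k^*)$ to obey the (tropical, read off from $Q_T$) Fomin-Zelevinsky recursion for denominator vectors, and the task is to show that this recursion reproduces the dimension vector of $T_k^*$, that is $\delta(x_k^*) = \underline{\dim}\,T_k^*$ when $T_k^*$ is a module. Carrying this out --- and in particular verifying that after the addition $m_1 + m_2$ the reduced denominator of $x_k^*$ is this monomial and not a proper divisor of it --- is the step I expect to be the main obstacle: one must control the cancellations imposed by reduced form so that no spurious common factor appears, and one must handle uniformly the boundary cases in which $T_k$ or $T_k^*$ is a shifted projective $P_i[1]$, where on the root side a negative simple root intervenes and the clean equality $\delta = \underline{\dim}$ degenerates.

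Granting the invariant, the remaining claims follow formally. Statement (a) is then immediate, and its uniqueness assertion is the fact that over a hereditary algebra an indecomposable rigid module is determined by its dimension vector \cite{ker1}. For well-definedness of $\varphi$ in (b): if a cluster variable $z$ is reached along two different paths, the two candidate values of $\varphi(z)$ are indecomposable rigid objects with the same $\delta(z)$, so by the uniqueness just quoted (together with the bookkeeping that distinguishes a shifted projective from a module) they coincide, and $\varphi$ is independent of the path. That $\varphi$ sends clusters to cluster-tilting objects and seeds to tilting seeds is built into the induction, each seed mutation being matched by a tilting-seed mutation whose quiver agrees by Theorem~\ref{3.1}. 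Finally, surjectivity: every indecomposable rigid object of $\CC$ can be completed to a maximal rigid, hence cluster-tilting, object, so it is a summand of some cluster-tilting object; since the cluster-tilting graph is connected (Section~2.3), every cluster-tilting object is reached from the initial one by finitely many mutations, and therefore every indecomposable rigid object lies in the image of $\varphi$.
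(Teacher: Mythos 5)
Your overall architecture is the same as the paper's: prove (a) and (b) simultaneously by induction along paths in the exchange graph, match each seed mutation with a cluster-tilting mutation (quivers agreeing by Theorem~\ref{3.1}), identify the exchange relation $x_kx_k^*=m_1+m_2$ with the exchange triangles via $m_1\leftrightarrow B$, $m_2\leftrightarrow B'$, get uniqueness in (a) from the fact that indecomposable rigid $H$-modules are determined by their composition factors \cite{ker1}, and get surjectivity from connectedness of the cluster-tilting graph. However, the step you yourself flag as ``the main obstacle'' --- showing that after forming $(m_1+m_2)/x_k$ and rewriting in the initial variables no spurious cancellation occurs, so that the reduced denominator really is the predicted monomial --- is precisely the crux, and you leave it unresolved. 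As stated, your induction cannot close: your invariant (indecomposable rigid, $\underline{\dim}\,\varphi(z)=\delta(z)$) says nothing about the \emph{numerators}, and it is exactly the numerators one must control to rule out a common factor $x_i$ between numerator and denominator of $x_k^*$.

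The paper fills this gap with an elementary but essential device: the positivity condition of Lemma 4.3. One says $f(x_1,\dots,x_n)$ satisfies the positivity condition if $f(e_i)>0$ for each $e_i=(1,\dots,1,0,1,\dots,1)$; the lemma observes that then $f/m$ is automatically in reduced form for \emph{any} monomial $m$ (since $x_i\mid f$ would force $f(e_i)=0$). The inductive invariant must therefore be strengthened to include that the numerator of every cluster variable encountered satisfies the positivity condition; this property propagates through the exchange relation (as in the paper's ``knitting'' example, where positivity of $f_{P_2}$ and $f_{S_3}$ yields positivity of the numerator $m_{P_2}+f_{P_2}$ of the new variable), and once it is carried along, the identification of $\delta(x_k^*)$ with the denominator of the reduced form is immediate and the comparison with $\underline{\dim}\,T_k^*$ goes through. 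So your proposal needs this one additional idea --- an invariant on numerators that makes reducedness automatic --- to become a proof; without it, the key step is asserted rather than established.
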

One of the problems dealing with cluster variables is to decide when a given expression
$f/g$ is in reduced form. There is a surprisingly elementary positivity condition to deal
with this problem.

We say that $f=f(x_1,\cdots,x_n)$ satisfies the \emph{positivity condition} if $f(e_i)>0$
for $e_i=(1,1\cdots,0,1,\cdots,1)$, where 0 is in the $i$-th position, for
$i=1,\cdots,n$. The following result is crucial.
\begin{lemma}
  If $f$ satisfies the positivity condition and $m$ is a monomial, then $f/m$  is in
reduced form.
\end{lemma}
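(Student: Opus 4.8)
The plan is to show that the positivity condition forces no variable $x_j$ to divide $f$; since a monomial $m$ has, up to nonzero scalars, only the variables $x_1,\dots,x_n$ as its irreducible factors, this immediately yields that $f$ and $m$ share no nontrivial common factor, which is precisely the assertion that $f/m$ is in reduced form. So the whole statement reduces to a single divisibility claim about $f$ alone.

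First I would argue by contradiction for that divisibility claim. Suppose some variable $x_j$ divides $f$, say $f = x_j\, g$ with $g \in \mathbb{Q}[x_1,\dots,x_n]$. I then evaluate both sides at the point $e_j = (1,\dots,1,0,1,\dots,1)$ whose $j$-th coordinate is $0$. Since $x_j$ evaluates to $0$ at $e_j$, we get $f(e_j) = 0 \cdot g(e_j) = 0$. But the positivity condition asserts $f(e_j) > 0$, a contradiction. Hence no variable $x_j$ divides $f$.

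With that in hand I would conclude as follows. Write $m = c\, x_1^{a_1}\cdots x_n^{a_n}$ with $c \in k$ and exponents $a_j \geq 0$. The nonunit irreducible factors of $m$ are exactly the variables $x_j$ with $a_j > 0$. Since none of these divide $f$, the greatest common divisor of $f$ and $m$ is a unit, i.e.\ $f$ and $m$ have no common nontrivial factor. By definition this says that $f/m$ is in reduced form, which is the desired conclusion.

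I expect essentially no genuine obstacle here, since the argument is a one-line evaluation. The only point that needs a little care is the logical structure of ``reduced form'': it only demands coprimality of $f$ with the monomial $m$, so it suffices to exclude the variables as common factors, and each variable $x_j$ is detected precisely by the test point $e_j$ built into the positivity condition. The mild bookkeeping is just the observation that evaluating at $e_j$ annihilates the factor $x_j$ (so that divisibility by $x_j$ would force $f(e_j)=0$), which is exactly what makes the family of inequalities $f(e_1)>0,\dots,f(e_n)>0$ enough to rule out every possible common factor with an arbitrary monomial.
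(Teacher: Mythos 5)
Your proof is correct and follows exactly the paper's argument: the paper likewise assumes a variable $x_i$ divides $f$, evaluates at $e_i$ to get $f(e_i)=0$, contradicting the positivity condition, and concludes that $f/m$ is reduced since the only possible common factors with a monomial are the variables. You merely spell out the final coprimality step, which the paper leaves implicit.
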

\begin{proof}
  Assume $f=f_1\cdot x_i$. Then $f(e_i)=f_1(e_i)\cdot0=0$, so that $f$ does not satisfy
the positivity condition. Hence $f/m$ is in reduced form.
\end{proof}
As a consequence of Theorem 4.2, we get the following, answering a conjecture from
Section 1 in the acyclic case.
\begin{theorem}
  For an acyclic cluster algebra, a seed is determined by its cluster.
\end{theorem}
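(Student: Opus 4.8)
The plan is to deduce this statement directly from Theorem~4.2, transporting the problem from the cluster algebra into the cluster category by means of the map $\varphi$. The key structural fact I would exploit is the one recalled in Section~2.5: a tilting seed $(T,Q_T)$ is, by definition, \emph{determined} by its cluster-tilting object $T$, because $Q_T$ is nothing but the quiver of $\End_{\CC}(T)^{\op}$. Thus, once a cluster is shown to determine a cluster-tilting object, the quiver of the seed will be determined automatically.

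Concretely, let $Q$ be an acyclic cluster quiver, $H=kQ$, and $\CC$ the associated cluster category. Given a cluster $\ul{x}'=\{y_1,\dots,y_n\}$, I would apply the map $\varphi$ of Theorem~4.2(b) to each $y_j$ and form $T=\varphi(y_1)\oplus\cdots\oplus\varphi(y_n)$. Since $\varphi$ is well defined as a function on cluster variables (that is, independent of the mutation path used to reach a given variable) and carries clusters to cluster-tilting objects, the object $T$ is a cluster-tilting object depending only on the \emph{set} $\ul{x}'$, not on any particular seed containing it. Now let $(\ul{x}',Q')$ be any seed whose cluster is $\ul{x}'$. Because $\varphi$ takes seeds to tilting seeds, it sends $(\ul{x}',Q')$ to the tilting seed $(T,Q_T)$, and in particular $Q'=Q_T$. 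As $Q_T$ is the quiver of $\End_{\CC}(T)^{\op}$, it is determined by $T$, hence by $\ul{x}'$. Therefore $Q'$ is determined by $\ul{x}'$, which is precisely the assertion that a seed is determined by its cluster.

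The entire force of the argument resides in Theorem~4.2, so I expect the main obstacle to lie not in this deduction but in having that result available: one needs that $\varphi$ is genuinely well defined, i.e.\ that two different mutation paths leading to the same cluster variable produce the same indecomposable rigid object. Granting this, the remaining work is essentially formal. The only point worth checking is that $\varphi$ restricted to a single cluster is injective, so that $T$ has exactly $n$ indecomposable summands and can genuinely be recovered from $\ul{x}'$; this is immediate, since a cluster has $n$ elements and a cluster-tilting object has $n$ nonisomorphic indecomposable summands, and $\varphi$ maps the former onto the latter. Finally, I would emphasize that the acyclicity hypothesis enters exactly here, because Theorem~4.2 is established only in the acyclic case.
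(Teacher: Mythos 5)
Your proof is correct and takes essentially the same route as the paper: the paper states this theorem precisely "as a consequence of Theorem 4.2," and your argument spells out exactly the intended deduction --- the cluster determines the cluster-tilting object $T$ via the well-defined map $\varphi$, the tilting seed $(T,Q_T)$ is by definition determined by $T$, and compatibility of $\varphi$ with seeds forces $Q'=Q_T$. You have simply made explicit the details the paper leaves implicit, including the correct identification of where acyclicity and the path-independence of $\varphi$ are used.
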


Note that this has been proved in a more general setting in \cite{gsv}.

\smallskip

We illustrate some of these ideas on our standard example.

\bigskip
\noindent {\bf Example:} Let $Q$ be the quiver $1\to 2\to 3$ and $H=kQ$ the corresponding
path algebra. We define a map $\psi$ from the set of isomorphism classes of
indecomposable $H$-modules to the cluster variables for the cluster algebra $C(Q)$, where
the composition factors of $X$ in $\ind H$ determine the denominator in $\psi(X)$. We
here use the AR-quiver, and it is natural to define a map in the opposite direction from
what we considered above. The map $\psi$ is given by the following pictures
$$
\xymatrix@R0.5cm@C0.2cm{
&&\makebox[5mm]{$I_1[-1]$}\ar[dr]&&P_1\ar[dr]&&&&&x_1\ar[dr]&&\makebox[5mm]{$x_{P_1}$}\ar[dr]&&\\
& \makebox[5mm]{$I_2[-1]$}\ar[ur]\ar[dr]&&P_2\ar[ur]\ar[dr]&&I_2\ar[dr]&&&
x_2\ar[ur]\ar[dr]&&\makebox[5mm]{$x_{P_2}$}\ar[ur]\ar[dr]&&\makebox[5mm]{$x_{I_2}$}\ar[dr]&\\
\makebox[5mm]{$I_3[-1]$}\ar[ur]&&P_3\ar[ur]&& S_2\ar[ur]&&S_1&
x_3\ar[ur]&&\makebox[5mm]{$\frac{1+x_2}{x_3}$}\ar[ur]&&\makebox[5mm]{$x_{S_2}$}\ar[ur]&&\makebox[5mm]{$x_{S_1}$} }
$$
Note that $I_j[-1]=P_j[1]$ in the cluster category $\CC$ and we send $I_j[-1]$ to $x_j$.
Then the cluster-tilting object $T=I_1[-1]\oplus I_2[-1] \oplus I_3[-1]$ is sent to the
cluster $\ul{x}=\{x_1, x_2,x_3\}$. Exchanging $I_2[-1]$ in $T$ gives $S_3=P_3$, and mutating
$\ul{x}$ at vertex 3 amounts to replacing $x_3$ by $x_3^{'}=\frac{1+x_2}{x_3}$. So we
define $\psi(P_3)=\frac{1+x_2}{x_3}$. Denote $\psi(M)=x_M=f_M/m_M$ in reduced form,
where $m_M$ is a monomial. We then get $x_{P_2}=(x_1+\frac{1+x_2}{x_3})\cdot
\frac{1}{x_2}=\frac{1+x_2+x_1x_3}{x_1x_3}$ and
$x_{P_1}=(1+x_{P_2})1/x_1=\frac{1+x_2+x_2x_3+x_1x_3}{x_1x_2x_3}$. We see that the
denominators for the $x_{P_i}$ correspond to the compostion factors of the $P_i$. Further
we have $x_{S_3}=\frac{1+x_{P_2}}{x_{S_3}}=(1+{f_{P_2}/m_{P_2}})\cdot
m_{S_3}/f_{S_3}=\frac{m_{P_2}+f_{P_2}}{f_{S_3}}\cdot\frac{1}{m_{P_2}/m_{S_3}}$. The
monomial in the denominator is $m_{P_2}/m_{S_3}=m_{S_2}$. We do however need that such an
expression is in reduced form, and here the positivity condition is important. We have
that $f_{P_2}$ and $f_{S_3}$ satisfy the positivity condition, and it follows from this
that $\frac{m_{P_2}+f_{P_2}}{f_{S_3}}$ also satisfies the same condition, hence this is
fine. Continuing this ``knitting'' procedure we get a map $\psi$ as indicated on the
picture, where the denominators of the cluster variables correspond to the composition
factors of the $H$-module they come from.

We can use the same procedure for any $H$ of finite type, and we get in this elementary
way a one-one map from indecomposable objects in $\CC$ to cluster variables. To show that
it is a bijection  one can for example use the fact mentioned in \secI that the number of
indecomposable objects in $\CC$ is the same as the number of cluster variables.

In a similar way we define for any $H=kQ$ a one-one map $\psi$ from the indecomposable
preprojective modules to cluster variables. This gives an alternative proof of the fact
that if $Q$ is a connected quiver which is not Dynkin, then there is an infinite number
of cluster variables. Note however that the important Laurent phenomenon is used in all
considerations.

\bigskip
\subsection{Bijection and further applications}
In this section we mention some further improvements, as a consequence of using some more
advanced techniques, in particular a beautiful formula of Caldero-Chapoton involving
Euler characteristics, generalized by Caldero-Keller. This allows one to get a natural
map $\psi$ from indecomposable rigid objects to cluster variables. As a consequence
we have the following.
\begin{theorem}
  The map $\psi$ gives a bijection from indecomposable rigid objects to
cluster variables, taking cluster-tilting  objects to clusters and tilting seeds to
seeds.
\end{theorem}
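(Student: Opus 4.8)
The plan is to realize $\psi$ as the Caldero--Chapoton map $X_{?}$, which assigns to each object $M$ of $\CC$ an explicit Laurent expression in $x_1,\dots,x_n$ built from the Euler characteristics of the quiver Grassmannians of submodules of $M$ (extended to the shifted projectives in the natural way), and generalized by Caldero--Keller to carry the exchange structure. The first step is to normalize on the initial data: one checks directly from the defining formula that $\psi(P_i[1]) = x_i$, exactly as in the knitting example of Section~4.3, so that the initial cluster-tilting object $\bigoplus_i P_i[1]$ is sent to the initial cluster $\ul{x}=\{x_1,\dots,x_n\}$, and the initial tilting seed $(\bigoplus_i P_i[1], Q)$ to the initial seed $(\ul{x},Q)$.

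The decisive ingredient is the Caldero--Keller multiplication formula. For an exchange pair $(M,M^*)$ with exchange triangles $M^*\to B\to M\to$ and $M\to B'\to M^*\to$ as in Section~2.4, the formula asserts
\[
\psi(M)\,\psi(M^*) = \psi(B) + \psi(B'),
\]
where $\psi$ is extended to rigid objects by $\psi(X\oplus Y)=\psi(X)\psi(Y)$. Comparing this with the exchange relation $x_ix_i^*=m_1+m_2$ of Section~1.2, one sees that $\psi$ intertwines the (unique) exchange of cluster-tilting objects in $\CC$ with the mutation of seeds in $C(Q)$, once one knows that the two middle terms $\psi(B)$ and $\psi(B')$ reproduce the monomials $m_1$ and $m_2$. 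That last matching is where Theorem~\ref{3.1} enters: the multiplicities of the summands of $\ol T$ occurring in $B$ and $B'$ are precisely the numbers of arrows in the quiver $Q_T=\mu_k(Q_{T'})$ that govern $m_1$ and $m_2$, so the two exchange procedures agree termwise.

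With the formula in hand I would argue by induction along the cluster-tilting graph, which is connected by the theorem of Section~2.3. The base case is the initial tilting seed, already treated. For the inductive step, if a cluster-tilting object $T$ maps to a cluster with quiver $Q_T$, then any single exchange $T=\ol T\oplus M\rightsquigarrow \ol T\oplus M^*$ corresponds, by the multiplication formula, to a mutation of the associated seed; hence $\ol T\oplus M^*$ also maps to a cluster and the new tilting seed to the mutated seed. Since every indecomposable rigid object is a summand of some cluster-tilting object, this shows at once that $\psi$ sends indecomposable rigid objects to cluster variables, cluster-tilting objects to clusters, and tilting seeds to seeds, and that it is surjective onto the set of cluster variables.

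It remains to prove injectivity, and here I would invoke Theorem~4.2(a): a non-initial cluster variable written in reduced form $f/g$ has for its denominator a monomial recording the composition factors of a \emph{unique} indecomposable rigid $H$-module, and distinct indecomposable rigid modules have distinct composition factors. Combined with separate bookkeeping for the objects $P_i[1]$, whose images are the initial variables $x_i$, this forces $\psi$ to be injective, completing the bijection. The genuine obstacle in this plan is the multiplication formula itself: establishing $\psi(M)\psi(M^*)=\psi(B)+\psi(B')$ demands a careful analysis of how quiver Grassmannians and their Euler characteristics decompose along the exchange triangles, and this homological-geometric computation is the technical heart of the Caldero--Chapoton and Caldero--Keller work that everything else rests upon.
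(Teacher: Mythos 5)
Your proposal follows precisely the route the paper points to: Theorem 4.5 is stated there without proof, as a consequence of the Caldero--Chapoton Euler-characteristic formula and its Caldero--Keller generalization, and your sketch (normalization $\psi(P_i[1])=x_i$, the multiplication formula $\psi(M)\psi(M^*)=\psi(B)+\psi(B')$ along the exchange triangles, induction over the connected cluster-tilting graph, and injectivity via the denominator theorem together with the fact that indecomposable rigid modules are determined by their composition factors) is exactly the argument carried by the references the paper cites. You also correctly identify the multiplication formula itself as the genuine technical core that the paper delegates to Caldero--Chapoton and Caldero--Keller.
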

There are also further results on cluster variables, answering  conjectures from Section
1 in the acyclic case. 
\begin{theorem}
  For any cluster variable $f/m$ in reduced form  for an acyclic cluster algebra,
all coefficients of $f$ are positive.
\end{theorem}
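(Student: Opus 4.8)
The plan is to realize each cluster variable through the Caldero--Chapoton/Caldero--Keller expression for the bijection $\psi$ of the preceding theorem, and thereby convert the combinatorial positivity of the numerator coefficients into a geometric statement about Euler characteristics of quiver Grassmannians.

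First I would reduce to indecomposable rigid modules. The initial variables $x_1,\dots,x_n$ have numerator equal to a single variable with coefficient $1$, so positivity is trivial for them; under $\psi$ these correspond exactly to the shifted projectives $P_i[1]$. By the bijection of the preceding theorem, every remaining cluster variable equals $\psi(M)$ for a unique indecomposable rigid object of $\CH$ that is not a shifted projective, hence is (isomorphic to) an indecomposable rigid $H$-module $M$, i.e.\ $\Ext^1_H(M,M)=0$. The Laurent phenomenon of Section 1.4(b) guarantees that $\psi(M)$ is already a Laurent polynomial with monomial denominator, so passing to reduced form $f/m$ only removes part of that monomial; in particular the coefficients of $f$ are precisely the nonzero coefficients occurring in the Laurent expansion of $\psi(M)$.

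Next I would unwind the definition of $\psi(M)$. The Caldero--Chapoton formula writes
\[
\psi(M)=\sum_{\mathbf e}\chi\!\left(\operatorname{Gr}_{\mathbf e}(M)\right)\prod_{i=1}^{n}x_i^{c_i(\mathbf e)},
\]
where $\operatorname{Gr}_{\mathbf e}(M)$ is the projective variety of submodules of $M$ of dimension vector $\mathbf e$, $\chi$ is the topological Euler characteristic, and the exponents $c_i(\mathbf e)$ are determined by the Euler form of $H$. Thus each coefficient appearing in the numerator $f$ is (a sum of) Euler characteristics $\chi(\operatorname{Gr}_{\mathbf e}(M))$, and the theorem reduces to the assertion that $\chi(\operatorname{Gr}_{\mathbf e}(M))>0$ whenever $\operatorname{Gr}_{\mathbf e}(M)\neq\varnothing$.

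The heart of the argument, and the step I expect to be the main obstacle, is this positivity of Euler characteristics. Here I would use rigidity decisively: since $\Ext^1_H(M,M)=0$, the quiver Grassmannians $\operatorname{Gr}_{\mathbf e}(M)$ are smooth projective varieties, and the goal is to show that their cohomology is concentrated in even degrees. The cleanest route is to choose a suitable one-parameter subgroup of a torus acting on $M$ whose fixed points on $\operatorname{Gr}_{\mathbf e}(M)$ are isolated, and to run a Bia\l ynicki--Birula argument to obtain an affine paving; rigidity is exactly what controls smoothness and the dimension and makes the fixed-point analysis manageable. Given such a paving, $\chi(\operatorname{Gr}_{\mathbf e}(M))$ equals the number of cells, a strictly positive integer whenever the variety is nonempty. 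Substituting back into the Caldero--Chapoton expansion shows that every coefficient of $f$ is a positive integer, which is the claim.
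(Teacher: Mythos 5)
You have correctly identified the approach the paper itself points to: Theorem 4.6 is stated there without proof, as a consequence of the Caldero--Chapoton formula as developed by Caldero--Keller and Caldero--Reineke, and your reduction follows that route faithfully. In particular, reducing the theorem to the inequality $\chi(\operatorname{Gr}_{\mathbf e}(M))\geq 0$ for indecomposable rigid $H$-modules $M$ is sound: the initial variables are trivial, every other cluster variable is $\psi(M)$ for such an $M$ by Theorem 4.5, the monomial denominators mean the numerator coefficients are the (sums of) Euler characteristics $\chi(\operatorname{Gr}_{\mathbf e}(M))$ --- your parenthetical ``sums of'' is genuinely needed, since the exponent vector depends on $\mathbf e$ through a skew-symmetrized Euler form which can be degenerate --- and smoothness of $\operatorname{Gr}_{\mathbf e}(M)$ does follow from rigidity, because for hereditary $H$ the composite $\Ext^1_H(M,M)\to\Ext^1_H(N,M)\to\Ext^1_H(N,M/N)$ is surjective, so rigidity kills the obstruction space at every point.

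The gap is exactly at the step you call the main obstacle. Rigidity gives smoothness, but it gives no torus action: a $k^*$-action on $\operatorname{Gr}_{\mathbf e}(M)$ induced from one on $M$ amounts to a grading of $M$ as a representation, and its fixed locus consists of the graded submodules, which is in general a product of quiver Grassmannians of the graded pieces --- positive-dimensional, not a finite set of points. Torus actions with isolated fixed points, and the resulting Bia\l ynicki-Birula affine pavings, are known only for special classes of modules (string and tree modules, Dynkin and some tame situations); there is no construction of such an action for an arbitrary rigid module over a wild acyclic quiver, and the existence of affine pavings of these Grassmannians in general was a difficult problem not settled by any direct torus argument. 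This is precisely where the published proofs have to work hard: one shows instead that the smooth projective varieties $\operatorname{Gr}_{\mathbf e}(M)$ are polynomial-count over finite fields, so that by the Weil conjectures their cohomology is concentrated in even degrees and $\chi>0$; even this counting statement is delicate (the argument in the Caldero--Reineke paper cited by the survey was later seen to be incomplete and was completed by Qin and by Efimov), and Nakajima's alternative proof uses graded quiver varieties rather than any cell decomposition. As written, your proposal therefore leaves the decisive inequality unproved.
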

\begin{theorem}
  For an acyclic cluster algebra there is a unique way of replacing a cluster
variable in a cluster by another cluster variable to obtain a cluster.
\end{theorem}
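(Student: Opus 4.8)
The plan is to transport the entire problem from the cluster algebra into the cluster category $\CC$ by means of the bijection $\psi$ of Theorem~4.5, where the analogous uniqueness statement is already available. Fix a cluster $\ul{x}' = \{y_1, \ldots, y_n\}$ and a cluster variable $y_i$ occurring in it. By Theorem~4.5 the map $\psi$ is a bijection from indecomposable rigid objects to cluster variables which takes cluster-tilting objects to clusters; applying $\psi^{-1}$ carries $\ul{x}'$ to a cluster-tilting object $T = T_1 \oplus \cdots \oplus T_n$ with $\psi(T_j) = y_j$, and deleting $y_i$ corresponds to the almost complete cluster-tilting object $\ol{T} = T/T_i$. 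The goal is then to count the complements of $\ol{T}$.

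First I would establish a dictionary between \emph{admissible replacements for $y_i$} and \emph{complements of $\ol{T}$}. If $Y$ is an indecomposable rigid object with $\ol{T} \oplus Y$ cluster-tilting, then $\psi(\ol{T} \oplus Y)$ is a cluster containing $\psi(\ol{T}) = \ul{x}' \setminus \{y_i\}$, so $\psi(Y)$ completes this $(n-1)$-subset to a cluster. Conversely, any cluster of the form $(\ul{x}' \setminus \{y_i\}) \cup \{y\}$ pulls back under $\psi$ to a cluster-tilting object containing all summands of $\ol{T}$, and such an object is necessarily of the form $\ol{T} \oplus Y$ with $Y$ a complement. Since $\psi$ is a bijection, this yields a bijection between the cluster variables $y$ such that $(\ul{x}' \setminus \{y_i\}) \cup \{y\}$ is a cluster and the complements of $\ol{T}$ in $\CC$.

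Then I would invoke Theorem~2.7: the almost complete cluster-tilting object $\ol{T}$ has exactly two nonisomorphic complements in $\CC$. One of these is $T_i$ itself, which corresponds under the dictionary to the original variable $y_i$; the remaining complement $T_i^*$ is unique and satisfies $\psi(T_i^*) \neq y_i$ by injectivity of $\psi$ together with $T_i^* \not\simeq T_i$. Hence $(\ul{x}' \setminus \{y_i\}) \cup \{\psi(T_i^*)\}$ is the one and only cluster obtained by replacing $y_i$ with a different cluster variable, which proves existence and uniqueness simultaneously.

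The main obstacle is making the dictionary of the second paragraph rigorous, i.e.\ checking that $\psi$ matches the property of \emph{being a cluster} in both directions, so that complements of $\ol{T}$ correspond \emph{exactly} to the clusters containing $\ul{x}' \setminus \{y_i\}$, with no spurious clusters arising elsewhere in the cluster graph. This is precisely the force of Theorem~4.5, combined with the elementary fact that a cluster-tilting object containing the summands of $\ol{T}$ is automatically a completion $\ol{T} \oplus Y$; the acyclicity hypothesis enters only through the validity of Theorem~4.5 in this generality.
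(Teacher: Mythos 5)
Your proposal is correct and is essentially the argument the paper has in mind: Theorem~4.7 is presented there without proof, positioned as a consequence of the bijection $\psi$ of Theorem~4.5 together with the fact that an almost complete cluster-tilting object has exactly two complements (Theorem~2.7), which is exactly the structure of your argument. One precision worth noting: the converse direction of your dictionary --- that every cluster containing $\ul{x}'\setminus\{y_i\}$ pulls back under $\psi$ to a cluster-tilting object --- is not the literal content of Theorem~4.5 (which only states that cluster-tilting objects are sent to clusters); within the paper this direction is supplied by Theorem~4.2(b), which says that the mutation-defined map $\varphi$ (inverse to $\psi$ via the denominator description of Theorem~4.2(a)) takes clusters to cluster-tilting objects, and citing that result would close the gap you flag in your final paragraph.
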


Note that Theorem 4.7  has been proved in a more general setting in \cite{gsv}.
 
There are additional results, so far only proved for finite representation type.
\begin{theorem}
  For any acyclic cluster algebra of finite type, the image under $\psi$ of the
rigid objects in the cluster category give  a $\mathbb{Z}$-basis for the cluster
algebra.
\end{theorem}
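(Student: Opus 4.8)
The plan is to show that, under $\psi$, the rigid objects correspond precisely to the \emph{cluster monomials} (products of cluster variables lying in a common cluster), and then to prove that the cluster monomials form a $\Z$-basis by separately establishing that they span and that they are linearly independent. First I would identify the image: since $\psi$ is multiplicative on direct sums (the Caldero--Chapoton map sends $M\oplus N$ to $\psi(M)\psi(N)$), and since a rigid object is a direct sum $\bigoplus_j X_j^{a_j}$ of pairwise $\Ext^1_{\CC}$-orthogonal indecomposable rigid objects, each rigid object is a partial cluster-tilting object; in the finite type case it can be completed to a cluster-tilting object $T$. Thus the $X_j$ are summands of a common $T$, and by the bijection $\psi$ established above they map to cluster variables lying in a common cluster. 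Hence $\psi\bigl(\bigoplus_j X_j^{a_j}\bigr)=\prod_j \psi(X_j)^{a_j}$ is a cluster monomial, and conversely every cluster monomial arises this way. So the statement reduces to the claim that the cluster monomials form a $\Z$-basis of $C(Q)$.

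For spanning I would use that $C(Q)$ is generated as a ring by the cluster variables, so it is spanned by arbitrary monomials in them, and then straighten such monomials to cluster monomials using the exchange relations. Concretely, whenever a monomial contains a factor $x_M x_{M^*}$ with $(M,M^*)$ an exchange pair, the exchange triangles $M^*\to B\to M$ and $M\to B'\to M^*$ yield, after applying $\psi$, the relation $x_M x_{M^*}=\psi(B)+\psi(B')$; since $B,B'\in\add\ol{T}$ for the common almost complete cluster-tilting object $\ol{T}$, both $\psi(B)$ and $\psi(B')$ are cluster monomials. Replacing $x_M x_{M^*}$ in this way trades one incompatible pair for terms built from more compatible variables. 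The task is then to equip the set of monomials with a suitable well-founded measure of total incompatibility that strictly decreases under each such replacement, so that the rewriting terminates and every monomial becomes a $\Z$-combination of cluster monomials.

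For linear independence I would use the denominator vectors. To each cluster monomial $c=\prod_j x_{M_j}^{a_j}$ attach the vector $d(c)=\sum_j a_j\,\underline{\dim}\,M_j\in\Z^n$, reading $-e_i$ for the initial variables. By the Lemma on the positivity condition each numerator $f_{M_j}$ has nonzero constant term, so when $c$ is expanded as a Laurent polynomial in $x_1,\dots,x_n$ the denominators add without cancellation and $c$ contains the monomial $\prod_i x_i^{-d(c)_i}$ with nonzero coefficient, this being its unique extreme term. It then suffices to show that $d$ is injective on cluster monomials: within a single cluster the relevant dimension vectors are linearly independent, so monomials correspond bijectively to their exponent vectors and hence to their $d$-vectors; across clusters one invokes that the cones spanned by the $d$-vectors of the clusters form a complete simplicial unimodular fan, the cluster fan, so that each lattice point lies in a unique cone and thus has a unique cluster-monomial preimage. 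Distinct cluster monomials then have distinct extreme Laurent monomials, which gives independence.

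The hard part will be the injectivity of $d$ across different clusters in the independence argument: it rests on the completeness and face-to-face simplicial structure of the cluster fan, which is exactly where the finite type hypothesis enters and which must be imported from the Fomin--Zelevinsky classification of finite type cluster algebras and their generalized associahedra. The termination of the straightening process in the spanning step is a secondary, more combinatorial obstacle, requiring a careful choice of the well-founded measure (for instance one built from pairwise compatibility degrees) for which each exchange step is strictly decreasing.
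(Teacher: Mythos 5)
The paper itself states this theorem without proof; it is survey material, credited in the Notes to Caldero--Chapoton and Caldero--Keller \cite{cc}\cite{ck1}\cite{ck2}. So your proposal can only be measured against that literature, and in outline your architecture is the standard one: identify the image of the rigid objects as the cluster monomials, then prove spanning and linear independence separately, with independence resting on denominator vectors and the Fomin--Zelevinsky cluster fan. However, your spanning step has a genuine gap. Your straightening rule applies only to exchange pairs, and by Theorem 2.10 these are exactly the pairs of indecomposable rigid objects with $\dim_k\Ext^1_{\CC}(M,M^*)=1$; only for such pairs do the exchange triangles give $x_Mx_{M^*}=\psi(B)+\psi(B')$. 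Outside type $A_n$ there are incompatible pairs of indecomposable rigid objects which are \emph{not} exchange pairs, and on a monomial containing such a pair your rewriting has no rule to apply, so the reduction halts before reaching the span of the cluster monomials. Concretely, in type $D_4$ with all three arrows pointing into the central vertex $3$, take $S_3=P_3$ and the injective $I_3$ with dimension vector $(1,1,1,1)$: then $\Hom_H(I_3,S_3)=0$ while the Euler form gives $\langle\dim I_3,\dim S_3\rangle=1-3=-2$, so $\dim_k\Ext^1_H(I_3,S_3)=2$; since $S_3$ is projective, $\Ext^1_H(S_3,I_3)=0$, and Theorem 2.4(d) yields $\dim_k\Ext^1_{\CC}(I_3,S_3)=2$. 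Thus $\psi(I_3)\psi(S_3)$ is not a cluster monomial, yet it cannot be straightened by your rule. Repairing this is precisely where the ``more advanced techniques'' of Section 4.4 enter: one needs the Caldero--Keller multiplication formula, which for an arbitrary incompatible pair expresses $\dim_k\Ext^1_{\CC}(M,N)\cdot\psi(M)\psi(N)$ as an integer combination of $\psi(Y)$ over all middle terms $Y$ of non-split triangles in both directions, together with an integrality and termination argument for the resulting relations --- considerably more than the exchange relation alone.

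Two further, smaller issues. You leave the well-founded termination measure unspecified, which you acknowledge, but it is not routine even when every incompatible pair is an exchange pair. And in the independence step, the positivity condition does not imply that the numerator has nonzero constant term (for instance $f=x_1+x_2$ satisfies the condition), so your claim that the Laurent expansion of a cluster monomial contains $\prod_ix_i^{-d(c)_i}$ with nonzero coefficient is not justified as stated. What Lemma 4.3 actually gives is that a product of cluster variables, each in reduced form, remains in reduced form, hence that denominator vectors of cluster monomials add; converting injectivity of the resulting $d$-vector map (which, as you say, requires the finite-type cluster fan) into linear independence of the corresponding Laurent polynomials still needs an ordering or leading-term device that your sketch does not supply.
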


\bigskip
\noindent {\bf Notes:} Secton 4.1 is taken from \cite{br2}\cite{s2} (see also \cite{fst}\cite{do}\cite{t}), Section 4.2 from \cite{brs}. For Sections 4.3 and 4.4
see \cite{bmr2}, \cite{bmrt} (with appendix), \cite{cc}\cite{ck1},\cite{ck2}\cite{cr} (see also \cite{bmr4}\cite{bm2}).

\bigskip

\section{2-Calabi Yau categories}

Many of the results on cluster categories and cluster-tilted algebras have a natural
generalization to the more general class of $\Hom$-finite triangulated 2-CY categories
over $k$, with an appropriate choice of special objects and associated algebras. In this
section we give a brief account of this development.

\bigskip

\subsection{Connection with almost split sequences/triangles}
\label{sec_5-1}



 Recall that a $\Hom$-finite triangulated $k$-category $\C$ is \textbf{2-CY}
 if and only if there exists a functorial isomorphism $D\Ext^1_\C(A,B) \simeq \Ext^1_\C(B,A)$ for $A$ and
 $B$ in $\C$. Since the symmetry property for $\Ext^1_\C(\  ,\  )$ plays
 a crucial role in the investigation of cluster catgories, in
 particular the fact that $\Ext^1_\C(A,B)=0$ if and only if
 $\Ext^1_\C(B,A)=0$, it is natural to look for generalizations to
 2-CY categories.

We have that $\cc$ is 2-CY if
and only if $D\Ext^1_{\cc}(A,B)\simeq\Hom_{\C}(B,A[1])$.
 The last formula shows the close connection with $\C$ having almost
 split triangles, and in fact $\C$ being 2-CY is equivalent to $\C$
 having almost split triangles with the corresponding translate
$\tau$ being isomorphic to $[1]$ (see \citeRV).

 The original formulas from which existence of almost split
 sequences was deduced, were valid for module categories or
 subcategories of module categories. In some cases there is however
 a direct reformulation in closely associated triangulated
 categories. For example, let $\Lambda$ be a finite
 dimensional selfinjective algebra. Then the stable category $\stmodL$ of the
 category $\modL$ of finitely generated $\Lambda$-modules is
 known to be triangulated with shift $[1]=\Omega^{-1}$, the first
 inverse syzygy \cite{ha1}. Then we have the following.
 
 \begin{proposition}
 Let $\Lambda$ be a finite dimensional selfinjective algebra. 
 \begin{enumerate}
 \item[(a)]
 In $\ul{\mod}\Lambda$ we have a functorial isomorphism $\ul{\Hom}(B,C)\simeq D\ul{\Hom}(C,\tau \Omega^{-1}B)$ (that is, the functor $\tau \Omega^{-1}\colon \ul{\mod}\Lambda\to \ul{\mod}\Lambda$ is a Serre functor).
 \item[(b)] 
 $\ul{\mod}\Lambda$ is $2$-CY if and only if  $\tau\simeq\Omega^{-1}$ as functors from $\ul{\mod}\Lambda$ to $\ul{\mod}\Lambda$.
 \end{enumerate}
 \end{proposition}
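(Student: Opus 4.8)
The plan for (a) is to recognize the composite $\tau\Omega^{-1}$ as a \emph{Serre functor} on the triangulated category $\ul{\mod}\Lambda$ (which is $\Hom$-finite since $\Lambda$ is finite dimensional, and whose suspension is $[1]=\Omega^{-1}$): the asserted isomorphism $\ul{\Hom}(B,C)\simeq D\ul{\Hom}(C,\tau\Omega^{-1}B)$ is exactly the defining property of a Serre functor $S=\tau\Omega^{-1}$. I would assemble it from two standard inputs. First, the projective-cover sequence $0\to\Omega X\to P\to X\to 0$ yields, for selfinjective $\Lambda$, the isomorphism $\Ext^1_\Lambda(X,Y)\simeq\ul{\Hom}(X,\Omega^{-1}Y)=\ul{\Hom}(X,Y[1])$. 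Second, the Auslander--Reiten duality $\Ext^1_\Lambda(X,Y)\simeq D\ol{\Hom}(Y,\tau X)$, combined with the fact that for a selfinjective algebra projectives coincide with injectives (so the costable $\ol{\Hom}$ equals the stable $\ul{\Hom}$), gives $\Ext^1_\Lambda(X,Y)\simeq D\ul{\Hom}(Y,\tau X)$.

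Chaining these produces (a): writing $C\simeq\Omega^{-1}(\Omega C)$, one has $\ul{\Hom}(B,C)\simeq\Ext^1_\Lambda(B,\Omega C)\simeq D\ul{\Hom}(\Omega C,\tau B)\simeq D\ul{\Hom}(C,\Omega^{-1}\tau B)$, where the last step applies the autoequivalence $\Omega^{-1}$. It then remains to commute $\Omega^{-1}$ past $\tau$, i.e. to check $\Omega^{-1}\tau\simeq\tau\Omega^{-1}$. This I would extract from $\tau\simeq\nu\Omega^2$ on $\ul{\mod}\Lambda$, where the Nakayama functor $\nu$ is an exact autoequivalence carrying projectives to projectives and hence commuting with $\Omega$; it follows that $\tau$ commutes with $\Omega$ as well. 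Alternatively, one can quote Reiten--Van den Bergh \citeRV directly: the almost split sequences of $\mod\Lambda$ descend to almost split triangles in $\ul{\mod}\Lambda$, so $\ul{\mod}\Lambda$ has a Serre functor $S$ whose AR-translate is $\tau\simeq S[-1]$, giving $S\simeq\tau[1]=\tau\Omega^{-1}$ in one stroke (and then the commutation of $\tau$ with $[1]$ is automatic, since a Serre functor commutes with the suspension).

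For (b) I would feed (a) into the definition of $2$-CY, namely the functorial isomorphism $D\Ext^1_{\ul{\mod}\Lambda}(A,B)\simeq\Ext^1_{\ul{\mod}\Lambda}(B,A)$, that is $\ul{\Hom}(B,A[1])\simeq D\ul{\Hom}(A,B[1])$. Specializing the Serre formula of (a) at $C=A[1]$ and cancelling one suspension (using $\tau[1]\simeq[1]\tau$) gives the \emph{unconditional} identity $\ul{\Hom}(B,A[1])\simeq D\ul{\Hom}(A,\tau B)$. Comparing this with the $2$-CY condition, the category is $2$-CY if and only if $D\ul{\Hom}(A,\tau B)\simeq D\ul{\Hom}(A,B[1])$ naturally in $A$ and $B$; removing $D$ and invoking the Yoneda lemma in $A$, this holds exactly when $\tau B\simeq B[1]$ functorially, i.e. $\tau\simeq[1]=\Omega^{-1}$. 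Both implications fall out of this single equivalence.

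The main obstacle is not any individual computation but the bookkeeping of \emph{naturality}: the Serre and AR isomorphisms must be verified functorial in both variables, since the final step of (b) is a Yoneda argument that collapses only if the isomorphism $\ul{\Hom}(A,\tau B)\simeq\ul{\Hom}(A,B[1])$ is natural in $A$. A secondary technical point is the commutation $\tau\Omega\simeq\Omega\tau$ on the stable category; this is clean via the Nakayama description or, more conceptually, via \citeRV, where it is forced by the general fact that Serre functors commute with all autoequivalences.
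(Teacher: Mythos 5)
Your proposal is correct and follows essentially the same route as the paper: both prove (a) by chaining the Auslander--Reiten duality formula with the selfinjective identification $\Ext^1_\Lambda(X,Y)\simeq\ul{\Hom}(X,\Omega^{-1}Y)$ and the fact that $\tau$ and $\Omega^{-1}$ are autoequivalences of $\ul{\mod}\Lambda$, and both deduce (b) by comparing the resulting Serre functor $\tau\Omega^{-1}$ with $\Omega^{-2}$ (your Yoneda argument being the standard uniqueness-of-Serre-functor argument the paper uses implicitly). The only divergence is bookkeeping: the paper starts from $\ul{\Hom}(B,C)\simeq D\Ext^1(\tau^{-1}C,B)$ and applies $\tau$ to both arguments in the last step, which sidesteps the commutation $\tau\Omega^{-1}\simeq\Omega^{-1}\tau$ that your ordering forces you to verify --- a point you correctly dispatch via $\tau\simeq\nu\Omega^2$ with $\nu$ exact and preserving projectives.
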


 \begin{proof}
 $(a)$ We have $$\ul{\Hom}(B,C)\simeq D\Ext^1(\tau^{-1}C,B)\simeq D\ul{\Hom}(\tau^{-1}C,\Omega^{-1}B)\simeq  D\ul{\Hom}(C,\tau \Omega^{-1}B).$$
 Here the first isomorphism is the formula on which the existence of almost split sequences is based, and the last two follow directly for selfinjective algebras. 
 
 $(b)$ It follows from $(a)$ that $\ul{\mod}\Lambda$ is $2$-CY if and only if  $\tau\Omega^{-1}\simeq \Omega^{-2}$, if and only if $\tau\simeq\Omega^{-1}$. 
 \end{proof}
 
 Also for a commutative complete local isolated Gorenstein singularity $R$ we have a similar result, for the same reason, since we have a corresponding formula for the category $CM(R)$ of maximal Cohen-Macaulay $R$-modules \cite{au}, and $\ul{CM}(R)$ is triangulated since $CM(R)$ is a Frobenius category.
 
 \begin{proposition}
 Let the notation be as above, with $\dim R=d$.
 \begin{enumerate}
 \item[(a)] We have a functorial isomorphism $$\ul{\Hom}(B,C)\simeq D\ul{\Hom}(C,\tau\Omega^{-1}B)\simeq D\ul{\Hom}(C,\Omega^{1-d}B).$$
 \item[(b)] $\ul{CM}(R)$ is $(d-1)$-CY, in particular $2$-CY if $d=3$. 
 \end{enumerate}
 \end{proposition}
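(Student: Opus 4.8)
The plan is to mirror the proof of the preceding proposition, replacing the selfinjective input by the corresponding facts for maximal Cohen--Macaulay modules over the Gorenstein isolated singularity $R$. First I would record the two structural inputs. Since $R$ is Gorenstein, $CM(R)$ is a Frobenius exact category whose projective--injective objects are exactly the free modules, so $\ul{CM}(R)$ is triangulated with $[1]=\Omega^{-1}$, as noted in the text. Since $R$ is moreover an isolated singularity, $CM(R)$ has almost split sequences by Auslander \cite{au}, so $\ul{CM}(R)$ carries an Auslander--Reiten translate $\tau$ which is an autoequivalence, and $\Hom$ and $\Ext^1$ between MCM modules have finite length, so $D$ may be taken to be the $k$-dual on them.

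For the first isomorphism in $(a)$ I would run the very same chain as in the selfinjective case:
$$\ul{\Hom}(B,C)\simeq D\Ext^1(\tau^{-1}C,B)\simeq D\ul{\Hom}(\tau^{-1}C,\Omega^{-1}B)\simeq D\ul{\Hom}(C,\tau\Omega^{-1}B).$$
The first isomorphism is the Auslander--Reiten formula (available because $CM(R)$ has almost split sequences), the second is the standard identification $\Ext^1(X,Y)\simeq\ul{\Hom}(X,\Omega^{-1}Y)$ valid in any Frobenius category, and the third uses that $\tau$ is an autoequivalence of $\ul{CM}(R)$. This already exhibits $\tau\Omega^{-1}=\tau[1]$ as a Serre functor on $\ul{CM}(R)$, exactly as the Serre functor was identified in part $(a)$ of the previous proposition.

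The heart of the matter, and the step I expect to be the main obstacle, is the second isomorphism, which amounts to the formula $\tau\simeq\Omega^{2-d}$ on $\ul{CM}(R)$, equivalently $\tau\Omega^{-1}\simeq\Omega^{1-d}$. This is Auslander's computation of the Cohen--Macaulay translate \cite{au}. I would derive it from two ingredients. First, dualizing a projective presentation $P_1\to P_0\to M\to 0$ by $(-)^{*}=\Hom_R(-,R)$, which is exact on $CM(R)$ because $\Ext^{i}_R(M,R)=0$ for $i>0$ when $M$ is MCM over a Gorenstein ring, yields the four-term exact sequence $0\to M^{*}\to P_0^{*}\to P_1^{*}\to\operatorname{Tr} M\to 0$ and hence the stable relation $\operatorname{Tr} M\simeq\Omega^{-2}M^{*}$. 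Second, over a $d$-dimensional Gorenstein ring (where the canonical module is $\omega\simeq R$) local duality converts this transpose into the $d$-fold cosyzygy shift defining the translate in the MCM category. Pinning down the exponent is the delicate point; the small cases $d=0$ (commutative Artinian Gorenstein, where $\tau\simeq\Omega^{2}$) and $d=1$ (where $\tau\simeq\Omega$) serve as consistency checks.

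Granting $\tau\Omega^{-1}\simeq\Omega^{1-d}$, part $(a)$ follows by substituting into the chain above. Part $(b)$ is then immediate: the Serre functor equals $\Omega^{1-d}=[d-1]$, so applying $D$ to the isomorphism of $(a)$ gives $D\ul{\Hom}(A,C)\simeq\ul{\Hom}(C,A[d-1])$, which is precisely the statement that $\ul{CM}(R)$ is $(d-1)$-CY; specializing to $d=3$ yields the $2$-CY conclusion.
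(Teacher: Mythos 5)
Your proposal is correct and takes essentially the same approach as the paper: the paper proves this proposition ``for the same reason'' as the selfinjective case, i.e.\ by running the identical chain of isomorphisms in $\uCM(R)$ using Auslander's almost split sequence formula for $\CM(R)$, and it simply cites $\tau\simeq\Omega^{2-d}$ from \cite{au} rather than proving it. Your extra sketch of how to derive that formula (via the transpose and duality over the Gorenstein ring) is a bonus beyond what the paper does, and is not needed for the argument to match the paper's.
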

 
 Note that the formula $\tau\simeq\Omega^{2-d}$ is given in \cite{au}.
\bigskip

\subsection{Cluster-tilting objects}
\label{sec_5-2}

 For cluster categories we have considered  the concepts of maximal rigid objects and cluster-tilting objects, and we have seen that they coincide. In the more general context of triangulated $2$-CY categories this is not the case \cite{bikr}. It turns out that cluster-tilting is the natural condition to use in general since the extra property required here is essential in some of the proofs.
 
 The algebras $\End_\C(T)^{\op}$ for $T$ a cluster-tilting object in a $\Hom$-finite triangulated $2$-CY category $\C$ are called \textit{$2$-CY-tilted algebras}. This class properly contains the class of cluster-tilted algebras.
 
 In a triangulated 2-CY category $\C$ it may happen that there is no cluster-tilting object, actually even no nonzero object $M$ with $\Ext^1_\C(M,M)=0 $. Sometimes there is  instead what is called a cluster-tilting
 subcategory, which may have an infinite number of indecomposable
 objects. Here one requires in addition that the category is
 functorially finite, as done for maximal 1-orthogonal
 subcategories, but which was not required for $\Ext$-configurations
 (see \cite{kr1}, \cite{birsc} for such examples).

\bigskip
\subsection{Analogous results}
\label{sec_5-3}

Much of the general theory in Sections 3 and 4 carries over to the setting of triangulated $2$-CY categories and $2$-CY-tilted algebras. It is however not the case in general that the quivers of the $2$-CY-tilted algebras have no loops and $2$-cycles \cite{bikr}, so in order to get cluster quivers we must exclude this possibility. Excluding this, the $2$-CY category with the cluster-tilting objects turns out to have what is called a \textit{cluster structure} \cite{birsc}, which essentially means that all the essential ingredients for having possible connections with cluster algebras hold. Also note that there is no known analog of the description of cluster-tilted algebras as trivial extensions of tilted algebras. 

\bigskip
\subsection{Preprojective algebras of Dynkin quivers}
\label{sec_5-4}

 Important examples of 2-CY categories are the stable module categories
$\ul{\mod}\la$, where $\la$ is a preprojective
 algebra of a Dynkin quiver over a field $k$. Recall that if for
 example $Q$ is the quiver
 \[ \xymatrix@C0.5cm@R0.5cm{
                    &              & 4 \\
   1 \ar[r]^\alpha  & 2 \ar[ur]^\beta \ar[dr]^\gamma &  \\
                    &              & 3 }
\]
then the preprojective algebra $\Pi(Q)$ is given by the quiver
 \[ \xymatrix@C0.5cm@R0.5cm{
                    &              & 4 \ar@<0.5ex>[dl]^{\beta^{+}} \\
     1 \ar@<0.5ex>[r]^\alpha
   & 2 \ar@<0.5ex>[l]^{\alpha^{+}} \ar@<0.5ex>[ur]^\beta
       \ar@<0.5ex>[dr]^\gamma  &  \\
                    &              & 3 \ar@<0.5ex>[ul]^{\gamma^{+}} }
\]
with the relations $\alpha\alpha^{+} - \alpha^{+}\alpha + \beta\beta^{+} - \beta^{+}\beta
+ \gamma\gamma^{+} - \gamma^{+}\gamma=0$.

For the case of Dynkin quivers, the preprojective algebras are known to be finite
dimensional selfinjective, and we have the following.
\begin{proposition} \label{prop_5-3}
 When $Q$ is Dynkin, with associated preprojective algebra
 $\Lambda=\Pi(Q)$, then the stable category \emph{$\stmodL$} is $2$-CY.
\end{proposition}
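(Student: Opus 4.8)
The plan is to apply the Proposition on selfinjective algebras from Section 5.1 and thereby reduce the $2$-CY property to a single functorial identity. As recalled just above, the preprojective algebra $\Lambda=\Pi(Q)$ of a Dynkin quiver is finite dimensional and selfinjective, so part (b) of that Proposition applies directly: $\stmodL$ is $2$-CY if and only if $\tau\simeq\Omega^{-1}$ as functors on $\stmodL$. Hence the entire statement reduces to verifying this one isomorphism of stable functors. Next I would invoke the standard description of the Auslander--Reiten translate on the stable category of a selfinjective algebra, namely $\tau\simeq\nu\Omega^{2}$, where $\nu$ is the Nakayama functor (see \cite{ars}). Under this identification the target relation $\tau\simeq\Omega^{-1}$ becomes $\nu\Omega^{2}\simeq\Omega^{-1}$, that is $\Omega^{3}\simeq\nu^{-1}$. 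Thus everything comes down to a ``third--syzygy periodicity up to the Nakayama twist''.

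The heart of the argument is to produce this periodicity from the bimodule structure of $\Lambda$. Writing $\Lambda_0$ for the semisimple subalgebra spanned by the vertex idempotents, $E$ for the arrow bimodule, and $\sigma$ for the Nakayama automorphism, I would construct the minimal projective resolution of $\Lambda$ over its enveloping algebra $\Lambda^e$,
\[
0\to {}_1\Lambda_\sigma \to \Lambda\otimes_{\Lambda_0}\Lambda \to \Lambda\otimes_{\Lambda_0}E\otimes_{\Lambda_0}\Lambda \to \Lambda\otimes_{\Lambda_0}\Lambda \to \Lambda \to 0,
\]
whose differentials are built from the arrows together with the single defining preprojective relation $\sum(\alpha\alpha^{+}-\alpha^{+}\alpha)=0$. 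This complex is self-dual, which is precisely the algebraic shadow of the Calabi--Yau symmetry, and from it one reads off $\Omega^{3}_{\Lambda^e}(\Lambda)\cong {}_1\Lambda_\sigma$. To transfer this from bimodules to modules I would apply $-\otimes_\Lambda M$ for an arbitrary $M$: each middle term is projective as a right $\Lambda$-module, so tensoring preserves exactness, and since $\Lambda_0$ is semisimple each $\Lambda\otimes_{\Lambda_0}(-)$ becomes a direct sum of indecomposable projective left modules. Consequently $\Omega^{3}M\cong {}_1\Lambda_\sigma\otimes_\Lambda M={}_\sigma M$ stably for every $M$; identifying the twist functor ${}_\sigma(-)$ with $\nu^{-1}$ on $\stmodL$ yields $\Omega^{3}\simeq\nu^{-1}$, and hence $\tau\simeq\nu\Omega^{2}\simeq\Omega^{-1}$, as required.

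The main obstacle is the construction and verification of this self-dual bimodule resolution: exhibiting the maps explicitly, checking exactness (which is exactly where the mesh relation is consumed), and confirming self-duality are genuine computations rather than formalities. The subtlest point is the identification of the top term with the Nakayama-twisted bimodule ${}_1\Lambda_\sigma$, since it is this twist that encodes the discrepancy between $\Lambda$ being merely selfinjective rather than symmetric and that ultimately matches $\Omega^{-3}$ with $\nu$ to give the clean relation $\tau\simeq\Omega^{-1}$.
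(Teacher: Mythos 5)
Your argument is correct, but after the shared first step it follows a genuinely different route from the paper's. Both proofs begin identically: $\Pi(Q)$ is selfinjective, so by Proposition 5.1(b) it suffices to prove $\tau\simeq\Omega^{-1}$ on $\stmodL$, and both ultimately pivot on the equivalent identity $\nu\simeq\Omega^{-3}$ (equivalent since $\tau\simeq\nu\Omega^{2}$ stably). The paper obtains this identity from Auslander--Reiten's work on Cohen--Macaulay modules: $\Pi(Q)$ is realized as the stable Auslander algebra $\underline{\End}_R(M)^{\op}$ of a two-dimensional simple hypersurface singularity $R$ of finite CM type, and the computation is carried out in the functor category $\mod(\uCM(R))$, using $\tau_R=\mathrm{id}$, $\Omega_R^{2}\simeq\mathrm{id}$, and the fact that $\Omega_R^{-1}$ induces a functor on $\mod(\uCM(R))$ isomorphic to $\Omega^{-3}$; the hard content is thus quoted from \citeAR. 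You instead work entirely inside $\Lambda$ itself: the twisted third-syzygy periodicity $\Omega^{3}_{\Lambda^{e}}(\Lambda)\cong{}_{1}\Lambda_{\sigma}$ over the enveloping algebra, transferred to $\stmodL$ by tensoring (legitimate exactly as you say, because the terms of the bimodule resolution are projective on each side and $\Lambda_{0}$ is semisimple), gives $\Omega^{3}\simeq\nu^{-1}$ once $\sigma$ is identified with the (inverse of the) Nakayama automorphism. The paper's route buys an essentially citation-based proof plus the conceptual link to Kleinian singularities; yours buys a self-contained, purely ring-theoretic argument that yields the stronger Hochschild-level statement, which is of independent interest. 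Be aware, though, that the two facts you flag as computations --- exactness of the self-dual complex (this is precisely where Dynkinness enters: for non-Dynkin quivers the same complex is exact with $0$ in place of ${}_{1}\Lambda_{\sigma}$) and, crucially, the identification of the twist with the Nakayama automorphism rather than some unrelated automorphism --- together constitute a substantial theorem (due to Brenner--Butler--King and Erdmann--Snashall) and not a routine verification; without the second identification you would only obtain periodicity of $\Omega$ up to twist, which is strictly weaker than the functorial statement $\nu\simeq\Omega^{-3}$ needed for the $2$-CY property.
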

\begin{proof}
The algebra $\Lambda$ is known to be selfinjective.
    In view of Proposition 5.1, we only need to see that
    $\tau\simeq\Omega^{-1}$. This follows from \citeAR. We here give an
    outline of the proof, specialized to the case of interest here.
    The proof is based on some facts about the category $\CM(R)$ of
    maximal Cohen-Macaulay modules over two-dimensional simple
    hypersurface singularities $R$, which are of finite
    Cohen-Macaulay type and correspond to Dynkin diagrams. We have
    that $\tau_R$ is the identity, and $\Omega^2 \simeq id$ on the
    stable category $\uCM(R)$, and we have the formula $D\Ext^1_R(A,B)\simeq \ul{\Hom}_R(B,\tau
    A)$ \cite{au}. In addition, if $M$ is the direct sum of one copy of
    each indecomposable object in $\uCM(R)$ up to isomorphism, then
    $\Gamma = \underline{\End}{_R}(M)^{\op}$ is isomorphic to $\Pi(Q)$, 
    where the underlying graph of $Q$ is the Dynkin diagram
    corresponding to $R$. We view the category $\C=\modG$ as
    $\mod( \uCM(R) )$, the category of finitely presented
    contravariant functors from $\uCM(R)$ to $\mod k$. Denote by
    $\nu_\C = D(\ul{\Hom}_R(\ ,C)^*)$ the Nakayama
    functor, where $C$ is in $\uCM(R)$. Then we have $\tau_\C = \Omega^2_\C
    \nu_\C$. Since $\Gamma$ is selfinjective, we have $D\ul{\Hom}_R(\phantom{-},C)\simeq
    D\Ext^1_R(\Omega^1_R C,\phantom{-} )$, which is isomorphic to $\uHom_R(\Omega^1_R
    C,\phantom{-})$ since $\tau_R = id$. Since
    $\ul{\Hom}_R(Y,\phantom{-})^*=\ul{\Hom}_R(\phantom{-},Y)$ for
    $Y$ in $\CM(R)$, we have $\nu^{-1}_{\C}\ul{\Hom}_R(\phantom{-},C) =
(D\ul{\Hom}_R(\phantom{-},C))^* \simeq \ul{\Hom}_R(\phantom{-}, \Omega^1_R  C)$, hence
$\nu_{\C}\ul{\Hom}_R(\phantom{-},C)=\ul{\Hom}_R(\phantom{-},\Omega^{-1}_RC)$, for $C$ in $\CM(R)$. One can show that $\Omega^{-1}_R \colon \uCM(R) \to
    \uCM(R)$ induces in a natural way a functor $\alpha$ from $\C$
    to $\C$, hence from $\underline{\C}$ to $\underline{\C}$, which
    is isomorphic to $\Omega^{-3}_\C$. It follows that $\tau_\C = \Omega^2_\C
    \nu_\C$ is isomorphic to $\Omega^{-1}_\C$ as functors  from $\underline{\C}$ to
    $\underline{\C}$.
\end{proof}

This case of preprojective algebras has been  investigated extensively in a series of
papers by  Geiss-Leclerc-Schr\"oer. They work in the category $\modL$, rather that in
the 2-CY category $\stmodL$, but the categories $\modL$ and $\stmodL$ are closely
related, and one can go back and forth between exact sequences and triangles.

 As for cluster categories, the concepts of cluster-tilting (maximal 1-orthogonal) and maximal rigid coincide. And
 also one has that the associated 2-CY tilted algebras have no loops
 or 2-cycles in their quiver. In this case there are many
 interesting connections with cluster algebras and Lusztig's dual semicanonical basis
  \cite{gls2}, \cite{gls1}.

\bigskip
\subsection{Further examples}
\label{sec_5-5}

 We have already indicated that examples of 2-CY
categories may be found amongst the stable categories $\uCM(R)$, where $R$ is a complete
local commutative noetherian isolated Gorenstein singularity. In view of
Proposition 5.1, all we have to check is that we have an isomorphism of functors $\tau \simeq\Omega^{-1}$ from $\uCM(R)$ to $\uCM(R)$. As we have seen it is
known from the work of Auslander \cite{au}\ that if $d=\dim R$, then $\tau\simeq
\Omega^{2-d}$, hence $\uCM(R)$ is $2$-CY if $d=3$.

A concrete example is the following: Let $S= k[[X,Y,Z]]$ and let $G$ be the subgroup
$\left\langle\left( \begin{smallmatrix} \xi & 0 & 0 \\ 0 & \xi & 0 \\
0 & 0 & \xi \end{smallmatrix} \right) \right\rangle$ of the special linear group
$\operatorname{SL}(3,k)$ where $\xi$ is a primitive third root of 1. Then the invariant
ring $R = S^G$ is a 3-dimensional ring with the desired properties, so that $\uCM(R)$ is
$2$-CY.

There is a large class of 2-CY categories associated with preprojective algebras $\Lambda$ of
quivers which are not Dynkin \cite{birsc} (see also \cite{gls3}). They arise from taking stable categories of
appropriate subcategories of $\modL$. They contain both the cluster categories and
the stable categories $\stmodL$ where $\Lambda$ is the preprojective algebra of a Dynkin
quiver as special cases.

\bigskip
\subsection{Recognizing cluster categories}
\label{sec_5-6}

 A natural question is whether one can tell from a 2-CY-tilted
 algebra which 2-CY category it came from. In particular, we can
 tell when it comes from a cluster category under some mild assumptions. A triangulated $2$-CY category is \textit{algebraic} if it is the stable category  $\ul{\mathcal{E}}$ of an exact Frobenius category $\mathcal{E}$.
 \begin{theorem} \label{thm_5-4}
  Let $\Gamma$ be a $2$-CY-tilted algebra coming from an algebraic $2$-CY category, whose quiver $Q$ has no
  oriented cycles. Then $\Gamma$ comes from a cluster category
  $\C_Q$, and is hence cluster-tilted. 
 \end{theorem}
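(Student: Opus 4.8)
The plan is to reduce first to the case where $\Gamma$ is hereditary, and then to invoke the recognition machinery for algebraic $2$-CY categories. For the reduction I would use the $2$-CY-tilted versions of Theorem~\ref{3.9} and Theorem~\ref{3.11}, valid in the present generality by the remarks in Section~5.3 (see \cite{kr1}): $\Gamma$ is Gorenstein of dimension at most one and $\stSubG$ is $3$-CY. Exactly as in Proposition~\ref{3.4}, these yield the inequality $\dim\Ext^1_\Gamma(S_i,S_j)\ge\dim\Ext^2_\Gamma(S_j,S_i)$ for simple $\Gamma$-modules. Now $\dim\Ext^2_\Gamma(S_j,S_i)$ is the number of minimal relations from $j$ to $i$, each such relation being a path of length at least two from $j$ to $i$. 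If some $\Ext^2_\Gamma(S_j,S_i)$ were nonzero, the inequality would force $\Ext^1_\Gamma(S_i,S_j)\neq 0$, that is, an arrow $i\to j$, and together with the relational path $j\rightsquigarrow i$ this produces an oriented cycle in $Q$. Since $Q$ has no oriented cycles, all these $\Ext^2$-groups vanish, so $\Gamma$ has no relations and $\Gamma\cong kQ=:H$ is hereditary.

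It remains to prove the recognition statement: an algebraic $2$-CY category $\C$ admitting a cluster-tilting object $T$ with hereditary endomorphism algebra $H=\End_\C(T)^{\op}$ is triangle equivalent to the cluster category $\CH$. Writing $\C=\ul{\mathcal{E}}$ for a Frobenius category $\mathcal{E}$, I would fix a differential graded enhancement, lift $T$ to an object $\tilde T$, and form the derived endomorphism algebra $A=\mathrm{RHom}(\tilde T,\tilde T)$. By construction $H^0A\cong H$, while the $2$-CY structure on $\C$ upgrades $A$ to a (completed) $3$-Calabi--Yau dg algebra concentrated in nonpositive degrees with $H^0A=H$. The crucial step is then to identify $A$, up to quasi-isomorphism, with the $3$-Calabi--Yau completion $\Pi=\Pi_3(H)$ of $H$. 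This is where hereditariness is decisive: since $\gdim H\le 1$ the relevant Hochschild cohomology obstructions vanish, so $A$ is formal and quasi-isomorphic to $\Pi$.

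Granting $A\simeq\Pi$, I would conclude by Keller's realization of the cluster category (\cite{k}; see also \cite{kr1}\cite{kr2}): $\CH\simeq\mathrm{per}\,\Pi/\mathbf{D}^b(\Pi)$, with the canonical cluster-tilting object the image of $\Pi$ and endomorphism algebra $H^0\Pi=H$. Because $T$ generates $\C$ as a triangulated category and $\mathrm{RHom}(\tilde T,\tilde T)\simeq\Pi$ controls all morphism spaces, the reconstruction gives a triangle equivalence $\C\xrightarrow{\sim}\CH=\C_Q$ sending $T$ to the canonical object. Under it $\Gamma=\End_\C(T)^{\op}$ is identified with the endomorphism algebra of a cluster-tilting object of $\C_Q$, so $\Gamma$ is cluster-tilted, as desired. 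The main obstacle is exactly the identification $A\simeq\Pi_3(H)$: controlling the $A_\infty$-structure, the Calabi--Yau duality, and the completion at once is the technical heart (this is the content of Keller--Reiten, with the appendix of Van den Bergh), whereas the reduction above and the final transport of structure are comparatively formal.
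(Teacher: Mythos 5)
The paper itself contains no proof of this theorem---it is quoted from Keller--Reiten \cite{kr2} (with Van den Bergh's appendix)---and your outline follows exactly the strategy of that proof: first reduce to the case where $\End_\C(T)$ is hereditary using the Gorenstein and stably $3$-CY properties of $2$-CY-tilted algebras together with the inequality of Proposition~\ref{3.4} (a relation from $j$ to $i$ plus the forced arrow $i\to j$ would create an oriented cycle, so all $\Ext^2$ between simples vanish and $\Gamma\cong kQ$), then identify the completed derived endomorphism dg algebra of a lift of $T$ with the $3$-Calabi--Yau completion of $H$ and conclude by Keller's dg realization of the cluster category. Your reduction step is complete and correct, and the technical heart you isolate---upgrading the categorical Calabi--Yau property to the bimodule one and proving the quasi-isomorphism with $\Pi_3(H)$, where hereditariness of $H$ is decisive---is precisely what \cite{kr2} and its appendix supply, so the proposal is an accurate reconstruction of the argument the paper cites.
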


We illustrate with the example from Section \ref{sec_5-5}. Here $S$ turns out to be a
cluster-tilting object \citeI, and the quiver of $\underline{\End}_R(S)^{\op}$ turns out to
be $\xymatrix@1{ \bullet\ar@<1ex>[r] \ar@<-1ex>[r] \ar[r] & \bullet }$. Hence $\uCM(R)$
is equivalent to $\C_{k \left(  \xymatrix@1@C=1pc{\bullet \ar@<1ex>[r]\ar@<-1ex>[r] \ar[r] & \bullet}
\right)} $.

\bigskip
\noindent {\bf Notes.}
 The generalization from cluster categories to 2-CY categories in
 \ref{sec_5-2} and \ref{sec_5-3} is taken from \cite{kr1}, and the recognition
theorem in
 \ref{sec_5-6} is given in \cite{kr2}. For further work see \cite{iy}\cite{birsc}\cite{birsm}\cite{bikr}\cite{dk}\cite{fk}\cite{am1}\cite{am2}\cite{p1}\cite{p2}.

There has also recently been work devoted to higher cluster categories \\${\bf D}^b(H)/\tau^{-1}[d-1]$, and more generally triangulated $d$-Calabi-Yau categories. But we will not discuss these aspects in this chapter.


\begin{thebibliography}{4}

\bibitem[Am1]{am1} Amiot C., \emph{On the structure of triangulated categories with finitely many indecomposables}, Bull. Soc. Math. France, to appear, math. CT/0612141.

\bibitem[Am2]{am2} Amiot C., \emph{Cluster categories for algebras of global dimension 2 and quivers with potentials}, arXiv: 0805.1035.

\bibitem[AHK]{ahk}
Angeleri-H\"ugel L., Happel D., Krause H., \emph{Handbook of Tilting Theory}, London Math.
Soc. Lec. Note Series {\bf 332}, Cambridge Univ. Press, 2007.
  37.
\bibitem[ABS]{abs}
Assem I., Br\"ustle T., Schiffler R., \emph{Cluster-tilted algebras as trivial
extensions}, Bull. London Math. Soc. 40 (2008)151-162.

\bibitem[ASS]{ass}
Assem I., Simson D, Skowro\'nski A., \emph{Elements of the Representation Theory of
Associative  Algebras 1: Techniques of Representation Theory}, LMSST 65, Cambridge Univ.
Press, Cambridge, 2006.

\bibitem[Au]{au}
Auslander M.,
 \emph{Functors and morphisms determined by objects}, Representation
  theory of algebras (Proc. Conf., Temple Univ., Philadelphia, Pa., 1976),
  Dekker, New York, 1978, pp.~1--244. Lecture Notes in Pure Appl. Math., Vol.

\bibitem[APR]{apr}
Auslander M., Platzeck M.I., Reiten I., \emph{Coxeter functors without diagrams}, Trans.
Amer. Math. Soc. {\bf 250} (1979), 1-46.

\bibitem[AR1]{ar1}
Auslander M., Reiten I., \emph{Representation theory of artin algebra III: Almost split
sequences}, Comm. Alg. {\bf 3} (1975) 69-74.

\bibitem[AR2]{ar2} Auslander M., Reiten I., \emph{Applications of contravariantly finite subcategories}, Adv. Math. 86(1991), no.1, 111-152. 

\bibitem[AR3]{ar3}
Auslander M., Reiten I., \emph{DTr-periodic modules and functors}, Repr. theory of
algebras (Cocoyoc 1994) CMS Conf. Proc., vol. 18, Amer. Math. Soc., Providence.



\bibitem[ARS]{ars}
Auslander M., Reiten I,  Smal\o~S. O.,  \emph{Representation theory of artin algebras},
Cambridge Study of in Adv. Math., Vol. 36, Camb. Univ. Press, New York 1994.
\bibitem[AS]{as}
Auslander M., Smal\o \ S.O., \emph{Preprojective modules over Artin algebras}, J. Algebra
66 (1980), no. 1, 61--122.

\bibitem[BKL]{bkl} Barot M., Kussin D., Lenzing H., \emph{The cluster category of a canonical algebra}, arxiv: 0801.4540v1.

\bibitem[BBH]{bbh}
Beineke A., Br\"ustle T.,  Hille L., \emph{Cluster acyclic quivers with three vertices and
the Markov equation}, arXiv:math.ra/0612213.

\bibitem[BFZ]{bfz}
Berenstein A., Fomin S., Zelevinsky A. \emph{Cluster algebras III. Upper bounds and
double Bruhat cells}, Duke Math. J. 126 (2005), no. 1, 1--52.

\bibitem[BGP]{bgp}
Bernstein I. N.,  Gelfand I.M., Ponomarev, V. A., \emph{Coxeter functors and Gabriel's
theorem}, Russian Math. Surv. {\bf 26} (1973) 17-32.

\bibitem[Bo]{bo}
K. Bongartz, \emph{Algebras and quadratic forms}, J. London Math. Soc. (2) {\bf 28}
(1983), no.3, 461-469.

\bibitem[BB]{bb}
Brenner S., Butler M.R.C., \emph{Generalizations of the Bernstein-Gelfand-Ponomarev theory
of reflection functors, Representation Theory II}, Springer Lect. Notes in Math {\bf 832}
(1980)

\bibitem[BIRSc]{birsc}
Buan A.,  Iyama O., Reiten I., Scott J., \emph{Cluster structures for 2-Calabi-Yau
categories and unipotent groups} arXiv:math. RT/0701557v1, Jan 07.

\bibitem[BIRSm]{birsm} Buan A., Iyama O., Reiten I., Smith D., \emph{Mutation of cluster-tilting objects and potentials}, arXiv:0804.3813v1.


\bibitem[BMRRT]{bmrrt}
Buan A., Marsh R., Reineke M., Reiten I., Todorov G., \emph{Tilting theory and cluster
combinatorics}, Adv. Math. 204 (2006), no. 2, 572--618.

\bibitem[BM1]{bm1}
Buan A., Marsh R., \emph{Cluster-tilting theory}, In Trends in Representation Theory of
Algebras and Related Topics, Workshop August 11-14, 2004, Queretaro, Mexico. Contemp.
Math. 406 (2006), 1--30.

\bibitem[BM2]{bm2}
Buan A., Marsh R., \emph{Denominators in cluster algebras of affine type}, arXiv: 0805.0439.

\bibitem[BMR1]{bmr1}
Buan A., Marsh R., Reiten I., \emph{Cluster-tilted algebras}, Trans. Amer. Math. Soc. 359
(2007), no. 1, 323--332.

\bibitem[BMR2]{bmr2}
Buan A., Marsh R., Reiten I., \emph{Cluster mutation via quiver representations}, Comment. Math. Helv. 83 (2008), no.1, 143-177.

\bibitem[BMR3]{bmr3}
Buan A., Marsh R., Reiten I., \emph{Cluster tilted algebras of finite representation
type}, J. Algebra, Volume 306 (2006), Issue 2, 412--431.

\bibitem[BMR4]{bmr4} Buan A., Marsh R., Reiten I., \emph{Denominators of cluster variables}, arXiv: 0710.4335v1. 

\bibitem[BMRT]{bmrt}
Buan A., Marsh R., Reiten I., Todorov G., \emph{Clusters and seeds for acyclic cluster
algebras} with an appendix by Buan A., Caldero P., Keller B., Marsh R., Reiten I.,
Todorov G., Proc. Amer. Math. Soc. 135(2007),no.10,3049-3060. 

\bibitem[BR1]{br1} Buan A., Reiten I., \emph{From tilted to cluster-tilted algebras}, preprint arxiv: math/0510445.

\bibitem[BR2]{br2}
Buan A., Reiten I., \emph{Acyclic quivers of finite mutation type}, International
Mathematics Research Notices, Article ID 12804 (2006), 1--10.

\bibitem[BRS]{brs}
Buan A., Reiten I., Seven A., \emph{Tame concealed algebras and minimal infinite cluster
algebras}, Journal Pure Appl. Algebra 211(2007),no.1,71-82.

\bibitem[BV]{bv} Buan A., Vatne D., \emph{Derived equivalence classification for cluster-tilted algebras of type $A_n$}, J. Algebra 319(2008)2723-2738.

\bibitem[BIKR]{bikr}
Burban I.,  Iyama O., Keller B.,  Reiten I., \emph{Cluster tilting for one dimensional hypersurface singularities}, Adv. Math. vol 217, no.6 (2008)2443-2484.

\bibitem[CC]{cc}
Caldero P., Chapoton F., \emph{Cluster algebras as Hall algebras of quiver
representations}, Comment. Math. Helv. 81 (2006), no. 3, 595--616.

\bibitem[CCS1]{ccs1}
Caldero P., Chapoton F. , Schiffler R., \emph{Quivers with relations arising from clusters
($A\sb n$ case)}, Trans. Amer. Math. Soc. 358 (2006), no. 3, 1347-1364.

\bibitem[CCS2]{ccs2}
Caldero P., Chapoton F. , Schiffler R., \emph{Quivers with relations and cluster tilted
algebras}, Algebr. Represent. Theory 9 (2006), no. 4, 359-376.

\bibitem[CK1]{ck1}
Caldero P., Keller B. \emph{From triangulated categories to cluster algebras}, Invent. Math. 172(2008)169-211.

\bibitem[CK2]{ck2}
Caldero P., Keller B. \emph{From triangulated categories to cluster algebras II}, Ann. Ecole Norm. Sup. (4)39(2006)no.6, 983-1009.

\bibitem[CR]{cr}
Caldero P., Reineke M., \emph{On the quiver  Grasmannian in the acyclic case}, J. Pure Appl. Algebra (to appear) 

\bibitem[DK]{dk} Dehy R., Keller B., \emph{On the combinatorics of rigid objects in 2-Calabi-Yau categories}, arXiv: 0709.0882.

\bibitem[DO]{do}  Derksen H., Owen T., \emph{New graphs of finite mutation type}, arXiv: 0804.0787v1.

\bibitem[FST]{fst} Fomin S., Shapiro M., Thurston D., \emph{Cluster algebras and triangulated surfaces. Part I: Cluster complexes} Acta Math. (to appear).

\bibitem[FZ1]{fz1}
Fomin S., Zelevinsky A., \emph{Cluster Algebras I: Foundations}, J. Amer. Math. Soc. 15,
no. 2 (2002), 497--529

\bibitem[FZ2]{fz2}
Fomin S., Zelevinsky A., \emph{Cluster algebras II: Finite type classification}, Invent.
Math. \textbf{154}, no. 1 (2003), 63--121.

\bibitem[FZ3]{fz3}
Fomin S., Zelevinsky A. \emph{Cluster algebras: Notes for the CDM-03 conference, current
developments in mathematics 2003}, Int. Press, Somerville, MA, 2003, 1-34.

\bibitem[FK]{fk}
Fu C., Keller B.,\emph{On cluster algebras with coefficients and 2-Calabi-Yau categories}, arXiv: 07103152.

\bibitem[GK]{gk} Geiss C., Keller B., \emph{Quadrangulated categories}, Oberwolfach talk, 2005

\bibitem[GL]{gl}
Geigle W., Lenzing H., \emph{A class of weighted projective curves arising in
representation theory of finite dimensional algebras, in: Singularities, representations
of algebras and vector bundles}, Lecture Notes in Math., Vol. 1274, Springer Verlag 1987,
265-297.

\bibitem[GLS1]{gls1}
Geiss C., Leclerc B., Schr\"oer J. \emph{Rigid modules over preprojective algebras},
Invent. Math. 165 (2006), no. 3, 589--632.

\bibitem[GLS2]{gls2}
Geiss C., Leclerc B., Schr\"oer J. \emph{Semicanonical bases and preprojective algebras
II: A multiplication formula}, Compositio Math.143 (2007) 1313-1334.

\bibitem[GLS3]{gls3}
Geiss C., Leclerc B., Schr\"oer J. \emph{Preprojective algebras and cluster algebras}, arXiv: 0804.4706.

\bibitem[GSV]{gsv} 
Gekhtman M., Shapiro M., Vainshtein A., \emph{On the properties of the exchange graph of a cluster algebra}, arXiv: math/0703151.

\bibitem[Ha1]{ha1}
Happel D. \emph{Triangulated categories in the representation theory of
finite-dimensional algebras}, London Math. Soc. Lecture Note Series, 119.
Cambridge University Press, Cambridge, 1988. x+208 pp

\bibitem[Ha2]{ha2}
Happel D. \emph{A characterization of hereditary categories with tilting object}, Invent.
Math. {\bf 144} (2001), no. 2, 381-398.

\bibitem[HR]{hr}
Happel D.,  Ringel C. M., \emph{Tilted algebras} Trans. Amer. Math. Soc. {\bf 274} no. 2
(1982) 399-443.

\bibitem[HU1]{hu1}
Happel D., Unger L., \emph{Almost complete tilting modules}, Proc. Amer. Math. Soc. {\bf
107}(3), (1989) 603-610.

\bibitem[HU2]{hu2}
Happel D., Unger L., \emph{On the set of tilting objects in hereditary categories}
Algebr. Represent. Theory 8 (2005), no.2, 147-156.

\bibitem[HV]{hv}
Happel D., Vossieck D., \emph{Minimal algebras of infinite representation type with
preprojective component}, Manuscripta Math, {\bf 42} (1983), 221-243.

\bibitem[HJ]{hj}
Holm T., J\o rgensen P., \emph{Stable Calabi-Yau dimension for finite type selfinjective algebras}, arXiv: math/0703488.

\bibitem[Hu]{hu}
Hubery A., \emph{Acyclic cluster algebras via Ringel-Hall algebras}, manuscript.

\bibitem[I1]{i1}
Iyama O., \emph{Higher dimensional Auslander-Reiten theory on maximal orthogonal
subcategories}, Adv. Math. 210 (2007), 22-50.

\bibitem[I2]{i2} 
Iyama O., \emph{Auslander correspondence}, Adv, Math. 210(1)(2007) 51-82.

\bibitem[IR]{ir}
Iyama O., Reiten I., \emph{Fomin-Zelevinsky mutation and tilting modules over Calabi-Yau
algebras}, Amer. J. Math. (to appear).

\bibitem[IY]{iy}
Iyama O., Yoshino Y. \emph{Mutations in triangulated categories and rigid Cohen-Macaulay
modules}, Invent. Math. 172(2008), no.1,117-168.

\bibitem[Kel]{k}
Keller B., \emph{On triangulated orbit categories}, Documenta Math. 10 (2005), 551--581.
(2005).

\bibitem[Ke2]{ke2} 
Keller B., \emph{Categorification of acyclic cluster algebras: an introduction}, arXiv: 0801.3103.

\bibitem[Ke3]{ke3}
Keller B., \emph{Calabi-Yau triangulated categories}, preprint.

\bibitem[KR1]{kr1}
Keller B., Reiten I., \emph{Cluster-tilted algebras are Gorenstein and stably Calabi-Yau},
 Adv. Math., 211 (2007) 123-151.

\bibitem[KR2]{kr2}
Keller B., Reiten I., \emph{Acyclic Calabi-Yau categories are cluster categories; with an appendix by Michel Van Den Bergh}, Compositio Math. (to appear).
 
\bibitem[Ker1]{ker1}
Kerner O., \emph{Representation of wild quivers}, Proc. of the Workshop on Rep. theory of
alg. and related topics, UNAM, Mexico 1994 CMS Conf. Proc. {\bf 19}, 65-107(1996).

\bibitem[Ker2]{ker2}
Kerner O., \emph{Wild cluster tilted algebras of rank 3}, JPAA, (to appear).

\bibitem[KZ]{kz} K\"onig S, Zhu B., \emph{From triangulated categories to
abelian categories - cluster tilting in a general framework}, Math. Z. (to appear).

\bibitem[MRZ]{mrz}
Marsh R., Reineke M., Zelevinsky A., \emph{Generalized associahedra via quiver
representations}, Trans. Amer. Math. Soc. 355 (2003), no. 10, 4171--4186.

\bibitem[P1]{p1} Palu Y., \emph{Cluster characters for 2-Calabi-Yau triangulated categories}, arXiv: math/0703540.

\bibitem[P2]{p2} Palu Y., \emph{Grothendieck groups and generalized mutation rule for 2-Calabi-Yau triangulated categories}, arXiv: math.RT/08033907.

\bibitem[PX]{px}
Peng L., Xiao J., \emph{Triangulated categories and Kac-Moody algebras}, Invent. Math. 140 (2000), no.3, 563-603.

\bibitem[RV]{rv}
Reiten I., Van den Bergh M.,\emph{Noetherian hereditary abelian categories satisfying Serre
duality}, J. Amer. Math. Soc. 15 (2002), no. 2, 295--366.

\bibitem[Rie]{rie}
Riedtmann C., \emph{Algebren, Darstellungsk\"ocher und zur\"uck}, Comment. Math. Helv.
{\bf 85} (2) (1980) 199-224.

\bibitem[RS]{rs}
Riedtmann C.,  Schofield A., \emph{ On open orbits and their components}, J. Algebra {\bf
130}(2) (1990) 388-411.

\bibitem[Rin1]{rin1}
Ringel C. M., \emph{Tame algebras and integral quadratic forms}, Lecture Notes in
Mathematics {\bf 1099} Springer-Verlag 1984.

\bibitem[Rin2]{rin2}
Ringel C.M., \emph{Some Remarks Concerning Tilting Modules and Tilted Algebras Origin.
Relevance. Future. (An appendix to the Handbook of Tilting Theory)}, Cambridge University
Press (2007), LMS Lecture Notes Series 332.

\bibitem[Rin3]{rin3} Ringel C.M., \emph{The selfinjective cluster-tilted algebras}, Archiv der Mathematik (to appear)

\bibitem[S1]{s1}
 Seven A., \emph{Recognizing cluster algebras of finite type}, Electron. J. Combin. 
{\bf 14} (1) (2007).

\bibitem[S2]{s2}
Seven A., \emph{Cluster algebras and semipositive symmetrizable matrices}, arXiv: 08041456v1.

\bibitem[T]{t} Thorkildsen H., \emph{Counting cluster-tilted algebras of type $A_n$}, arXiv: 0801.3762.

\bibitem[U]{u}
Unger L. \emph{The simplicial complex of tilting modules over quiver algebras}, Proc.
Landon Math. Soc. {\bf 73}(3) (1996) 27-46.

\bibitem[Z]{z} Zhu B., \emph{Equivalences between cluster categories}, J. Algebra 304(2006), no.2, 832-850.
\end{thebibliography}
\end{document}